\newtheorem{thm}{Theorem}[section]
\newtheorem{prop}[thm]{Proposition}
\newtheorem{cor}[thm]{Corollary}
\newtheorem{lem}[thm]{Lemma}
\theoremstyle{remark}
\newtheorem{rmk}[thm]{Remark}
\theoremstyle{definition}
\newtheorem{defn}[thm]{Definition}
\begin{document}

\title{Normalizers of Congruence Groups in $SL_{2}(\mathbb{R})$ and Automorphisms of Lattices}

\author{Shaul Zemel}

\maketitle

\section*{Introduction}

If $\Gamma$ is a congruence subgroup of $SL_{2}(\mathbb{Z})$ then there are several reasons to study the normalizer of $\Gamma$ in $SL_{2}(\mathbb{R})$. The main motivation is, except pure interest, that the quotient of that normalizer modulo $\Gamma$ embeds (under some assumptions on elliptic points) into the group of automorphisms of the Riemann surface $X_{\Gamma}$ associated with $\Gamma$. Moreover, elements of this normalizer act on spaces of modular forms with respect to $\Gamma$, endowing these already rich spaces with additional structure.

For the most classical congruence groups, namely $\Gamma_{0}(N)$ for natural $N$, many references describe this normalizer: See, e.g., \cite{[LN]} or Section 3 of \cite{[CN]}. We also mention the earlier work \cite{[N1]}, which shows that the normalizer of that group in $SL_{2}(\mathbb{Z})$ is $\Gamma_{0}\big(\frac{N}{\sigma}\big)$ where $\sigma$ is the largest divisor of 24 whose square divides $N$. Further references investigate the quotient of that normalizer modulo $\Gamma_{0}(N)$. This quotient is a very simple group if the powers of 2 and 3 which divide $N$ are very small, but otherwise it gets significantly more complicated (see \cite{[AL]} for the first results on that quotient, though \cite{[AS]} and \cite{[B]} later corrected some errors in that reference). The aforementioned quotient embeds naturally into the automorphism group of the associated modular curve $X_{0}(N)$. \cite{[KM]} shows, together with the complementary works \cite{[E]} and \cite{[H]}, that the full automorphism group of $X_{0}(N)$ arises in this way, apart for the three cases in which $N$ is 37, 63, or 108. For these three values of $N$ the normalizer of $\Gamma_{0}(N)$ produces a subgroup of index 2 inside the automorphism group. We also mention \cite{[N2]} for some general results on normalizers of congruence subgroups of $SL_{t}(\mathbb{Z})$ inside $SL_{t}(\mathbb{R})$ for any $t\geq2$.

A tool which many of these references use is the Big Picture $\Omega$, first defined in \cite{[C]}, which is a certain graph whose vertices are the finitely generated subgroups of full rank in $\mathbb{Q}^{2}$ modulo homothety, with edges according to an explicit rule. \cite{[L1]} uses it in order to determine the normalizer of the image of $\Gamma_{1}(N)$ inside $PSL_{2}(\mathbb{R})$, and it also appears in the construction of the algorithm, developed in \cite{[L3]}, for determining normalizers of general subgroups (after one finds generators for the subgroup). We mention that \cite{[L2]} is concerned with the normalizers of groups which are slightly larger than $\Gamma_{0}(N)$, and not contained in $SL_{2}(\mathbb{Z})$, and uses it for finding normalizers of certain subgroups of the Hecke groups $G_{4}$ and $G_{6}$. On the other hand, \cite{[KK]} independently obtained the results of \cite{[L1]} using more elementary tools, similar to the ones used in the current paper, and related them to the automorphism group of $X_{1}(N)$ (the latter connection is easy since $\Gamma_{1}(N)$ has no elliptic points for $N\geq5$). The case of the principal congruence subgroups $\Gamma(N)$ was considered in \cite{[BKX]}, which showed, among other things, that the normalizer of $\Gamma(N)$ is $SL_{2}(\mathbb{Z})$ for every $N$, and that if the modular curve $X(N)$ has genus at least 2, then the quotient $SL_{2}(\mathbb{Z}/N\mathbb{Z})$ (divided by $\{\pm I\}$) is its automorphism group. The authors of the latter paper claim, however, that these results were already well-known before.

\smallskip

The aim of this paper is to find the normalizers of various families of congruence groups, which are much more general than just $\Gamma_{0}(N)$ and $\Gamma_{1}(N)$. The main groups we investigate lie between these two families of groups, so that any such group is associated to a unique subgroup $H$ of $\Gamma_{0}(N)/\Gamma_{1}(N)\cong(\mathbb{Z}/N\mathbb{Z})^{\times}$. We begin by introducing a group, which we denote by $\Gamma_{0}^{*,s_{N}}(N)$, containing $\Gamma_{0}(N)$ with finite index, in which all these normalizers are contained, and then give conditions on elements of $\Gamma_{0}^{*,s_{N}}(N)$ which are equivalent to normalizing the subgroup which is associated with $H$. Using these conditions we write the normalizer explicitly for two types of subgroups $H$, namely the kernel of the projection to $(\mathbb{Z}/D\mathbb{Z})^{\times}$ (i.e., the normalizers of the intersection $\Gamma_{0}(N)\cap\Gamma_{1}(D)$) for some divisor $D$ of $N$ and the $m$-torsion subgroups for divisors $m$ of the exponent $\lambda(N)$ of $(\mathbb{Z}/N\mathbb{Z})^{\times}$, together with some additional results.

After the preprint was ready, I was informed about the existence of the paper \cite{[IJK]}, which independently obtained a criterion for normalizing which is similar to ours (compare Theorem 2.1 of that reference with our Corollary \ref{inGammasigK}). However, the goals of that reference are different from ours: While we aim only for determining the normalizers, with no restriction on $N$, \cite{[IJK]} concentrates on square-free $N$ (where finding the normalizers becomes much easier) but focuses on the structure of associated quotient group.

In addition, some lattices of signature $(2,1)$ have discriminant kernels which are (isomorphic to) congruence subgroups---see, e.g., \cite{[LZ]}, or \cite{[BO]} and the references therein. The lattices appearing in these references are related to $\Gamma_{1}(N)$ and $\Gamma_{0}(N)$ respectively, and they are also part of a larger family of lattices $L(N,D)$, whose discriminant kernels turn out to be congruence subgroups as well. The automorphism group of such a lattice is contained in the normalizer of its discriminant kernel, a simple observation which links the two questions to one another. We define these lattices $L(N,D)$, and show how the tools developed for determining normalizers can also be used for finding the automorphism group of $L(N,D)$ and its discriminant kernel.

We remark again that in this paper we study only the normalizers themselves, not the structure of the quotient modulo the congruence group. This is so, since this quotient is complicated in general: See the case of $\Gamma_{0}(N)$ considered in \cite{[AS]} and \cite{[B]}, or the case of $\Gamma_{1}(N)$, where Corollary \ref{Gamma1N0N} shows that this group is an extension of $\{\pm1\}^{\{p|N\}}$ by $(\mathbb{Z}/N\mathbb{Z})^{\times}$. As the action of the former group on the latter is, in general, non-trivial (it is described explicitly in Proposition \ref{ALact}), and the extension is non-trivial as well, we leave the questions about the structure of the quotient for further research.

\smallskip

This paper is divided into 4 sections. In Section \ref{MatGp} we introduce the group $\Gamma_{0}^{*,s_{N}}(N)$ and certain subgroups of it, and prove some of their properties. In Section \ref{DetNorm} we establish the tool for determining the normalizer of any intermediate group between $\Gamma_{1}(N)$ and $\Gamma_{0}(N)$. Section \ref{CongSub} describes the normalizers of several families of subgroups, in particular $\Gamma_{0}(N)\cap\Gamma_{1}(D)$ for $D|N$ and the subgroups associates with $m$-torsion in $(\mathbb{Z}/N\mathbb{Z})^{\times}$. Finally, Section \ref{Lat} presents the lattices $L(N,D)$ and calculates their automorphism groups as well as their discriminant kernels.

\smallskip

I would like to thank D. Jeol for sharing the details of \cite{[IJK]} with me, and therefore exposed me to some additional literature that I was not aware of at the time. Many thanks are due to the anonymous referee, for the numerous suggestions and correction that improved greatly the clarity and smoothness of the presentation of this paper.

\section{Some Matrix Groups \label{MatGp}}

In this Section we present some types of groups, which will appear as normalizers of congruence subgroups below. We recall that $\Gamma_{0}(N)$, where $N$ is any positive integer, is the group consisting of those matrices $\big(\begin{smallmatrix} a & b \\ c & d\end{smallmatrix}\big) \in SL_{2}(\mathbb{Z})$ such that $N|c$, and that $\Gamma_{1}(N)$ is the subgroup of $\Gamma_{0}(N)$ in elements of which the diagonal entries $a$ and $d$ are congruent to 1 modulo $N$. Adding the condition $N|b$ yields the principal congruence subgroup $\Gamma(N)$, which is normal in $SL_{2}(\mathbb{Z})$ with quotient $SL_{2}(\mathbb{Z}/N\mathbb{Z})$.

Given any positive integer $M$, we define $s_{M}$ to be the square root of its square part, and $t_{M}$ is the ``remainder''. This means that $s_{M}$ and $t_{M}$ are the unique positive integers such that $M=s_{M}^{2}t_{M}$ with $t_{M}$ square-free. In addition, for a prime number $p$ and an integer $M$, we denote the maximal integer $k$ such that $p^{k}|M$ by $v_{p}(M)$ (the \emph{$p$-adic valuation} of $M$). A divisor $\mu$ of an integer $N$ is called \emph{exact} if it is co-prime to $\frac{N}{\mu}$.

\smallskip

\begin{defn}
Let $\Gamma_{0}^{*,s_{N}}(N)$ be the set of matrices $A \in SL_{2}(\mathbb{R})$ which admit a presentation of the form $\big(\begin{smallmatrix} a\sqrt{\mu} & b/\sqrt{\mu} \\ c\frac{N}{\mu}\sqrt{\mu} & d\sqrt{\mu}\end{smallmatrix}\big)$, where $\mu$ is an exact divisor of $N$, $a$ and $d$ are in $\frac{1}{s_{\mu}}\mathbb{Z}$, and $b$ and $c$ are in $\frac{1}{s_{N/\mu}}\mathbb{Z}$. The equality $ad\mu-bc\frac{N}{\mu}=1$, which is equivalent to $A$ indeed being in $SL_{2}(\mathbb{R})$, will be referred to as the \emph{$SL_{2}$ condition} in what follows. \label{Gamma0*sNdef}
\end{defn}

The properties of $\Gamma_{0}^{*,s_{N}}(N)$ which will be of interest to us are the following ones.
\begin{prop}
\begin{enumerate}[$(i)$]
\item $\Gamma_{0}^{*,s_{N}}(N)$ is a subgroup of $SL_{2}(\mathbb{R})$.
\item Conjugation by $\Gamma_{0}^{*,s_{N}}(N)$ takes $\Gamma_{1}(N)$ into $\Gamma_{0}(N)$.
\end{enumerate} \label{1stgrp}
\end{prop}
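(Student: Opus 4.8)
The plan is to verify the group axioms directly in part $(i)$ and to reduce part $(ii)$ to a prime-by-prime valuation check. Throughout I write a general element of $\Gamma_{0}^{*,s_{N}}(N)$ as $\big(\begin{smallmatrix} a\sqrt{\mu} & b/\sqrt{\mu} \\ cN/\sqrt{\mu} & d\sqrt{\mu}\end{smallmatrix}\big)$ (using $c\tfrac{N}{\mu}\sqrt{\mu}=cN/\sqrt{\mu}$), recalling that exactness of $\mu$ forces $v_{p}(\mu)\in\{0,v_{p}(N)\}$ for every prime $p\mid N$, and that the integrality hypotheses read $v_{p}(a),v_{p}(d)\geq-\lfloor v_{p}(\mu)/2\rfloor$ and $v_{p}(b),v_{p}(c)\geq-\lfloor v_{p}(N/\mu)/2\rfloor$. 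The one elementary inequality used repeatedly is $v_{p}(N)-2\lfloor v_{p}(N)/2\rfloor\geq0$.

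For $(i)$, containment in $SL_{2}(\mathbb{R})$ is the remark following Definition \ref{Gamma0*sNdef}. The identity arises by taking $\mu=1$, $a=d=1$, $b=c=0$, and the inverse of the displayed matrix is $\big(\begin{smallmatrix} d\sqrt{\mu} & -b/\sqrt{\mu} \\ -cN/\sqrt{\mu} & a\sqrt{\mu}\end{smallmatrix}\big)$, which has the same shape with the same $\mu$ and with $a,b,c,d$ replaced by $d,-b,-c,a$, so all hypotheses are visibly preserved. The substance of $(i)$ is closure under multiplication. Given a second element with exact divisor $\nu$ and entries $a',b',c',d'$, I set $g=\gcd(\mu,\nu)$ and $\rho=\mu\nu/g^{2}$. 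A glance at valuations shows $v_{p}(\rho)\in\{0,v_{p}(N)\}$, so $\rho$ is again an exact divisor of $N$ and is the natural divisor to attach to the product. Multiplying the two matrices and factoring $\sqrt{\rho}$ (resp.\ $1/\sqrt{\rho}$, resp.\ $(N/\rho)\sqrt{\rho}$) out of the diagonal (resp.\ upper-right, resp.\ lower-left) entry, one reads off the new parameters
\[
a''=aa'g+bc'\tfrac{Ng}{\mu\nu},\quad d''=dd'g+cb'\tfrac{Ng}{\mu\nu},\quad b''=ab'\tfrac{\mu}{g}+bd'\tfrac{\nu}{g},\quad c''=ca'\tfrac{\nu}{g}+dc'\tfrac{\mu}{g}.
\]
It then remains to check $a'',d''\in\frac1{s_{\rho}}\mathbb{Z}$ and $b'',c''\in\frac1{s_{N/\rho}}\mathbb{Z}$, i.e.\ the integrality bounds with $\mu$ replaced by $\rho$; the determinant condition $a''d''\rho-b''c''\frac{N}{\rho}=1$ is automatic from multiplicativity of $\det$. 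I would verify these bounds one prime $p\mid N$ at a time, distinguishing the four cases according to whether $p$ divides $\mu$ and whether it divides $\nu$. In each case the quantities $v_{p}(g),v_{p}(\rho),v_{p}(\mu/g),v_{p}(\nu/g),v_{p}(Ng/\mu\nu)$ are completely determined, and the required inequality for each of $a'',b'',c'',d''$ drops out of the hypothesis bounds together with $v_{p}(N)-2\lfloor v_{p}(N)/2\rfloor\geq0$.

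For $(ii)$, write $A$ with divisor $\mu$ as above and $\gamma=\big(\begin{smallmatrix}a_{1} & b_{1}\\ c_{1} & d_{1}\end{smallmatrix}\big)\in\Gamma_{1}(N)$, so that $a_{1}\equiv d_{1}\equiv1$ and $c_{1}\equiv0\pmod N$. Using the inverse from $(i)$, I would expand $A\gamma A^{-1}=\frac1{\mu}\big(\begin{smallmatrix} a\mu & b\\ cN & d\mu\end{smallmatrix}\big)\gamma\big(\begin{smallmatrix} d\mu & -b\\ -cN & a\mu\end{smallmatrix}\big)$ into its four entries. Since $\det(A\gamma A^{-1})=1$ automatically, it suffices to prove that all four entries are integers and that the lower-left entry is divisible by $N$; this is again a valuation statement at each $p\mid N$, the entries being $p$-integral for $p\nmid N$ because the denominators of $a,b,c,d$ involve only primes dividing $N$.

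The heart of the argument, and the step I expect to be most delicate, is organizing this last check. Here exactness again collapses everything into two cases, $p\mid\mu$ and $p\nmid\mu$, in which the pairs $(v_{p}(a),v_{p}(d))$ and $(v_{p}(b),v_{p}(c))$ carry opposite ``denominator budgets''. The naive term-by-term bound fails for some summands---for instance the $aba_{1}$ contribution to the upper-right entry once $v_{p}(\mu)\geq2$---so I would first group the entries to expose the congruences coming from $\gamma\in\Gamma_{1}(N)$: pairing the $a_{1}$- and $d_{1}$-terms into their difference and isolating the factor $c_{1}$ gains an extra valuation $\geq v_{p}(N)$ from $a_{1}-d_{1}\equiv0$ and $c_{1}\equiv0\pmod N$. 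For example the lower-left entry equals $cdN(a_{1}-d_{1})+d^{2}\mu c_{1}-c^{2}\frac{N^{2}}{\mu}b_{1}$, and in either case each of these three grouped summands has $p$-valuation at least $v_{p}(N)$, giving divisibility by $N$; the analogous grouping renders the remaining three entries $p$-integral. Once this bookkeeping is complete in both cases, $A\gamma A^{-1}\in\Gamma_{0}(N)$ follows, proving $(ii)$.
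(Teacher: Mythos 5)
Your proof is correct and follows essentially the same route as the paper: closure under inversion by the explicit adjugate, closure under multiplication via the exact divisor $\mu\nu/\gcd\{\mu,\nu\}^{2}$ with the same product coefficients as in Equation \eqref{prodform}, and for part $(ii)$ the same expansion of $A\gamma A^{-1}$ with the terms grouped so that the factors $a_{1}-d_{1}$ and $c_{1}$ (both divisible by $N$) absorb the denominators of $ab$, $cd$, $b^{2}/\mu$, and $c^{2}N/\mu$. Your four-case valuation check merely spells out what the paper dismisses as ``easy to verify,'' so there is nothing to add.
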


\begin{proof}
We first observe that $\Gamma_{0}^{*,s_{N}}(N)$ is stable under inversion. For evaluating the product of the matrix appearing in Definition \ref{Gamma0*sNdef} with another such matrix, say $\big(\begin{smallmatrix} e\sqrt{\nu} & f/\sqrt{\nu} \\ g\frac{N}{\nu}\sqrt{\nu} & h\sqrt{\nu}\end{smallmatrix}\big)$, we define $\delta=\gcd\{\mu,\nu\}$ and $\kappa=\frac{\mu\nu}{\delta^{2}}$. Then $\kappa$ is an exact divisor of $N$, and the product in question equals
\begin{equation}
\begin{pmatrix} (ae\delta+bg\frac{N}{\delta\kappa})\sqrt{\kappa} & (af\frac{\mu}{\delta}+bh\frac{\nu}{\delta})/\sqrt{\kappa} \\ (ce\frac{\nu}{\delta}+dg\frac{\mu}{\delta})\frac{N}{\kappa}\sqrt{\kappa} & (cf\frac{N}{\delta\kappa}+dh\delta)\sqrt{\kappa}\end{pmatrix}. \label{prodform}
\end{equation}
As $s_{\mu}a$, $s_{\mu}d$, $s_{N/\mu}b$, $s_{N/\mu}c$, $s_{\nu}e$, $s_{\nu}h$, $s_{N/\nu}f$, and $s_{N/\nu}g$ are integers, it is easy to verify that multiplying the expressions appearing in parentheses in the diagonal entries of the matrix from Equation \eqref{prodform} by $s_{\kappa}$ and the ones appearing in the off-diagonal entries by $s_{N/\kappa}$ yield integral values as well. Hence the product also satisfies the conditions of Definition \ref{Gamma0*sNdef}, establishing part $(i)$.

For part $(ii)$ we observe that conjugating any matrix $\gamma=\big(\begin{smallmatrix} e & f \\ g & h\end{smallmatrix}\big) \in M_{2}(\mathbb{R})$ by the matrix $A$ from Definition \ref{Gamma0*sNdef} yields
\begin{equation}
\begin{pmatrix} bdg-acfN+ade\mu-bch\frac{N}{\mu} & a^{2}f\mu-ab(e-h)-b^{2}\frac{g}{\mu} \\ d^{2}g\mu+cdN(e-h)-c^{2}f\frac{N^{2}}{\mu} & acfN-bdg+adh\mu-bce\frac{N}{\mu}\end{pmatrix}. \label{conjform}
\end{equation}
Hence if $\gamma\in\Gamma_{1}(N)$ then the conditions on $a$, $b$, $c$, and $d$ in Definition \ref{Gamma0*sNdef} combine with the fact that $\frac{g}{N}$, $\frac{e-h}{N}$, and $f$ are integers to show that the conjugated matrix $A\gamma A^{-1}$ lies in $\Gamma_{0}(N)$. This proves part $(ii)$, which completes the proof of the proposition.
\end{proof}

Regarding the uniqueness and normalization of the presentation from Definition \ref{Gamma0*sNdef}, we obtain the following corollary.
\begin{cor}
Let $A$ be an element of $\Gamma_{0}^{*,s_{N}}(N)$, presented as in Definition \ref{Gamma0*sNdef}, and let $p$ be a prime dividing $N$.
\begin{enumerate}[$(i)$]
\item If $v_{p}(N)$ is odd (i.e., $p|t_{N}$) then $p$ divides either $ad\mu$ or $bc\frac{N}{\mu}$ (according to whether $p$ divides $\mu$ or $\frac{N}{\mu}$).
\item If $v_{p}(N)$ is even then one may obtain a different presentation by considering the exact divisor of $N$ that differs from $\mu$ by the factor $p^{v_{p}(N)}$.
\end{enumerate} \label{uniqpres}
\end{cor}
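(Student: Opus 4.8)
The plan is to reduce both parts to bookkeeping of $p$-adic valuations at the single prime $p$, exploiting that the exactness of $\mu$ forces $v_p(\mu)$ to be either $0$ or $v_p(N)$, together with $v_p(s_\mu)=\lfloor v_p(\mu)/2\rfloor$ (and similarly for $N/\mu$, coming from $\mu=s_\mu^2 t_\mu$ with $t_\mu$ square-free). The only quantities in sight that are honestly rational, rather than carrying a factor of $\sqrt{\mu}$, are the two products $ad\mu$ and $bc\frac N\mu$, so I would first record that both are integers: since $ad\in\frac1{s_\mu^2}\mathbb Z$ and $\mu=s_\mu^2 t_\mu$ we get $ad\mu\in t_\mu\mathbb Z\subseteq\mathbb Z$, and likewise $bc\frac N\mu\in t_{N/\mu}\mathbb Z\subseteq\mathbb Z$, consistent with their difference being the integer $1$.

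For part $(i)$, suppose first $p\mid\mu$, so that $v_p(\mu)=v_p(N)=2k+1$ is odd and $v_p(s_\mu)=k$. The constraints $a,d\in\frac1{s_\mu}\mathbb Z$ give $v_p(a),v_p(d)\geq -k$, whence $v_p(ad\mu)\geq -2k+(2k+1)=1$, i.e. $p\mid ad\mu$. If instead $p\mid\frac N\mu$, the same count applied to $b,c\in\frac1{s_{N/\mu}}\mathbb Z$ with $v_p(s_{N/\mu})=k$ yields $v_p(bc\frac N\mu)\geq 1$, i.e. $p\mid bc\frac N\mu$. Since $\mu$ is exact, exactly one of the two cases occurs, which is precisely the asserted dichotomy.

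For part $(ii)$, write $v_p(N)=2k$, so $v_p(s_N)=k$, and assume $p\mid\mu$ (the case $p\mid\frac N\mu$ being symmetric). I would exhibit a second presentation of the same matrix $A$ obtained by moving the entire $p$-part of $N$ from $\mu$ to $\frac N\mu$: set $\mu'=\mu/p^{2k}$, which is again an exact divisor of $N$, now with $v_p(\mu')=0$ and $v_p(N/\mu')=2k$. Comparing the four entries and using $\sqrt{\mu'}=\sqrt{\mu}/p^{k}$ forces $a'=ap^{k}$, $d'=dp^{k}$, $b'=b/p^{k}$, $c'=c/p^{k}$, so that the factor $p^{k}=p^{v_p(s_N)}$ is exactly what is transferred between the two coordinate pairs (equivalently, between the prefactors $s_\mu$ and $s_{N/\mu}$). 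One then checks the three requirements of Definition \ref{Gamma0*sNdef}: the valuation bounds shift precisely as needed, so that $a',d'\in\frac1{s_{\mu'}}\mathbb Z$ and $b',c'\in\frac1{s_{N/\mu'}}\mathbb Z$; and the determinant condition is invariant, since $a'd'\mu'-b'c'\frac N{\mu'}=ad\mu-bc\frac N\mu=1$.

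I expect the only delicate point to be the integrality check after the transfer, and in particular seeing why the analogous move is impossible in part $(i)$: when $v_p(N)$ is odd the required factor $\sqrt{\mu/\mu'}=p^{v_p(N)/2}$ is irrational, so no integral choice of $a',b',c',d'$ can absorb it and the $p$-part of $\mu$ is rigid. This is the sense in which part $(i)$ is a statement of uniqueness at the primes dividing $t_N$ and part $(ii)$ a statement of non-uniqueness at the remaining primes; I would take care to phrase the transfer so that the factor named $p^{v_p(s_N)}$ in the statement is identified with the change in the square-root prefactor, matching the explicit coefficients found above.
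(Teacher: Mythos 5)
Your proposal is correct and follows essentially the same route as the paper: part $(i)$ is the same $p$-adic valuation count showing the denominators of $a,d$ (resp.\ $b,c$) can cancel at most $p^{2v_p(s_N)}=p^{v_p(N)-1}$ from the $p$-part of $\mu$ (resp.\ $\frac{N}{\mu}$), and part $(ii)$ is the same explicit transfer $\mu\mapsto\mu/p^{2v_p(s_N)}$ with $a'=ap^{v_p(s_N)}$, $b'=b/p^{v_p(s_N)}$, etc., followed by the check that the new denominators are still bounded by the relevant square parts. The closing observation about irrationality of $p^{v_p(N)/2}$ for odd $v_p(N)$ is a nice complement but not needed for the statement.
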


\begin{proof}
If $v_{p}(N)$ is odd and $p|\mu$ (resp. $p\big|\frac{N}{\mu}$) then the powers of $p$ in the denominators of $a$ and $d$ (resp. $b$ and $c$) can cancel at most a power of $2v_{p}(s_{N})=v_{p}(N)-1$ from the expression $p^{v_{p}(N)}$ appearing in $\mu$ (resp. $p\big|\frac{N}{\mu}$). This proves part $(i)$. For part $(ii)$, note that if $\mu=p^{2v_{p}(s_{N})}\nu$ then the matrix $A$ from Definition \ref{Gamma0*sNdef} can also be written as $\big(\begin{smallmatrix} ap^{v_{p}(s_{N})}\sqrt{\nu} & (b/p^{v_{p}(s_{N})})/\sqrt{\nu} \\ (c/p^{v_{p}(s_{N})})\frac{N}{\nu}\sqrt{\nu} & dp^{v_{p}(s_{N})}\sqrt{\nu}\end{smallmatrix}\big)$, while in case $p$ divides $\frac{N}{\mu}$ to an even power $2v_{p}(s_{N})$ then it can take the form $\big(\begin{smallmatrix} (a/p^{v_{p}(s_{N})})\sqrt{\nu} & bp^{v_{p}(s_{N})}/\sqrt{\nu} \\ cp^{v_{p}(s_{N})}\frac{N}{\nu}\sqrt{\nu} & (d/p^{v_{p}(s_{N})})\sqrt{\nu}\end{smallmatrix}\big)$. As the new rational coordinates have the required bounded denominators, this proves the corollary.
\end{proof}

Of the several presentations an element of $\Gamma_{0}^{*,s_{N}}(N)$ can have according to \ref{uniqpres}, some presentations are more convenient than others.
\begin{lem}
\begin{enumerate}[$(i)$]
\item Let $\gamma$ be an element of $\Gamma_{0}^{*,s_{N}}(N)$ presented as in Definition \ref{Gamma0*sNdef}. The two summands from the $SL_{2}$ condition, as well as the four products $ab$, $ac$, $bd$, and $cd$, are independent of the presentation.
\item Any element of $\Gamma_{0}^{*,s_{N}}(N)$ has at least one presentation in which the four products from part $(i)$ involve no cancelations.
\end{enumerate} \label{nocanN}
\end{lem}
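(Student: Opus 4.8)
The plan is to handle the two parts separately. Part $(i)$ is essentially immediate once one recalls that $A$ is a fixed matrix in $M_{2}(\mathbb{R})$, so I would simply exhibit each of the six quantities as a product of entries of $A$ (divided, when necessary, by the fixed integer $N$). Writing the entries from Definition \ref{Gamma0*sNdef} as $a\sqrt{\mu}$, $b/\sqrt{\mu}$, $c\frac{N}{\mu}\sqrt{\mu}=cN/\sqrt{\mu}$, and $d\sqrt{\mu}$, one reads off $ad\mu=(a\sqrt{\mu})(d\sqrt{\mu})$ and $bc\frac{N}{\mu}=(b/\sqrt{\mu})(cN/\sqrt{\mu})$ for the two summands, as well as $ab=(a\sqrt{\mu})(b/\sqrt{\mu})$, $bd=(b/\sqrt{\mu})(d\sqrt{\mu})$, $ac=(a\sqrt{\mu})(cN/\sqrt{\mu})/N$, and $cd=(cN/\sqrt{\mu})(d\sqrt{\mu})/N$. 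Since every right-hand side depends only on the entries of $A$ and on $N$, all six are independent of the presentation.

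For part $(ii)$ I would argue one prime at a time, which is legitimate because, by Corollary \ref{uniqpres}, the only freedom in the presentation is to move the factor $p^{v_{p}(s_{N})}$ across the diagonal for primes $p$ with $v_{p}(N)$ even, and such a move alters only the $p$-adic valuations of $a,b,c,d$. The tool I would rely on is a reformulation of the $SL_{2}$ relation: writing $a=\alpha/s_{\mu}$, $d=\delta/s_{\mu}$, $b=\beta/s_{N/\mu}$, and $c=\gamma/s_{N/\mu}$ with $\alpha,\beta,\gamma,\delta\in\mathbb{Z}$, the equality $ad\mu-bc\frac{N}{\mu}=1$ becomes $\alpha\delta t_{\mu}-\beta\gamma t_{N/\mu}=1$, so the two integers $\alpha\delta t_{\mu}$ and $\beta\gamma t_{N/\mu}$ are coprime; note also that each of $ab,ac,bd,cd$ lies in $\frac{1}{s_{N}}\mathbb{Z}$, so $p$-adic valuations are meaningful. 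For a prime $p\mid t_{N}$ dividing $\mu$ this forces, after reduction modulo $p$ and using $p\nmid t_{N/\mu}$, that $p\nmid\beta\gamma$; hence $b$ and $c$ are $p$-units and none of the four products can cancel at $p$ in any presentation, with the roles of $(a,d)$ and $(b,c)$ exchanged when $p\mid\frac{N}{\mu}$.

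This reduces part $(ii)$ to the primes $p$ with $v_{p}(N)=2k$ even, where I genuinely get to choose whether $p^{2k}$ sits in $\mu$ or in $\frac{N}{\mu}$. The same coprimality yields the dichotomy that, in the presentation with $p^{2k}\mid\mu$, either $v_{p}(b)=v_{p}(c)=0$ or $v_{p}(a)=v_{p}(d)=-k$. In the first case the off-diagonal coefficients are already $p$-units, so no product cancels at $p$; in the second case I transfer $p^{k}$ as in Corollary \ref{uniqpres}, turning $a$ and $d$ into $p$-units, after which again no product cancels at $p$. Since these choices at distinct even-valuation primes are independent, making the winning choice at each one assembles into a single exact divisor $\mu$, and hence into one presentation free of cancellation in all four products at once.

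The main obstacle I anticipate is the valuation bookkeeping underlying part $(ii)$: first making precise what ``no cancellation'' should mean for a product of two rationals with prescribed denominators $s_{\mu}$ and $s_{N/\mu}$, and then verifying that the coprimality extracted from the $SL_{2}$ relation is exactly strong enough to defeat cancellation in all four products simultaneously rather than in just one of them. The pleasant feature that makes this work is the symmetry between the pairs $(a,d)$ and $(b,c)$, which lets the single dichotomy ``diagonal coefficients maximally divisible, or off-diagonal coefficients $p$-integral'' settle all four products together, in both the odd- and even-valuation cases.
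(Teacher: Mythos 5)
Your proposal is correct and follows essentially the same route as the paper: part $(i)$ via expressing the six quantities as products of entries of the matrix itself (the paper's parenthetical alternative), and part $(ii)$ via the coprimality of the two summands $ad\mu$ and $bc\frac{N}{\mu}$ forced by the $SL_{2}$ relation, combined with the transfer of $p^{v_{p}(s_{N})}$ from Corollary \ref{uniqpres} at the even-valuation primes. Your prime-by-prime dichotomy is just a slightly more systematic phrasing of the paper's contradiction argument, and the independence of the choices at distinct primes is handled correctly.
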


\begin{proof}
Part $(i)$ is clear from the explicit formulae for the presentation changes in the proof of Corollary \ref{uniqpres} (or from the fact that the asserted expressions are, up to multiplication by $N$, products of two of the entries of the matrix itself). Now, the conditions from Definition \ref{Gamma0*sNdef} show that only cancelations of primes $p$ dividing $N$ have to be considered. Assuming that $p|\mu$, cancelation in powers of $p$ may only occur if $p$ divides the numerators of either $b$ or $c$. But then the number $bc\frac{N}{\mu}$ would be divisible by $p$, so that $ad\mu$ will be prime to $p$ (since their difference is 1). This can only happen if $v_{p}(N)$ is even and both $a$ and $d$ have the full power $p^{v_{p}(s_{N})}$ in their denominators. But then applying Corollary \ref{uniqpres} to use the divisor $\nu=\mu/p^{v_{p}(N)}$ would give a form in which the numbers in the off-diagonal entries may have $p$-powers in their denominators but the numerators of the new numbers in the diagonal entries are not divisible by $p$. The case where $p\big|\frac{N}{\mu}$ is established by the same argument, interchanging the roles of the diagonal and off-diagonal elements, and using the divisor $p^{v_{p}(N)}\mu$ instead. This establishes part $(ii)$ and completes the proof of the lemma.
\end{proof}

\begin{rmk}
The proof of Lemma \ref{nocanN} shows that a presentation involves cancelations in a prime $p|N$ only in case $p|ad\mu$ but instead of dividing $\mu$ it divides $\frac{N}{\mu}$, or the other way around (i.e., $bc\frac{N}{\mu}$ and $\mu$ are divisible by $p$, and not $\frac{N}{\mu}$). Part $(ii)$ of Lemma \ref{nocanN} allows us to avoid such presentations. A presentation as in part $(ii)$ of Lemma \ref{nocanN} is also in line with the fact that for primes dividing $t_{N}$, the summand from Corollary \ref{uniqpres} (i.e., $ad\mu$ or $bc\frac{N}{\mu}$) corresponding to the divisor of $N$ that is divisible by $p$ is also divisible by $p$. \label{canpdiv}
\end{rmk}

\smallskip

We shall make use of another simple lemma.
\begin{lem}
\begin{enumerate}[$(i)$]
\item The operation sending $\mu$ and $\nu$ of $N$ to $\kappa$ in the proof of Proposition \ref{1stgrp} defines a group structure on the set of exact divisors of $N$, making it a group which is isomorphic to $\{\pm1\}^{\{p|N\}}$.
\item For any divisor $D|N$, there exists a canonical projection from $\{\pm1\}^{\{p|N\}}$ to $\{\pm1\}^{\{p|D\}}$, which is surjective and its kernel consists of all the exact divisors $\mu$ of $N$ which are co-prime to $D$.
\end{enumerate} \label{pm1pdivN}
\end{lem}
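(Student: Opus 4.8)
The plan is to identify the set of exact divisors of $N$ explicitly and then read off the group structure prime by prime. First I would record the basic description of exact divisors: since $\mu$ is exact precisely when it is co-prime to $\frac{N}{\mu}$, for every prime $p \mid N$ the valuation $v_{p}(\mu)$ must be either $0$ or $v_{p}(N)$, with no intermediate value allowed. Hence an exact divisor $\mu$ is determined by, and corresponds bijectively to, the subset $S_{\mu}=\{p \mid N : v_{p}(\mu)=v_{p}(N)\}$ of the primes dividing $N$, and the map $\mu \mapsto S_{\mu}$ (equivalently, the characteristic function of $S_{\mu}$ viewed in $\{\pm1\}^{\{p|N\}}$) is the bijection I will use throughout.

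For part $(i)$, the heart of the matter is to compute the operation $\kappa=\frac{\mu\nu}{\delta^{2}}$, with $\delta=(\mu,\nu)$, in terms of valuations. Since $v_{p}(\delta)=\min(v_{p}(\mu),v_{p}(\nu))$, I would compute $v_{p}(\kappa)=v_{p}(\mu)+v_{p}(\nu)-2\min(v_{p}(\mu),v_{p}(\nu))=|v_{p}(\mu)-v_{p}(\nu)|$ for each $p \mid N$. Because both $v_{p}(\mu)$ and $v_{p}(\nu)$ lie in $\{0,v_{p}(N)\}$, this difference equals $0$ when the two valuations agree and $v_{p}(N)$ when they differ; in particular $v_{p}(\kappa)\in\{0,v_{p}(N)\}$, so $\kappa$ is again exact (as already noted in the proof of Proposition \ref{1stgrp}), which shows the operation is well defined on the set of exact divisors. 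Reading this through the bijection, $S_{\kappa}$ is the symmetric difference of $S_{\mu}$ and $S_{\nu}$, which is precisely componentwise multiplication in $\{\pm1\}^{\{p|N\}}$. The trivial divisor $\mu=1$ is the identity, and the relation $v_{p}(\kappa)=0$ when $\mu=\nu$ shows every element is its own inverse, so the group axioms are inherited from $\big(\{\pm1\}^{\{p|N\}},\cdot\big)$ and the bijection is the asserted isomorphism.

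For part $(ii)$, I would use that $D \mid N$ forces $\{p \mid D\}\subseteq\{p \mid N\}$, so there is a canonical coordinate-restriction map $\{\pm1\}^{\{p|N\}}\to\{\pm1\}^{\{p|D\}}$ that simply discards the entries indexed by primes dividing $N$ but not $D$. This map is a surjective homomorphism, since each coordinate can be prescribed independently, and under the bijection of part $(i)$ it sends the exact divisor corresponding to $S$ to the class of $S\cap\{p \mid D\}$. Its kernel therefore consists of those $S$ with $S\cap\{p \mid D\}=\emptyset$, that is, exactly the exact divisors $\mu$ of $N$ whose prime support avoids every prime of $D$, which is to say those $\mu$ co-prime to $D$.

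I do not expect a genuine obstacle here, as the argument is elementary once the prime-by-prime dictionary is in place. The only point requiring care is the valuation computation $v_{p}(\kappa)=|v_{p}(\mu)-v_{p}(\nu)|$ together with the observation that restricting to exact divisors keeps all valuations in $\{0,v_{p}(N)\}$; it is precisely this restriction that makes the symmetric difference (equivalently, multiplication in $\{\pm1\}$) the correct description of the operation, rather than the ordinary addition of exponents one would get for arbitrary divisors.
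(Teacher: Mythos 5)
Your proof is correct and follows essentially the same route as the paper: identifying exact divisors with subsets of the primes dividing $N$, recognizing $\kappa$ as the symmetric difference (the paper phrases this as ``primes dividing $\mu$ or $\nu$ but not both''), and taking the coordinate-restriction map for part $(ii)$. Your version merely spells out the valuation computation $v_{p}(\kappa)=|v_{p}(\mu)-v_{p}(\nu)|$ that the paper leaves implicit.
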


\begin{proof}
If $\mu=\prod_{p|\mu}p^{v_{p}(N)}$ and $\nu=\prod_{p|\nu}p^{v_{p}(N)}$ then $\kappa$ is the product of $p^{v_{p}(N)}$ over all the primes dividing $\mu$ or $\nu$ but not both. This proves part $(i)$. For part $(ii)$ the map taking the component of $p|D$ to itself and the component of $p|N$ which does not divide $D$ to the trivial element is clearly well-defined and surjective, and the translation to divisors of $N$ immediately yields the desired assertion. This proves the lemma.
\end{proof}

A more explicit description for the projection map from part $(ii)$ of Lemma \ref{pm1pdivN} can be given by the formula $\mu\mapsto\gcd\{\mu,D\}$. The resulting group is based on exact divisors of $D$, a set which also produces a group isomorphic to $\{\pm1\}^{\{p|D\}}$ by part $(i)$ of that lemma. However, for our purposes it would be more convenient to consider elements $\{\pm1\}^{\{p|D\}}$, when arising as a quotient of $\{\pm1\}^{\{p|N\}}$, as classes of exact divisors of $N$ modulo the appropriate equivalence relation (identifying divisors that differ only by powers of primes dividing $N$ but not $D$) rather than as divisors of $D$.

We now generalize Definition \ref{Gamma0*sNdef} as follows.
\begin{defn}
Let $\sigma$ be any divisor of $s_{N}$. Then we define $\Gamma_{0}^{*,\sigma}(N)$ to be the set of elements of $\Gamma_{0}^{*,s_{N}}(N)$ for which the power of $p$ dividing any of the denominators of $a$, $b$, $c$, and $d$ in a presentation as in Definition \ref{Gamma0*sNdef} which satisfies the conditions of Lemma \ref{nocanN} is at most $v_{p}(\sigma)$. In particular, the group $\Gamma_{0}^{*}(N)=\Gamma_{0}^{*,1}(N)$, in elements of which $a$, $b$, $c$, and $d$ are integers, is the group obtained from $\Gamma_{0}(N)$ by adding the \emph{Atkin--Lehner involutions}. \label{Gamma0*sigmadef}
\end{defn}

\begin{rmk}
Lemma \ref{nocanN} shows that the condition of Definition \ref{Gamma0*sigmadef} is satisfied if and only if $\sigma ab$ and $\sigma cd$, or equivalently $\sigma ac$ and $\sigma bd$, are integral. Part $(i)$ of Lemma \ref{uniqpres} shows that these equivalent characterizations of elements of $\Gamma_{0}^{*,\sigma}(N)$ are satisfied also in presentations that may not satisfy the conditions of Lemma \ref{nocanN}. \label{Gamma*sigmaeq}
\end{rmk}

\begin{prop}
\begin{enumerate}[$(i)$]
\item For any $\sigma|s_{N}$ the set $\Gamma_{0}^{*,\sigma}(N)$ from Definition \ref{Gamma0*sigmadef} is a subgroup of $\Gamma_{0}^{*,s_{N}}(N)$ that contains $\Gamma_{0}(N)$.
\item Two different presentations of elements of $\Gamma_{0}^{*,\sigma}(N)$, both of which satisfy the conditions of Lemma \ref{nocanN}, may arise from the operations considered in part $(ii)$ of Corollary \ref{uniqpres}, but only using primes $p|N$ that do not divide $\frac{N}{\sigma^{2}}$.
\item The index $[\Gamma_{0}^{*,\sigma}(N):\Gamma_{0}(N)]$ is $\sigma^{2}\prod_{p|\sigma,\ v_{p}(N)=2v_{p}(\sigma)}\big(1+\frac{1}{p}\big)\cdot\prod_{p|N/\sigma^{2}}2$.
\end{enumerate} \label{Gamma0*sigma}
\end{prop}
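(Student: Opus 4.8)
For part $(i)$, containment of $\Gamma_0(N)$ is immediate: every $\gamma\in\Gamma_0(N)$ has the presentation with $\mu=1$ and integral $a,b,c,d$, so all denominators are trivial and a fortiori bounded by $\sigma$. I would establish the subgroup property through the presentation-independent characterization of Remark \ref{Gamma*sigmaeq}, that $A\in\Gamma_0^{*,\sigma}(N)$ iff $\sigma ab,\sigma ac,\sigma bd,\sigma cd\in\mathbb Z$. Inversion is then transparent, since passing to $A^{-1}$ merely swaps the two diagonal coordinates and changes the sign of the off-diagonal ones, permuting these four products among themselves up to sign. For products I would insert the coordinates of \eqref{prodform} and expand $\sigma a'b'$ and $\sigma c'd'$; each of the four resulting summands is $\sigma$ times one of the single-factor products (such as $ab$, $ef$ or $gh$) multiplied by a quantity such as $a^{2}\mu$, $b^{2}\tfrac N\mu$, $eh\nu$ or $fg\tfrac{N\delta}\nu$ that is an integer by the $\tfrac1{s_\bullet}\mathbb Z$-conditions of Definition \ref{Gamma0*sNdef}. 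Hence $\sigma a'b',\sigma c'd'\in\mathbb Z$, and Remark \ref{Gamma*sigmaeq} returns the product to $\Gamma_0^{*,\sigma}(N)$.

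For part $(ii)$ the engine is Remark \ref{canpdiv}: in any presentation meeting the conditions of Lemma \ref{nocanN}, the full prime power $p^{v_p(N)}$ must sit in the divisor ($\mu$ or $\tfrac N\mu$) attached to whichever of the two summands $ad\mu$, $bc\tfrac N\mu$ is divisible by $p$. Because these summands differ by $1$, at most one is divisible by $p$, so a second cancellation-free presentation differing at $p$ can occur only when \emph{neither} summand is divisible by $p$; this in particular needs $v_p(N)$ even, so that the transfer of Corollary \ref{uniqpres}$(ii)$ is available. I would then read off the $p$-adic valuations of the four matrix entries in this situation: the two conditions that the summands are prime to $p$, together with the coordinate bounds of Definition \ref{Gamma0*sNdef}, pin the valuations down completely and force each of $ab,ac,bd,cd$ to have $p$-valuation exactly $-v_p(s_N)$. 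Since membership in $\Gamma_0^{*,\sigma}(N)$ requires these valuations to be $\ge-v_p(\sigma)$, we obtain $v_p(\sigma)\ge v_p(s_N)$, and as $\sigma\mid s_N$ this is the equality $v_p(\sigma)=v_p(s_N)$, i.e. $p\nmid\tfrac N{\sigma^2}$, as claimed.

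For part $(iii)$ I would factor the index through the subgroup $\Gamma^{\sigma}\le\Gamma_0^{*,\sigma}(N)$ of elements admitting a presentation with $\mu=1$ (a subgroup, since $\mu=\nu=1$ forces $\kappa=1$ in \eqref{prodform}); by part $(ii)$ this is exactly the kernel of the well-defined exact-divisor (Atkin--Lehner) invariant $\Gamma_0^{*,\sigma}(N)\to\{\pm1\}^{\{p\mid N/\sigma^2\}}$. The crucial observation is that conjugation by $\operatorname{diag}(\sigma,1)$ carries $\Gamma^{\sigma}$ onto the honest congruence group $\Gamma_0(N/\sigma^2)$ and simultaneously carries $\Gamma_0(N)$ onto $\Gamma^{0}(\sigma)\cap\Gamma_0(N/\sigma)$, the denominator bounds becoming the divisibilities $\sigma\mid b$, $\tfrac N\sigma\mid c$ precisely because $\sigma^{2}\mid N$. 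A prime-by-prime (CRT) count then uses the local index $L(i,j):=[SL_2(\mathbb Z_p):\Gamma^{0}(p^{i})\cap\Gamma_0(p^{j})]=p^{\,i+j-1}(p+1)$, which depends only on $i+j$; hence $[SL_2(\mathbb Z):\Gamma^{0}(\sigma)\cap\Gamma_0(N/\sigma)]=[SL_2(\mathbb Z):\Gamma_0(N)]$, giving $[\Gamma^{\sigma}:\Gamma_0(N)]=[SL_2(\mathbb Z):\Gamma_0(N)]/[SL_2(\mathbb Z):\Gamma_0(N/\sigma^2)]=\sigma^{2}\prod_{p\mid\sigma,\,v_p(N)=2v_p(\sigma)}(1+\tfrac1p)$. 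Finally the Atkin--Lehner invariant is surjective (realized by the involutions $W_\mu$), so $[\Gamma_0^{*,\sigma}(N):\Gamma^{\sigma}]=\prod_{p\mid N/\sigma^2}2$, and multiplying the two indices yields the asserted formula.

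The delicate part is the primes with $v_p(N)=2v_p(\sigma)$. There one might naively expect both an Atkin--Lehner factor $2$ and a scaling factor $p^{2v_p(\sigma)}$, whereas the true contribution is the single factor $p^{2v_p(\sigma)}(1+\tfrac1p)$. This is exactly where part $(ii)$ intervenes: at such primes the Atkin--Lehner transfer already lands inside $\Gamma^{\sigma}$, so it must \emph{not} be recounted in $[\Gamma_0^{*,\sigma}(N):\Gamma^{\sigma}]$, and it instead resurfaces through $L(i,j)$ being insensitive to how $i+j$ is split. Making the two bookkeeping devices interlock without double counting—the conjugation identification of $\Gamma^{\sigma}$ together with its correct denominator bounds, and the order of the exact-divisor invariant—is the main thing to get right; the remainder is routine congruence-subgroup index arithmetic.
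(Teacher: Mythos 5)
Your proof is correct and follows essentially the same route as the paper: part $(i)$ via the integrality of $\sigma ab$, $\sigma cd$ in the product formula \eqref{prodform} (the paper explicitly offers this Remark \ref{Gamma*sigmaeq}-based argument as its alternative to re-running the proof of Proposition \ref{1stgrp} with $\gcd\{s_{M},\sigma\}$), part $(ii)$ via the cancellation-free placement of $p$ relative to the summands $ad\mu$ and $bc\frac{N}{\mu}$, and part $(iii)$ via the $\{\pm1\}^{\{p|N/\sigma^{2}\}}$ invariant, the conjugation by $\operatorname{diag}(\sigma,1)$ identifying the kernel with $\Gamma_{0}\big(\frac{N}{\sigma^{2}}\big)$ and $\Gamma_{0}(N)$ with $\Gamma_{0}^{0}\big(\frac{N}{\sigma},\sigma\big)$, and the standard index formulas. (The only blemish is the illustrative coefficient $fg\frac{N\delta}{\nu}$ in part $(i)$, which should be $fg\frac{N}{\nu}$; either is integral, so nothing is affected.)
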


\begin{proof}
It is clear that $\Gamma_{0}^{*,\sigma}(N)$ is closed under inversion. Consider now the formula for the product of two elements appearing in Equation \eqref{prodform}, and assume that the two factors lie in $\Gamma_{0}^{*,\sigma}(N)$ and are presented as in Lemma \ref{nocanN}. Then the same argument as in the proof of Proposition \ref{1stgrp}, but with any number $s_{M}$ replaced by $\gcd\{s_{M},\sigma\}$, establishes part $(i)$ since the containment $\Gamma_{0}(N)\subseteq\Gamma_{0}^{*,\sigma}(N)$ is obvious. Alternatively, one may prove this part by considering the expressions from Remark \ref{Gamma*sigmaeq} in the formula for the product appearing in Equation \eqref{prodform}. Part $(ii)$ follows from Remark \ref{canpdiv}, since the only case where a prime can divide $\mu$ but not $ad\mu$, or $\frac{N}{\mu}$ but not $bc\frac{N}{\mu}$, in an element of $\Gamma_{0}^{*,\sigma}(N)$ is when $2v_{p}\big(\gcd\{s_{\mu},\sigma\}\big)=v_{p}(N)$ or $2v_{p}\big(\gcd\{s_{N/\mu},\sigma\}\big)=v_{p}(N)$, i.e., when $2v_{p}(\sigma)=v_{p}(N)$.

For part $(iii)$, first note that part $(ii)$ shows that the map taking an element $A\in\Gamma_{0}^{*,\sigma}(N)$ to the divisor $\mu$ of $N$ appearing in a presentation satisfying the conditions of Lemma \ref{nocanN} is well-defined up to at most the kernel of the map from $\{\pm1\}^{\{p|N\}}$ to $\{\pm1\}^{\{p|N/\sigma^{2}\}}$ described in part $(ii)$ of Lemma \ref{pm1pdivN}. By parts $(i)$ and $(ii)$ of that lemma, the formula for the product in Equation \eqref{prodform} shows that $\Gamma_{0}^{*,\sigma}(N)$ admits a well-defined group homomorphism to $\{\pm1\}^{\{p|N/\sigma^{2}\}}$, which is clearly surjective. Moreover, the kernel of this map consists precisely of those matrices which admit a presentation as in Lemma \ref{nocanN} with $\mu=1$. Now, the formula from Definition \ref{Gamma0*sNdef} and the condition from Definition \ref{Gamma0*sigmadef} show that the matrix $\frac{1}{\sqrt{\sigma}}\big(\begin{smallmatrix} \sigma & 0 \\ 0 & 1\end{smallmatrix}\big)$ conjugates this kernel to $\Gamma_{0}\big(\frac{N}{\sigma^{2}}\big)$, while its subgroup $\Gamma_{0}(N)$ is taken by this operation to the group $\Gamma_{0}^{0}\big(\frac{N}{\sigma},\sigma\big)$ consisting of those elements $\big(\begin{smallmatrix} a & b \\ c & d\end{smallmatrix}\big)\in\Gamma_{0}\big(\frac{N}{\sigma}\big)$ in which $\sigma|b$. We thus have to compare the indices of these congruence subgroups in $SL_{2}(\mathbb{Z})$. It is known (see, e.g., Section 1.2 of \cite{[DS]}) that the index any group of the form $\Gamma_{0}(M)$ in $SL_{2}(\mathbb{Z})$ is $M\prod_{p|M}\big(1+\frac{1}{p}\big)$, and working modulo the principal congruence subgroup $\Gamma(M)$ we see that a subgroup of the form $\Gamma_{0}^{0}(M,D)$ with $D|M$ has index $D$ in $\Gamma_{0}(M)$. Hence the index of our conjugate of $\Gamma_{0}(N)$ is the same index $N\prod_{p|N}\big(1+\frac{1}{p}\big)$ as $\Gamma_{0}(N)$ itself in $SL_{2}(\mathbb{Z})$ (recall that $\sigma|s_{N}$, so that the prime divisors of $\frac{N}{\sigma}$ coincide with those of $N$), while the conjugate of our kernel has index $\frac{N}{\sigma^{2}}\prod_{p|N/\sigma^{2}}\big(1+\frac{1}{p}\big)$. Taking the quotient between these indices, noting that the sets of primes differ precisely by the ones appearing above, and multiplying by the cardinality of $\{\pm1\}^{\{p|N/\sigma^{2}\}}$ (to go from the index of $\Gamma_{0}(N)$ in the kernel to the index in $\Gamma_{0}^{*,\sigma}(N)$ itself), yield the asserted value of the index. This completes the proof of the proposition.
\end{proof}

\begin{rmk}
The proof of part $(iii)$ in Proposition \ref{Gamma0*sigma} used the conjugation of the kernel appearing there by the matrix $\frac{1}{\sqrt{\sigma}}\big(\begin{smallmatrix} \sigma & 0 \\ 0 & 1\end{smallmatrix}\big)$. Extending the conjugation to all of  $\Gamma_{0}^{*,\sigma}(N)$ yields the Atkin--Lehner group $\Gamma_{0}^{*}\big(\frac{N}{\sigma^{2}}\big)$, with no fractions. Moreover, the map from $\Gamma_{0}^{*,\sigma}(N)$ to the quotient $\{\pm1\}^{\{p|N/\sigma^{2}\}}$ of $\{\pm1\}^{\{p|N\}}$ as in part $(ii)$ of Lemma \ref{pm1pdivN} and the one from $\Gamma_{0}^{*}\big(\frac{N}{\sigma^{2}}\big)$ to $\{\pm1\}^{\{p|N/\sigma^{2}\}}$ based on exact divisors of $\frac{N}{\sigma^{2}}$ commute with this conjugation. This is the reason for the notation used by \cite{[CN]} and others for $\Gamma_{0}^{*,\sigma}(N)$, for $\sigma$ (denoted by $h$ in that reference) being the value appearing in Corollary \ref{Gamma1N0N}. However, we shall stick to our $\Gamma_{0}^{*,\sigma}(N)$, since these groups, rather than their conjugates $\Gamma_{0}^{*}\big(\frac{N}{\sigma^{2}}\big)$, are the ones appearing as normalizers below. \label{conjGamma*N/sigma2}
\end{rmk}

\section{Determining Normalizers \label{DetNorm}}

The intermediate groups between $\Gamma_{1}(N)$ and $\Gamma_{0}(N)$ are in correspondence with the subgroups of the quotient group $(\mathbb{Z}/N\mathbb{Z})^{\times}$. We denote $\Gamma_{H}$ the intermediate group which corresponds to the subgroup $H\subseteq(\mathbb{Z}/N\mathbb{Z})^{\times}$. In this Section we establish criteria for determining the normalizer of $\Gamma_{H}$ in general. This results will be used for describing the normalizer of $\Gamma_{H}$ for particular types of subgroups $H$ explicitly in the following section.

\smallskip

We begin by considering elements $A \in SL_{2}(\mathbb{R})$ such that the corresponding conjugate $A\Gamma_{1}(N)A^{-1}$ of $\Gamma_{1}(N)$ is contained in $\Gamma_{0}(N)$. It is clear that if $A$ normalizes some $\Gamma_{H}$ then it has this property. Now, part $(ii)$ of Proposition \ref{1stgrp} shows that elements of $\Gamma_{0}^{*,s_{N}}(N)$ do this. We would like to show that they are the only ones. The first step is given in the following lemma.
\begin{lem}
Assume that conjugation by a matrix $A=\big(\begin{smallmatrix} e & f \\ g & h\end{smallmatrix}\big) \in SL_{2}(\mathbb{R})$ takes $\Gamma_{1}(N)$ into $\Gamma_{0}(N)$. Then the expressions $e^{2}$, $eg$, $\frac{g^{2}}{N}$, $2Nef$, $N(eh+fg)$, $2gh$, $Nf^{2}$, $Nfh$, and $h^{2}$ are all integral. \label{intent}
\end{lem}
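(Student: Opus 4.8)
The plan is to test the hypothesis on the two standard elements $T=\big(\begin{smallmatrix}1&1\\0&1\end{smallmatrix}\big)$ and $U=\big(\begin{smallmatrix}1&0\\N&1\end{smallmatrix}\big)$ of $\Gamma_{1}(N)$, both of which clearly satisfy the defining conditions of $\Gamma_{1}(N)$. Writing $A^{-1}=\big(\begin{smallmatrix}h&-f\\-g&e\end{smallmatrix}\big)$ (using $\det A=eh-fg=1$), I would compute the conjugate $A\gamma A^{-1}$ for a general $\gamma=\big(\begin{smallmatrix}a&b\\c&d\end{smallmatrix}\big)$ once and for all, obtaining the four entries $a\,eh+c\,fh-b\,eg-d\,fg$, $\ be^{2}-cf^{2}+ef(d-a)$, $\ ch^{2}-bg^{2}+gh(a-d)$, and $\ d\,eh+b\,eg-c\,fh-a\,fg$. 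The hypothesis that $A\gamma A^{-1}\in\Gamma_{0}(N)$ means exactly that all four entries are integral and that the lower-left one is divisible by $N$.

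Specializing to $T$ (that is, $a=d=1$, $b=1$, $c=0$) produces the entries $1-eg$, $e^{2}$, $-g^{2}$, $1+eg$, so integrality forces $eg\in\mathbb{Z}$ and $e^{2}\in\mathbb{Z}$, while divisibility of the lower-left entry by $N$ gives $\frac{g^{2}}{N}\in\mathbb{Z}$. Specializing to $U$ ($a=d=1$, $b=0$, $c=N$) gives $1+Nfh$, $-Nf^{2}$, $Nh^{2}$, $1-Nfh$, so that $Nfh\in\mathbb{Z}$ and $Nf^{2}\in\mathbb{Z}$, and the lower-left entry $Nh^{2}$ being divisible by $N$ yields $h^{2}\in\mathbb{Z}$. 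This already establishes six of the nine asserted expressions.

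The remaining three, $2Nef$, $N(eh+fg)$, and $2gh$, are the crux, and I expect the genuine work to lie here: no single conjugate isolates them, because for $\gamma\in\Gamma_{1}(N)$ the products $ef$ and $gh$ occur in the off-diagonal entries only multiplied by $d-a$ or $a-d$, which lies in $N\mathbb{Z}$. The device I would use is to multiply the target by the relation $eh-fg=1$, trading it for a combination of the six quantities already known to be integral, arranging the powers of $N$ to cancel. Thus $Nef=Nef(eh-fg)=e^{2}\cdot Nfh-eg\cdot Nf^{2}\in\mathbb{Z}$, and $gh=gh(eh-fg)=eg\cdot h^{2}-Nfh\cdot\frac{g^{2}}{N}\in\mathbb{Z}$, where I pair $Nf^{2}$ with $eg$ and $Nfh$ with $\frac{g^{2}}{N}$ so that no fractional power of $N$ survives. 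For the middle expression I would instead use $eh=fg+1$ to write $2fg=(eh)^{2}-(fg)^{2}-1=e^{2}h^{2}-Nf^{2}\cdot\frac{g^{2}}{N}-1\in\mathbb{Z}$, whence $N(eh+fg)=N(2fg+1)\in\mathbb{Z}$.

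I note that this argument in fact delivers the stronger integrality of $Nef$ and $gh$ themselves; the factors of $2$ and the symmetric form $eh+fg$ are natural artifacts of invoking $\det A=1$, and are all that is needed in the sequel. The main obstacle is precisely this passage to the last three expressions: recognizing that the determinant relation rewrites them as products of the first six, together with the bookkeeping of the powers of $N$ so that the factors pair off correctly, is the one non-mechanical step, whereas everything preceding it is direct substitution into the conjugation formula.
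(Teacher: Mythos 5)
Your proof is correct, and it diverges from the paper's in how it handles the last three quantities. The paper conjugates a \emph{third} element of $\Gamma_{1}(N)$, namely $\big(\begin{smallmatrix} 1+N & -N \\ N & 1-N\end{smallmatrix}\big)$, and extracts $2Nef$, $N(eh+fg)$, $2gh$ from its entries by subtracting suitable multiples of the six quantities already known to be integral; you instead use only the two conjugates of $T$ and $U$ and recover the remaining three algebraically from the determinant relation, writing $Nef=e^{2}\cdot Nfh-eg\cdot Nf^{2}$, $gh=eg\cdot h^{2}-Nfh\cdot\frac{g^{2}}{N}$, and $2fg=e^{2}h^{2}-Nf^{2}\cdot\frac{g^{2}}{N}-1$. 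I checked these identities and the pairing of the $N$-factors; they are right, and in fact they deliver the strictly stronger conclusions that $Nef$, $gh$, and $eh+fg$ are themselves integral (which is consistent with the eventual identification of such $A$ with elements of $\Gamma_{0}^{*,s_{N}}(N)$, where $Nef=Nab$, $gh=Ncd$, and $eh+fg=2ad\mu-1$ are visibly integers). What your route buys is a smaller set of test matrices and a sharper statement; what the paper's route buys is uniformity --- every one of the nine expressions is read off directly as an entry (or an entry minus known integers) of some conjugated matrix, which makes the converse direction and the later reuse of exactly this list of nine expressions (in the lattice computation of Lemma \ref{grplat}) more transparent. Either argument is acceptable here, since only the weaker forms $2Nef$, $N(eh+fg)$, $2gh$ are invoked in the sequel.
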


\begin{proof}
We consider only the three elements $\big(\begin{smallmatrix} 1 & 1 \\ 0 & 1\end{smallmatrix}\big)$, $\big(\begin{smallmatrix} 1 & 0 \\ N & 1\end{smallmatrix}\big)$, and $\big(\begin{smallmatrix} 1+N & -N \\ N & 1-N\end{smallmatrix}\big)$ of $\Gamma_{1}(N)$. Conjugation by $A$ yields the matrices $\big(\begin{smallmatrix} 1-eg & e^{2} \\ -g^{2} & 1+eg\end{smallmatrix}\big)$, $\big(\begin{smallmatrix} 1+Nfh & -Nf^{2} \\ Nh^{2} & 1-Nfh\end{smallmatrix}\big)$, and $\big(\begin{smallmatrix} 1+N(eh+fg+fh-eg) & N(e^{2}-2ef-f^{2}) \\ N(h^{2}+2gh-g^{2}) & 1-N(eh+fg+fh-eg)\end{smallmatrix}\big)$ respectively. Now, the first (resp. second) conjugated element lies in $\Gamma_{0}(N)$ if and only if the first (resp. last) three asserted numbers are integral. Assuming this, we find by subtracting the corresponding multiples of these numbers that the third conjugated element is in $\Gamma_{0}(N)$ precisely when the remaining three numbers are in $\mathbb{Z}$. This proves the lemma.
\end{proof}

We shall also make use of the following lemma.
\begin{lem}
Let $\alpha$, $\beta$, $\pi$, $\rho$, $\kappa$, and $N$ be integers, such that $\alpha$ and $\beta$ are positive and $\gcd\{\alpha,\beta\}$, $\gcd\{\alpha,\rho\}$, and $\gcd\{\beta,\pi\}$ all equal to 1. Assume that the quotients $\frac{\pi^{2}\alpha\kappa}{N\beta}$, $\frac{\rho^{2}\beta\kappa}{N\alpha}$, $\frac{\rho}{\alpha}\big(\frac{\kappa\pi\rho}{N}-1\big)$, $\frac{2\pi}{\beta}\big(\frac{\kappa\pi\rho}{N}-1\big)$, and $\frac{1}{\alpha\beta\kappa}\big(\frac{\kappa\pi\rho}{N}-1\big)^{2}$ are integers. Then $\alpha=\beta=1$, and $\kappa$ is a divisor of $N$. \label{alphabeta1}
\end{lem}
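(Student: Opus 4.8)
The plan is to argue one prime at a time, translating every hypothesis into an inequality between $p$-adic valuations. Writing $x=\frac{\kappa\pi\rho}{N}$, the engine of the whole argument is the elementary observation that $v_{p}(x)>0$ forces $v_{p}(x-1)=0$, while $v_{p}(x)<0$ forces $v_{p}(x-1)=v_{p}(x)$. First I would record, for a fixed prime $p$, the valuations of the five listed quotients in terms of $v_{p}(\alpha)$, $v_{p}(\beta)$, $v_{p}(\pi)$, $v_{p}(\rho)$, $v_{p}(\kappa)$, and $v_{p}(N)$, using that the three coprimality assumptions make the relevant partners $p$-adically trivial at any $p$ dividing $\alpha$ or $\beta$.

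To prove $\alpha=1$, suppose $p\mid\alpha$; then $p$ divides neither $\beta$ nor $\rho$, so integrality of $\frac{\rho^{2}\beta\kappa}{N\alpha}$ yields $v_{p}(\kappa)\geq v_{p}(N)+v_{p}(\alpha)$. Consequently $v_{p}(x)=v_{p}(\kappa)+v_{p}(\pi)-v_{p}(N)\geq v_{p}(\alpha)>0$, so $v_{p}(x-1)=0$, and then the third quotient $\frac{\rho}{\alpha}(x-1)$ has valuation $-v_{p}(\alpha)<0$, contradicting its integrality. The same scheme handles $\beta=1$: if $p\mid\beta$ then $p$ divides neither $\alpha$ nor $\pi$, the first quotient forces $v_{p}(\kappa)\geq v_{p}(N)+v_{p}(\beta)$, again $v_{p}(x-1)=0$, and the fourth quotient $\frac{2\pi}{\beta}(x-1)$ has valuation $v_{p}(2)-v_{p}(\beta)$. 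For odd $p$ this is negative, a contradiction; the delicate residual case is $p=2$ with $v_{2}(\beta)=1$, which the fourth quotient cannot rule out.

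This is where the fifth quotient enters, and where I expect most of the care to be needed. In that residual case $\frac{(x-1)^{2}}{\alpha\beta\kappa}$ has valuation $-v_{p}(\alpha)-v_{p}(\beta)-v_{p}(\kappa)<0$, which settles $\beta=1$; I note in passing that this same estimate re-proves $\alpha=1$, so the fifth hypothesis by itself could replace the third and fourth in the vanishing arguments. With $\alpha=\beta=1$ secured, I would prove $\kappa\mid N$ by checking $v_{p}(\kappa)\leq v_{p}(N)$ for each $p$, splitting on the sign of $v_{p}(x)=v_{p}(\kappa)+v_{p}(\pi)+v_{p}(\rho)-v_{p}(N)$. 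If $v_{p}(x)\leq0$ then, since $v_{p}(\pi)$ and $v_{p}(\rho)$ are nonnegative, already $v_{p}(\kappa)\leq v_{p}(N)$; if $v_{p}(x)>0$ then $v_{p}(x-1)=0$, so integrality of $\frac{(x-1)^{2}}{\kappa}$ gives $v_{p}(\kappa)\leq0$, whence $v_{p}(\kappa)=0\leq v_{p}(N)$.

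The only genuine obstacle is the parity phenomenon at $p=2$ in the $\beta$-argument: the factor $2$ in the fourth quotient is precisely what leaves the case $v_{2}(\beta)=1$ open and makes the fifth quotient indispensable there. Everything else reduces to bookkeeping of valuations, and I would display the valuation table once at the outset so that each of the three conclusions becomes a single inequality check.
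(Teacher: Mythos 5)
Your proof is correct, and it takes a genuinely different route from the paper's. The paper argues globally: from the first two quotients and the coprimality hypotheses it extracts $\alpha\beta\mid\kappa$, then $\alpha\mid N$ from the third quotient and $\beta\mid N$ from the fourth (with the fifth quotient invoked only to rule out $\beta$ even when $N$ is odd), and finally kills $\alpha\beta$ by an infinite descent --- cancelling $\alpha\beta$ from $\kappa$ and $N$ reproduces the hypotheses, so $\alpha\beta$ would have to divide $N$ infinitely often. You instead work one prime at a time and reach a direct contradiction: for $p\mid\alpha$ (resp.\ $p\mid\beta$) the second (resp.\ first) quotient forces $v_{p}(\kappa)\geq v_{p}(N)+v_{p}(\alpha)$ (resp.\ $+v_{p}(\beta)$), hence $v_{p}(x)>0$ and $v_{p}(x-1)=0$ for $x=\frac{\kappa\pi\rho}{N}$, after which the third, fourth, or fifth quotient has negative valuation. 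This eliminates the descent entirely, and your observation that the fifth quotient alone (together with the first two) already yields $\alpha=\beta=1$ is a genuine sharpening: the paper needs the third and fourth quotients for $\alpha\mid N$ and $\beta\mid 2N$ precisely because its descent requires $\alpha\beta$ to divide $N$, whereas your local argument only needs $v_{p}(\kappa)\geq 1$. You also correctly isolate where the factor $2$ in the fourth quotient matters (the residual case $v_{2}(\beta)=1$), which is the valuation-theoretic counterpart of the paper's parity argument for odd $N$. Your final step for $\kappa\mid N$, splitting on the sign of $v_{p}(x)$, is essentially the same $p$-adic computation the paper performs on $N+\frac{(\kappa\pi\rho)^{2}}{N}$, just organized more transparently. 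The one bookkeeping point worth making explicit in a write-up is that $v_{p}(N)\geq 0$ and $v_{p}(\pi),v_{p}(\rho)\geq 0$ throughout because these are integers (with $N,\kappa\neq 0$ implicit from the denominators); you use these inequalities silently in several places.
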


\begin{proof}
The integrality of the first two quotients, together with the co-primality conditions, imply that $\alpha\beta|\kappa$. Multiplying the third quotient by $N$, we find that $\alpha|N$ as well. If $N$ is even then multiplying the fourth quotient by $\frac{N}{2}$ yields the same assertion for $\beta$. If $N$ is odd then $\beta|2N$, and we claim that $\beta$ is odd. Indeed, multiplying the last quotient by $N^{2}$ produces a quotient whose numerator is odd if $\beta$ is even, and $\beta$ (and even $\beta^{2}$) appears in the denominator. Therefore $\alpha\beta|N$ as well. But then canceling $\alpha\beta$ from each expression involving $\frac{\kappa}{N}$ puts us back in the initial situation. Therefore $\alpha\beta$ divides both $\kappa$ and $N$ infinitely many times, so that it must be equal to 1. Substituting this and multiplying the last quotient by $N$, we find that $\kappa$ divides $N+\frac{(\kappa\pi\rho)^{2}}{N}$. Let $p$ be a prime, and assume that $v_{p}(\kappa)>v_{p}(N)$. But then $v_{p}\big(\frac{(\kappa\pi\rho)^{2}}{N}\big)>v_{p}(\kappa)>v_{p}(N)$, so that by adding $N$ we obtain a number whose $p$-adic valuation equals precisely $v_{p}(N)$. But such a number cannot be divisible by $\kappa$ if $v_{p}(\kappa)>v_{p}(N)$. This contradiction shows that $v_{p}(\kappa) \leq v_{p}(N)$ for every prime $p$, which amounts to $\kappa$ dividing $N$. This proves the lemma.
\end{proof}

We can now prove the desired assertion.
\begin{prop}
If conjugation by an element $A \in SL_{2}(\mathbb{R})$, written as in Lemma \ref{intent}, sends $\Gamma_{1}(N)$ into $\Gamma_{0}(N)$, then $A\in\Gamma_{0}^{*,s_{N}}(N)$. \label{ambgrp}
\end{prop}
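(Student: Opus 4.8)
The plan is to start from the integrality conclusions of Lemma~\ref{intent} and massage them into the shape required by Lemma~\ref{alphabeta1}, whose output is precisely the structural information I need. First I would record that $A$ has determinant $1$, so $eh-fg=1$; combined with the integrality of $e^{2}$, $\frac{g^{2}}{N}$, $h^{2}$, and $Nf^{2}$ from Lemma~\ref{intent}, the natural move is to write each entry as a rational multiple of a square root of a rational number. Concretely, since $e^{2}\in\mathbb{Z}$ and $h^{2}\in\mathbb{Z}$, I would set $e=\frac{\pi}{\beta}\sqrt{\kappa'}$-type expressions; more carefully, I expect to introduce positive integers $\alpha,\beta$ as the reduced denominators governing the diagonal and off-diagonal entries and integers $\pi,\rho,\kappa$ encoding the numerators, arranged so that the five hypotheses of Lemma~\ref{alphabeta1} become exactly the integrality statements already furnished (possibly after combining a few of the nine expressions of Lemma~\ref{intent}, e.g.\ using $N(eh+fg)$ together with $eh-fg=1$ to extract $2Nfg$ and hence information about the cross term $\frac{\kappa\pi\rho}{N}-1$).

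Once the bookkeeping is set up, I would invoke Lemma~\ref{alphabeta1} to conclude $\alpha=\beta=1$ and that $\kappa$ divides $N$. The vanishing of the denominators $\alpha,\beta$ should translate into the assertion that $e$, $g$, $f$, $h$ all have the restricted denominators permitted by Definition~\ref{Gamma0*sNdef}: namely that after writing $\kappa=\mu$ (an exact divisor, to be checked), the diagonal entries lie in $\frac{1}{s_{\mu}}\mathbb{Z}\cdot\sqrt{\mu}$ and the off-diagonal entries in $\frac{1}{s_{N/\mu}}\mathbb{Z}$ against the appropriate powers of $\sqrt{\mu}$. The divisibility $\kappa\mid N$ gives the scaling $\sqrt{\kappa}$, and I would then verify exactness of $\mu$ by a local (prime-by-prime) argument: for each $p\mid N$ I compare $v_{p}(\kappa)$ with $v_{p}(N)$ using the square-integrality conditions $e^{2},h^{2}\in\mathbb{Z}$ and $\frac{g^{2}}{N},Nf^{2}\in\mathbb{Z}$, which force the power of $p$ in $\kappa$ to be either $0$ or the full $v_{p}(N)$, precisely the definition of an exact divisor.

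The final step is to match the denominator bounds with the $s_{\mu}$ and $s_{N/\mu}$ prescribed in Definition~\ref{Gamma0*sNdef} and to confirm that the $SL_{2}$ relation $ad\mu-bc\frac{N}{\mu}=1$ reduces to $\det A=eh-fg=1$, which holds by assumption. Here I would lean on Lemma~\ref{nocanN}: the products $ab,ac,bd,cd$ (equivalently the original entrywise products of $A$) control exactly which powers of $p$ can appear in the denominators, and the no-cancellation presentation guarantees these denominators do not exceed $s_{\mu}$ and $s_{N/\mu}$. I expect the main obstacle to be the second step, the prime-by-prime extraction of the exact divisor $\mu$ from $\kappa$: the integrality conditions of Lemma~\ref{intent} constrain the \emph{squares} of the entries and a few mixed products, so disentangling how much of $\sqrt{\kappa}$ sits in the numerator versus the denominator of each entry---and showing the two denominators interact only through the single scaling $\sqrt{\mu}$---requires careful local analysis at each $p\mid N$, distinguishing the cases $v_{p}(N)$ odd versus even exactly as in Corollary~\ref{uniqpres}. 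The algebraic reduction feeding Lemma~\ref{alphabeta1} is routine once the correct substitution is chosen, so the genuine content lies in choosing that substitution and in the valuation argument that pins $\mu$ down as an exact divisor.
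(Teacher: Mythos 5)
Your overall strategy is the paper's: extract a parametrization of the entries from the integrality conditions of Lemma \ref{intent}, feed a gcd-based decomposition into Lemma \ref{alphabeta1} to kill the auxiliary denominators $\alpha,\beta$ and force $\kappa\mid N$, and then pin down the exact divisor by a local analysis. However, the criterion you propose for that local analysis is wrong and would derail the proof exactly in the non-square-free case, which is the whole point of the group $\Gamma_{0}^{*,s_{N}}(N)$. The identification ``$\kappa=\mu$ (an exact divisor, to be checked)'' together with the claim that the square-integrality conditions force $v_{p}(\kappa)\in\{0,v_{p}(N)\}$ is false: after Lemma \ref{alphabeta1} one only gets $N=\kappa\nu$, and the exact divisor $\mu$ is a proper piece of $\nu$, namely $\nu=\omega^{2}\mu$ where $\mu$ is supported on those primes $p$ for which $\frac{\gcd\{\pi,\rho\}^{2}}{\nu}$ is still divisible by $p$, and the leftover square $\omega^{2}$ is precisely what produces the denominators $s_{\mu}$ and $s_{N/\mu}$ of Definition \ref{Gamma0*sNdef}. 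Concretely, for $N=p^{4}$ and $A=\big(\begin{smallmatrix} 1 & 0 \\ p^{3} & 1\end{smallmatrix}\big)$ (which does conjugate $\Gamma_{1}(N)$ into $\Gamma_{0}(N)$) the procedure yields $\kappa=p^{2}$, which is neither $1$ nor $p^{4}$ and is not an exact divisor of $N$; the correct presentation has $\mu=1$, $\omega=p$, and $c=\frac{1}{p}$. So the ``$0$ or full $v_{p}(N)$'' dichotomy applies to $\mu$, not to $\kappa$, and the even leftover $2v_{p}(\omega)$ must be pushed into the denominators of $a,b,c,d$ --- this is the valuation argument you correctly flag as the crux, but your proposed route through it does not work.

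Two smaller issues: the case $g=0$ must be treated separately, since the parametrization $g=\pm\sqrt{Nt}$, $e=\frac{p}{\sqrt{Nt}}$ degenerates there (the paper disposes of it using $e^{2},h^{2}\in\mathbb{Z}$ together with $eh=1$, landing in $\mu=1$); and you cannot ``lean on Lemma \ref{nocanN}'' to control the denominators at the end, because that lemma is a statement about elements already known to lie in $\Gamma_{0}^{*,s_{N}}(N)$ --- invoking it here is circular. The bounds by $s_{\mu}$ and $s_{N/\mu}$ come instead directly from the integrality of $a^{2}\mu$, $d^{2}\mu$, $b^{2}\frac{N}{\mu}$, and $c^{2}\frac{N}{\mu}$, which are among the expressions supplied by Lemma \ref{intent} once the substitution is made.
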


\begin{proof}
Applying Lemma \ref{intent}, we can use the integrality of the expressions appearing in that lemma. If $g=0$ then the fact that $e^{2}$ and $h^{2}$ are integers and $eh=1$ (by the $SL_{2}$ condition) implies that $e=h=\pm1$. We have $f\in\frac{1}{N}\mathbb{Z}$ (since $Nfh\in\mathbb{Z}$), and the fact that $Nf^{2}$ is also integral implies that the denominator of the reduced form of $b=f$ must divide $s_{N}$. The assertion thus holds if $g=0$, with $\mu=1$.

We therefore assume that $g\neq0$. Using the integrality of $\frac{g^{2}}{N}\in\mathbb{Z}$ from Lemma \ref{intent}, and then of $eg$ and $2gh$, we find that
\begin{equation}
g=\pm\sqrt{Nt}\quad\mathrm{for\ some\ }t\in\mathbb{N},\quad\mathrm{as\ well \ as}\quad e=\frac{p}{\sqrt{Nt}}\quad\mathrm{and}\quad h=\frac{q}{2\sqrt{Nt}} \label{gehtpq}
\end{equation}
with integers $p$ and $q$. Since $h^{2}\in\mathbb{Z}$ as well, we obtain $4Nt|q^{2}$, so that $2|q$ and we can write $h=\frac{r}{\sqrt{Nt}}$ with $r\in\mathbb{Z}$.

The analysis will be easier if we separate divisors. Let $\delta=\gcd\{t,p,r\}>0$. Then the numbers $\alpha=\gcd\big\{\frac{t}{\delta},\frac{p}{\delta}\big\}$ and $\beta=\gcd\big\{\frac{t}{\delta},\frac{r}{\delta}\big\}$ are positive, co-prime, and both divide $\frac{t}{\delta}$. Hence the latter number is divisible by their product. We can thus write \[p=\delta\alpha\pi,\quad r=\delta\beta\rho,\quad\mathrm{and}\quad t=\delta\alpha\beta\tau,\quad\mathrm{with}\quad\tau\in\mathbb{N},\quad\pi\in\mathbb{Z},\quad\mathrm{and}\quad\rho\in\mathbb{Z},\] satisfying the co-primality conditions
\begin{equation}
\gcd\{\tau,\pi\rho\}=1,\quad\gcd\{\alpha,\beta\rho\}=1,\quad\mathrm{and}\quad\gcd\{\beta,\alpha\pi\}=1. \label{coprim}
\end{equation}
Substituting these values of $p$, $r$, and $t$, as well as $f=\frac{eh-1}{g}$ from the $SL_{2}$ condition, transforms and extends Equation \eqref{gehtpq} to
\begin{equation}
e=\frac{\pi\sqrt{\delta\alpha}}{\sqrt{N\beta\tau}},\quad g=\pm\sqrt{N\delta\alpha\beta\tau},\quad h=\frac{\rho\sqrt{\delta\beta}}{\sqrt{N\alpha\tau}},\quad\mathrm{and}\quad f=\pm\frac{\frac{\delta\pi\rho}{N\tau}-1}{\sqrt{N\delta\alpha\beta\tau}}, \label{eghf}
\end{equation}
where the two $\pm$ signs are the same.

Now, the numerator of $f$ must be integral (e.g., by the integrality of $Nf^{2}$ from Lemma \ref{intent}), so that the first co-primality condition in Equation \eqref{coprim} implies that $\tau|\delta$. We therefore write $\delta=\kappa\tau$ with $\kappa\in\mathbb{N}$, and observing that the values of $f$ and $g$ in Equation \eqref{eghf} involve the expression $\pm\sqrt{\delta\tau}=\pm\sqrt{\kappa}\cdot\tau$, we can remove the assumption $\tau>0$ and absorb the sign into $\tau$. Now, the numbers from Lemma \ref{alphabeta1} are $e^{2}$, $h^{2}$, $Nfh\tau$, $2Nef\tau$, and $Nf^{2}\tau^{2}$, which are all integral by Lemma \ref{intent}. Applying Lemma \ref{alphabeta1}, we get $\alpha=\beta=1$ and write $N$ as $\kappa\nu$, and Equation \eqref{eghf} takes the form
\begin{equation}
e=\frac{\pi}{\sqrt{\nu}},\quad g=\tau\kappa\sqrt{\nu},\quad h=\frac{\rho}{\sqrt{\nu}},\quad\mathrm{and}\quad f=\frac{\frac{\pi\rho}{\nu}-1}{\tau\kappa\sqrt{\nu}}. \label{withnu}
\end{equation}

We now use the integrality of $e^{2}$ and $h^{2}$ from Equation \eqref{withnu} to deduce that $\nu$ divides the square of $\gcd\{\pi,\rho\}$. Let $J$ be the set of primes $p$ diving $N$ which still divide $\frac{\gcd\{\pi,\rho\}^{2}}{\nu}$. The prime divisors of $\kappa$ are not in $J$, since the integrality of $\kappa\nu b^{2}\tau^{2}$ from Lemma \ref{intent} (recall the decomposition of $N$) implies that $\kappa$ divides the square of $\frac{\pi\rho}{\nu}-1$, and the latter number is congruent to $-1$ modulo any prime lying in $J$. Therefore the divisor $\mu=\prod_{p \in J}p^{v_{p}(N)}$ divides $\nu$, and the quotient is a square $\omega^{2}$ since $v_{p}(\nu)=2v_{p}\big(\gcd\{\pi,\rho\}\big)$ for any prime divisor $p$ of $N$ not lying in $J$. The divisor $\mu$ is clearly exact, and by substituting $\nu=\omega^{2}\mu$ and $\kappa=\frac{N}{\omega^{2}\mu}$ in Equation \eqref{withnu} we find that $A$ has the form from Definition \ref{Gamma0*sNdef} with the rational numbers $a=\frac{\pi}{\omega\mu}$, $c=\frac{\tau}{\omega}$, $d=\frac{\rho}{\omega\mu}$, and $b=\frac{\omega\mu}{N\tau}\big(\frac{\pi\rho}{\nu}-1\big)$. The expressions involving squares which must be integral by Lemma \ref{intent} are $a^{2}\mu$, $d^{2}\mu$, $c^{2}\frac{N}{\mu}$, and $b^{2}\frac{N}{\mu}$, which shows that $a$, $b$, $c$, and $d$ must satisfy the conditions from Definition \ref{Gamma0*sNdef}. This completes the proof of the proposition.
\end{proof}

\smallskip

The main technical tool for determining normalizers is the following refinement of Propositions \ref{ambgrp} and \ref{1stgrp}.
\begin{lem}
Let $H$ be a subgroup of $(\mathbb{Z}/N\mathbb{Z})^{\times}$, take some $A\in\Gamma_{0}^{*,s_{N}}(N)$, and present it as in Definition \ref{Gamma0*sNdef}. Then $A$ normalizes $\Gamma_{H}$ if and only if the following conditions are satisfied:
\begin{enumerate}[$(i)$]
\item The denominators of $ab$ and $cd$ divide any difference $e-h$ for an element $\gamma=\big(\begin{smallmatrix} e & f \\ g & h\end{smallmatrix}\big)$ in $\Gamma_{H}$.
\item $H$ contains the kernel of the projection onto $(\mathbb{Z}/K\mathbb{Z})^{\times}$, where $\frac{N}{K}$ is the least common multiple of the denominators of $ac$ and $bd$.
\item For any $\gamma$ as in condition $(i)$, adding $\pm bc\frac{N}{\mu}(e-h)$ to the diagonal elements gives again elements of $H$.
\end{enumerate} \label{detnorm}
\end{lem}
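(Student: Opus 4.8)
The plan is to reduce the biconditional to a statement about one-sided containment and then read off the three conditions from the explicit conjugation formula in Equation \eqref{conjform}. First I would observe that it suffices to characterize when $A\Gamma_{H}A^{-1}\subseteq\Gamma_{H}$: since conjugation by an element of $SL_{2}(\mathbb{R})$ preserves covolume, the lattice $A\Gamma_{H}A^{-1}$, if contained in the lattice $\Gamma_{H}$, must have index equal to the ratio of covolumes, namely $1$, so containment already forces equality. Thus ``$A$ normalizes $\Gamma_{H}$'' is equivalent to ``$A\gamma A^{-1}\in\Gamma_{H}$ for every $\gamma=\big(\begin{smallmatrix}e&f\\g&h\end{smallmatrix}\big)\in\Gamma_{H}$'', and I can analyze this membership entry by entry using \eqref{conjform}.

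Next I would sort out integrality, which yields condition $(i)$. Using $a,d\in\frac{1}{s_{\mu}}\mathbb{Z}$, $b,c\in\frac{1}{s_{N/\mu}}\mathbb{Z}$, the factorization $s_{N}=s_{\mu}s_{N/\mu}$, and $N\mid g$, the quantities $a^{2}\mu f$, $\frac{b^{2}}{\mu}g$, $d^{2}\mu g$, and $c^{2}\frac{N^{2}}{\mu}f$ appearing in the off-diagonal entries of \eqref{conjform} are automatically integral (the latter two even divisible by $N$), and both diagonal entries are automatically integral. The only obstruction to $A\gamma A^{-1}\in\Gamma_{0}(N)$ therefore comes from the terms $ab(e-h)$ and $cdN(e-h)$: the top-right entry is integral exactly when $\mathrm{denom}(ab)\mid(e-h)$, and the bottom-left entry lies in $N\mathbb{Z}$ exactly when $\mathrm{denom}(cd)\mid(e-h)$. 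Imposing this for all $\gamma\in\Gamma_{H}$ is precisely condition $(i)$, and these products are presentation-independent by Lemma \ref{nocanN}.

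Having secured $A\gamma A^{-1}\in\Gamma_{0}(N)$, I would compute its class in $(\mathbb{Z}/N\mathbb{Z})^{\times}$. The key point is that the two ``cross'' terms $bdg$ and $acNf$ in the top-left entry are multiples of $\frac{N}{\mathrm{denom}(bd)}$ and $\frac{N}{\mathrm{denom}(ac)}$ respectively, hence of $K=\frac{N}{\mathrm{lcm}(\mathrm{denom}(ac),\mathrm{denom}(bd))}$, so that modulo $K$ the top-left entry reduces, via $ad\mu-bc\frac{N}{\mu}=1$, to $e+bc\frac{N}{\mu}(e-h)$. Condition $(ii)$ says exactly that membership in $H$ depends only on the class modulo $K$, and condition $(iii)$ says this reduced class lies in $H$; together they give $A\gamma A^{-1}\in\Gamma_{H}$, proving the ``if'' direction. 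For ``only if'' I would run this in reverse: $(i)$ follows from integrality as above; for $(ii)$ I would feed the generators $\big(\begin{smallmatrix}1&0\\Nk&1\end{smallmatrix}\big)$ and $\big(\begin{smallmatrix}1&k\\0&1\end{smallmatrix}\big)$ of $\Gamma_{1}(N)\subseteq\Gamma_{H}$ through \eqref{conjform}, whose conjugates have classes $1+bdNk$ and $1-acNk$; as $k$ varies these generate the kernel of reduction to $(\mathbb{Z}/K\mathbb{Z})^{\times}$, which must therefore lie inside $H$; and $(iii)$ then follows from the same reduction modulo $K$ applied to a general $\gamma$.

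The main obstacle I expect is the denominator bookkeeping: tracking exactly which terms in \eqref{conjform} are integral, which are divisible by $N$, and which vanish modulo $K$, all expressed through the presentation-independent products $ab,cd,ac,bd$ and the summand $bc\frac{N}{\mu}$ from Lemma \ref{nocanN}. In particular, the identification of $K$ with $\frac{N}{\mathrm{lcm}(\mathrm{denom}(ac),\mathrm{denom}(bd))}$, together with the verification that the classes of the conjugated $\Gamma_{1}(N)$-generators generate precisely $\ker\!\big((\mathbb{Z}/N\mathbb{Z})^{\times}\to(\mathbb{Z}/K\mathbb{Z})^{\times}\big)$, is the step requiring the most care.
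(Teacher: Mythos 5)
Your proof is correct and follows essentially the same route as the paper: it reads conditions $(i)$--$(iii)$ off the entries of the conjugation formula \eqref{conjform}, with $(i)$ governing integrality of the off-diagonal entries, $(ii)$ (via its reformulation as invariance of $H$ under translation by $K$) absorbing the cross terms $bdg$ and $acfN$, and $(iii)$ handling the residual diagonal shift $\pm bc\frac{N}{\mu}(e-h)$. The two points where you are more explicit than the paper --- the covolume argument reducing normalization to one-sided containment, and the verification via the conjugates of $\big(\begin{smallmatrix}1&k\\0&1\end{smallmatrix}\big)$ and $\big(\begin{smallmatrix}1&0\\Nk&1\end{smallmatrix}\big)$ that the kernel of reduction modulo $K$ is exactly what is forced into $H$ --- are both sound and fill genuine glosses in the published argument.
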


\begin{rmk}
Condition $(ii)$ in Lemma \ref{detnorm} is equivalent to $H$ being invariant under additive translations by multiples of $K$. Indeed, the fact that $\frac{N}{K}\big|s_{N}$ shows that the primes dividing $N$ already divide $K$, so that such translations do not affect co-primality to $N$. As such a translation takes an element of $(\mathbb{Z}/N\mathbb{Z})^{\times}$ to its image under multiplication by an element of the kernel of the projection from that condition, this indeed proves the claim. \label{transK}
\end{rmk}

\begin{proof}
Consider the matrix from Equation \eqref{conjform}, in which we assume that the matrix $\gamma=\big(\begin{smallmatrix} e & f \\ g & h\end{smallmatrix}\big)$ lies in $\Gamma_{H}$. Condition $(i)$ is equivalent to the upper right entry there being integral, and to the lower left entry there being in $N\mathbb{Z}$. The interpretation of condition $(ii)$ given in Remark \ref{transK} means, as the proof of Proposition \ref{1stgrp} shows, that conjugation by $A$ takes $\Gamma_{1}(N)$ into $\Gamma_{H}$. When these two conditions are satisfied, then the diagonal entries of the matrix from Equation \eqref{conjform} differ from that of $A$ by $\pm bc\frac{N}{\mu}(e-h)$, up to expressions which are dealt with in condition $(ii)$. Hence condition $(iii)$ is equivalent, under the other two conditions, to normalizing $\Gamma_{H}$. This proves the lemma.
\end{proof}

\smallskip

Using the groups $\Gamma_{0}^{*,\sigma}(N)$ from Definition \ref{Gamma0*sigmadef}, we can rephrase Lemma \ref{detnorm} in the following way.
\begin{cor}
For a subgroup $H$ of $(\mathbb{Z}/N\mathbb{Z})^{\times}$ denote by $K_{H}$ the minimal multiple of $s_{N}t_{N}$ such that the kernel of the projection from $(\mathbb{Z}/N\mathbb{Z})^{\times}$ to $(\mathbb{Z}/K_{H}\mathbb{Z})^{\times}$ is contained in $H$. Let $\sigma_{H}=\gcd\big\{\frac{N}{K_{H}},\eta_{H}\big\}$, where $\eta_{H}$ is the $\gcd$ of all the differences $e-h$ where $e$ and $h$ are integers that map to elements of $H$ that are mutual inverses. Then the normalizer of $\Gamma_{H}$ is contained in $\Gamma_{0}^{*,\sigma_{H}}(N)$. More precisely, this normalizer consists of those elements $\big(\begin{smallmatrix} a\sqrt{\mu} & b/\sqrt{\mu} \\ c\frac{N}{\mu}\sqrt{\mu} & d\sqrt{\mu}\end{smallmatrix}\big)$ of $\Gamma_{0}^{*,\sigma_{H}}(N)$ such that if $e$ and $h$ are represent elements of $H$ that are inverse to each other then the residues of $ade\mu-bch\frac{N}{\mu}$ and $adh\mu-bce\frac{N}{\mu}$ modulo $N$ are also elements of $H$ with that property. \label{inGammasigK}
\end{cor}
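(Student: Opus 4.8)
The plan is to read off the three conditions of Lemma \ref{detnorm} and repackage them: first the two that carve out $\Gamma_0^{*,\sigma_H}(N)$, and then the one that selects the normalizer inside it. Since every element normalizing $\Gamma_H$ conjugates the subgroup $\Gamma_1(N)\subseteq\Gamma_H$ into $\Gamma_H\subseteq\Gamma_0(N)$, Proposition \ref{ambgrp} guarantees that the normalizer is contained in $\Gamma_0^{*,s_N}(N)$. Hence Lemma \ref{detnorm} applies to each of its elements, and it suffices to rewrite conditions $(i)$--$(iii)$ in terms of $K_H$, $\eta_H$, and $\sigma_H$.

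First I would record that the differences $e-h$ occurring in condition $(i)$ range exactly over pairs of integers reducing to inverse elements of $H$: every $\gamma=\big(\begin{smallmatrix} e & f \\ g & h\end{smallmatrix}\big)\in\Gamma_H$ has $N\mid g$, hence $eh\equiv1\pmod N$ with $e\in H$, and conversely any inverse pair $e,h=e^{-1}$ in $H$ is realized by $\big(\begin{smallmatrix} e & (eh-1)/N \\ N & h\end{smallmatrix}\big)$. Thus condition $(i)$ says precisely that the denominators of $ab$ and $cd$ divide $\eta_H$. For condition $(ii)$ I would note that, in a presentation satisfying Lemma \ref{nocanN}, the denominators of $ac$ and $bd$ divide $s_{\mu}s_{N/\mu}=s_N$, so the modulus $K$ with $\tfrac{N}{K}=\operatorname{lcm}$ of these denominators is a divisor of $N$ that is a multiple of $s_Nt_N$; since the kernels of the projections $(\mathbb{Z}/N\mathbb{Z})^{\times}\to(\mathbb{Z}/K\mathbb{Z})^{\times}$ are nested by divisibility of $K$, condition $(ii)$ is equivalent to $K_H\mid K$, i.e. to $\operatorname{lcm}$ of the denominators of $ac$ and $bd$ dividing $\tfrac{N}{K_H}$.

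Combining the two, I would invoke Remark \ref{Gamma*sigmaeq}: the denominators of $ab,cd$ and those of $ac,bd$ impose the same divisibility condition, so letting $\delta$ be the least positive integer with $A\in\Gamma_0^{*,\delta}(N)$, conditions $(i)$ and $(ii)$ read $\delta\mid\eta_H$ and $\delta\mid\tfrac{N}{K_H}$, that is $\delta\mid\sigma_H$. By Definition \ref{Gamma0*sigmadef} this is exactly $A\in\Gamma_0^{*,\sigma_H}(N)$, which yields the containment of the normalizer in $\Gamma_0^{*,\sigma_H}(N)$ and reduces the problem to characterizing condition $(iii)$ among such $A$.

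Finally I would translate condition $(iii)$ using Equation \eqref{conjform}. The relation $ad\mu-bc\tfrac{N}{\mu}=1$ from Definition \ref{Gamma0*sNdef} makes $ade\mu-bch\tfrac{N}{\mu}$ and $adh\mu-bce\tfrac{N}{\mu}$ integral, and Equation \eqref{conjform} exhibits them as the two diagonal entries of $A\gamma A^{-1}$ up to the additive terms $bdg-acfN$ and $acfN-bdg$. Because $N\mid g$ while the denominators of $ac$ and $bd$ divide $\tfrac{N}{K}$, each of these extra terms is a multiple of $K$; by the reformulation of condition $(ii)$ in Remark \ref{transK}, adding a multiple of $K$ amounts to multiplying by an element of the kernel, which lies in $H$. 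Hence $A\gamma A^{-1}\in\Gamma_H$ if and only if $ade\mu-bch\tfrac{N}{\mu}\in H$, and since $A\gamma A^{-1}$ has determinant $1$ the companion entry $adh\mu-bce\tfrac{N}{\mu}$ is then the inverse element of $H$; this is exactly the stated criterion. The main obstacle, and the step I would treat most carefully, is this last reduction: proving that $bdg-acfN$ is divisible by $K$ (so condition $(iii)$ depends only on the clean residues $ade\mu-bch\tfrac{N}{\mu}$), together with verifying that the interchangeability of the ``$ab,cd$'' and ``$ac,bd$'' denominators really lets conditions $(i)$ and $(ii)$ collapse into the single membership $A\in\Gamma_0^{*,\sigma_H}(N)$.
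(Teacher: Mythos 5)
Your route is essentially the paper's: you feed Proposition \ref{ambgrp} into Lemma \ref{detnorm}, identify condition $(i)$ with the denominators of $ab$ and $cd$ dividing $\eta_{H}$ and condition $(ii)$ with them dividing $\frac{N}{K_{H}}$, merge the two via Remark \ref{Gamma*sigmaeq} into membership in $\Gamma_{0}^{*,\sigma_{H}}(N)$, and then reduce condition $(iii)$ to the residues $X'=ade\mu-bch\frac{N}{\mu}$ and $Y'=adh\mu-bce\frac{N}{\mu}$ by showing the remaining diagonal terms $\pm(bdg-acfN)$ are multiples of $K$ and invoking Remark \ref{transK}. All of that matches the paper's proof and is sound.

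The one genuine gap is your final claim that ``since $A\gamma A^{-1}$ has determinant $1$ the companion entry $adh\mu-bce\frac{N}{\mu}$ is then the inverse element of $H$.'' The determinant argument applies to the \emph{actual} diagonal entries $X=X'+t$ and $Y=Y'-t$ of $A\gamma A^{-1}$, with $t=bdg-acfN$, and yields $XY\equiv1\pmod{N}$; but you only establish $K\mid t$, so from $X'Y'=XY+t(X'-Y')+t^{2}$ you get $X'Y'\equiv1$ modulo $K$, not modulo $N$, whereas ``inverse elements of $H\subseteq(\mathbb{Z}/N\mathbb{Z})^{\times}$'' requires the product to be $1$ modulo $N$. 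Without this, the stated condition is not yet shown to be equivalent to normalizing (a priori it could even be unsatisfiable). The paper closes exactly this point by computing $X'Y'=eh-ad\mu\cdot bc\frac{N}{\mu}\,(e-h)^{2}$ directly and noting that $ad\mu\cdot bc\frac{N}{\mu}=N\,abcd$ lies in $\frac{N}{\sigma_{H}^{2}}\mathbb{Z}$ by Remark \ref{Gamma*sigmaeq} while $\sigma_{H}\mid e-h$, so the correction term is a multiple of $N$. Your determinant route can be repaired with the same divisibility facts---one has $t\in\frac{N}{\sigma_{H}}\mathbb{Z}$ (since $N\mid g$ and $\sigma_{H}bd,\sigma_{H}ac\in\mathbb{Z}$), $\sigma_{H}\mid X'-Y'$ (since $X'-Y'=(e-h)(ad\mu+bc\frac{N}{\mu})$), and $\sigma_{H}^{2}\mid N$---but as written the step is unjustified.
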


\begin{proof}
We first observe that $K_{H}$ is a well-defined divisor of $N$ (since $H$ contains the trivial subgroup of $(\mathbb{Z}/N\mathbb{Z})^{\times}$), and that for divisors $K$ of $N$ which are divisible by $s_{N}t_{N}$ the kernel of the map to $(\mathbb{Z}/K\mathbb{Z})^{\times}$ determines $K$. This is easily seen through the fact that $\frac{\varphi(N)}{\varphi(K)}=\frac{N}{K}$ for such divisors. Since we consider only elements of $\Gamma_{0}^{*,s_{N}}(N)$ (so that any $\sigma$ is a divisor of $s_{N}$), condition $(ii)$ of Lemma \ref{detnorm} is satisfied precisely for elements which lie in $\Gamma_{0}^{*,N/K_{H}}(N)$. It is now clear that $\eta_{H}|N$ (since $\big(\begin{smallmatrix} 1+N & 1 \\ N & 1\end{smallmatrix}\big)\in\Gamma_{1}(N)$), and that condition $(i)$ of Lemma \ref{detnorm} is satisfied for an element of $\Gamma_{0}^{*,N/K_{H}}(N)$ if and only if that element is in $\Gamma_{0}^{*,\sigma_{H}}(N)$. Now, the product of $ade\mu-bch\frac{N}{\mu}$ and $adh\mu-bce\frac{N}{\mu}$ is $eh-(e-h)^{2}ad\mu \cdot bc\frac{N}{\mu}$, which is 1 plus a multiple of $N$ plus a multiple of $N\frac{(e-h)^{2}}{\sigma_{H}^{2}}$ if $\big(\begin{smallmatrix} a\sqrt{\mu} & b/\sqrt{\mu} \\ c\frac{N}{\mu}\sqrt{\mu} & d\sqrt{\mu}\end{smallmatrix}\big)\in\Gamma_{0}^{*,\sigma_{H}}(N)$ by Remark \ref{Gamma*sigmaeq}. As $\sigma_{H}|e-h$, the asserted elements are indeed inverses. Since the terms that do not involve $ad\mu$ or $bc\frac{N}{\mu}$ in the diagonal entries in Equation \eqref{conjform} are multiples of $\frac{N}{\sigma_{H}}\big|K_{H}$ and $H$ is invariant under such translations (see Remark \ref{transK}), the latter assertion is equivalent to condition $(iii)$ of Lemma \ref{detnorm}. This proves the corollary.
\end{proof}

As mentioned in the Introduction, Theorem 2.1 of \cite{[IJK]} also gives a criterion for an element of $SL_{2}(\mathbb{R})$ (normalized differently, to land in $PGL_{2}^{+}(\mathbb{Q})$) to normalize $\Gamma_{H}$. In our language that theorem amounts to allowing one to restrict attention only to elements of $\Gamma_{0}^{*,\gcd\{s_{N},\eta_{H}\}}(N)$, a result which is clearly implied by Corollary \ref{inGammasigK} since $\frac{N}{K_{H}}\big|s_{N}$.

Corollary \ref{inGammasigK} presents the maximal $\sigma$ such that the normalizer of $\Gamma_{H}$ is contained in $\Gamma_{0}^{*,\sigma}(N)$. In the other direction, we will be interested to know when the smallest such group, namely the Atkin--Lehner group $\Gamma_{0}^{*}(N)$, is contained in that normalizer. For this we can prove the following result.
\begin{prop}
\begin{enumerate}[$(i)$]
\item The group $\Gamma_{0}^{*}(N)$ normalizes both $\Gamma_{0}(N)$ and $\Gamma_{1}(N)$. It operates on the quotient $(\mathbb{Z}/N\mathbb{Z})^{\times}$ via the quotient $\{\pm1\}^{\{p|N\}}$, in an explicit way.
\item $\Gamma_{0}^{*}(N)$ normalizes $\Gamma_{H}$ for a subgroup $H\subseteq(\mathbb{Z}/N\mathbb{Z})^{\times}$ if and only if $H$ is preserved under this action of $\{\pm1\}^{\{p|N\}}$.
\end{enumerate} \label{ALact}
\end{prop}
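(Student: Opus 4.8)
The plan is to read everything off the conjugation formula in Equation \eqref{conjform}, specialized to $A\in\Gamma_{0}^{*}(N)$, where $a,b,c,d$ are all integers and (by part $(ii)$ of Proposition \ref{Gamma0*sigma}) the exact divisor $\mu$ attached to $A$ is unique. For part $(i)$ I would first fix $\gamma=\big(\begin{smallmatrix} e & f \\ g & h\end{smallmatrix}\big)\in\Gamma_{0}(N)$ (so $e,f,h\in\mathbb{Z}$ and $N\mid g$) and check directly that every entry of the matrix in \eqref{conjform} is integral and that the lower left entry lies in $N\mathbb{Z}$: the upper right entry is a $\mathbb{Z}$-combination of $f$, $e-h$, and $\frac{g}{\mu}\in\frac{N}{\mu}\mathbb{Z}$, while the lower left entry is visibly divisible by $N$ since $N\mid g$, $N\mid N(e-h)$, and $\mu\mid N$ makes $\frac{N^{2}}{\mu}\in N\mathbb{Z}$. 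As $\Gamma_{0}^{*}(N)$ is a group it is closed under inversion, so applying the same to $A^{-1}$ upgrades $A\Gamma_{0}(N)A^{-1}\subseteq\Gamma_{0}(N)$ to an equality, giving that $A$ normalizes $\Gamma_{0}(N)$. For $\Gamma_{1}(N)$ I would repeat the computation modulo $N$: when $e\equiv h\equiv1$, the two diagonal entries in \eqref{conjform} are each congruent to $ad\mu-bc\frac{N}{\mu}=1$, so $A\gamma A^{-1}\in\Gamma_{1}(N)$, yielding the second normalization.

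Because $(\mathbb{Z}/N\mathbb{Z})^{\times}\cong\Gamma_{0}(N)/\Gamma_{1}(N)$ is abelian, conjugation by $\Gamma_{0}(N)$ acts trivially on it, so the conjugation action of $\Gamma_{0}^{*}(N)$ factors through $\Gamma_{0}^{*}(N)/\Gamma_{0}(N)$, which is exactly the quotient $\{\pm1\}^{\{p|N\}}$ furnished by the $\sigma=1$ case of the homomorphism built in the proof of Proposition \ref{Gamma0*sigma}$(iii)$ (whose kernel is $\Gamma_{0}(N)$, i.e.\ the elements with $\mu=1$). To make the action explicit I would represent the class of $\gamma$ by its lower right entry $h$ and compute the new lower right entry of \eqref{conjform} modulo $N$, namely $adh\mu-bce\frac{N}{\mu}$, separately modulo the coprime factors $\mu$ and $\frac{N}{\mu}$. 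The relation $ad\mu-bc\frac{N}{\mu}=1$ gives $ad\mu\equiv1\pmod{N/\mu}$ and $-bc\frac{N}{\mu}\equiv1\pmod{\mu}$, so under the CRT decomposition $(\mathbb{Z}/N\mathbb{Z})^{\times}\cong(\mathbb{Z}/\mu\mathbb{Z})^{\times}\times(\mathbb{Z}/\frac{N}{\mu}\mathbb{Z})^{\times}$ the new class is $(h^{-1}\bmod\mu,\ h\bmod\frac{N}{\mu})$. In other words the exact divisor $\mu$ acts by inverting precisely the components at the primes dividing $\mu$; this is an involutive, commuting family indexed by $\{p|N\}$, matching the group law on exact divisors from Lemma \ref{pm1pdivN}$(i)$, which confirms it is a genuine action of $\{\pm1\}^{\{p|N\}}$.

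Part $(ii)$ should then be formal. Since $\Gamma_{1}(N)\subseteq\Gamma_{H}\subseteq\Gamma_{0}(N)$ with $\Gamma_{H}/\Gamma_{1}(N)=H$, and $A$ normalizes both outer groups by part $(i)$, conjugation sends $\Gamma_{H}$ to the preimage in $\Gamma_{0}(N)$ of $\bar{A}\cdot H$, where $\bar{A}$ is the image of $A$ in $\{\pm1\}^{\{p|N\}}$ acting as above. Hence $A$ normalizes $\Gamma_{H}$ iff $\bar{A}$ stabilizes $H$, and since $A\mapsto\bar{A}$ is surjective onto $\{\pm1\}^{\{p|N\}}$, the whole group $\Gamma_{0}^{*}(N)$ normalizes $\Gamma_{H}$ iff $H$ is preserved under the full action. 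Equivalently, one may feed the computation of $adh\mu-bce\frac{N}{\mu}$ and $ade\mu-bch\frac{N}{\mu}$ into Corollary \ref{inGammasigK}, whose inverse-pair condition for $\sigma=1$ collapses to exactly this invariance of $H$.

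The genuinely substantive step is the explicit identification in the second paragraph: carrying out the CRT computation with the relation $ad\mu-bc\frac{N}{\mu}=1$ to recognize the action as partial inversion, and confirming that this family is involutive and commuting so that it is a bona fide action of $\{\pm1\}^{\{p|N\}}$ compatible with the group law on exact divisors. Well-definedness is painless here, since for $\sigma=1$ the divisor $\mu$ attached to $A$ is unique by part $(ii)$ of Proposition \ref{Gamma0*sigma}, so no presentation ambiguity intervenes. Everything else is bookkeeping with \eqref{conjform} and the correspondence between intermediate groups and subgroups of $(\mathbb{Z}/N\mathbb{Z})^{\times}$.
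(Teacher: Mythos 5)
Your proposal is correct and follows essentially the same route as the paper: the author likewise reads normalization of $\Gamma_{0}(N)$ and $\Gamma_{1}(N)$ off Equation \eqref{conjform}, invokes the abelianness of $\Gamma_{0}(N)/\Gamma_{1}(N)$ to factor the action through $\{\pm1\}^{\{p|N\}}$, and identifies the action of the exact divisor $\mu$ as fixing the residue modulo $\frac{N}{\mu}$ while inverting it modulo $\mu$ --- exactly your CRT computation from $ad\mu\equiv1\ (\mathrm{mod}\ \frac{N}{\mu})$ and $-bc\frac{N}{\mu}\equiv1\ (\mathrm{mod}\ \mu)$. Your write-up merely supplies more detail than the paper's terse ``part $(ii)$ immediately follows,'' and the extra care about uniqueness of $\mu$ via Proposition \ref{Gamma0*sigma}$(ii)$ is a harmless bonus.
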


\begin{proof}
The fact that $\Gamma_{0}^{*}(N)$ normalizes both groups follows easily from Lemma \ref{detnorm}, or directly from the formula in Equation \eqref{conjform} (one can also observe that the kernel of the projection onto $\{\pm1\}^{\{p|N\}}$ is precisely $\Gamma_{0}(N)$). As $\Gamma_{0}(N)/\Gamma_{1}(N)$ is abelian, the action of $\Gamma_{0}^{*}(N)$ is via the quotient $\{\pm1\}^{\{p|N\}}$. Equation \eqref{conjform} shows that elements of $\Gamma_{0}^{*}(N)$ that are associated to the exact divisor $\mu$ of $N$ take an element $t\in(\mathbb{Z}/N\mathbb{Z})^{\times}$ to the element of $(\mathbb{Z}/N\mathbb{Z})^{\times}$ that is congruent to $t$ modulo $\frac{N}{\mu}$, but whose residue modulo $\mu$ is the one which is inverse to $t$. This proves part $(i)$, and part $(ii)$ immediately follows from it. This proves the proposition.
\end{proof}

Two types of natural subgroups of $(\mathbb{Z}/N\mathbb{Z})^{\times}$ that are of particular interest are the following ones. In case $H$ is the kernel of the projection onto $(\mathbb{Z}/D\mathbb{Z})^{\times}$ for a divisor $D$ of $N$, the group $\Gamma_{H}$ is the intersection $\Gamma_{0}(N)\cap\Gamma_{1}(D)$, which we denote $\Gamma_{0,1}(N,D)$. In particular $\Gamma_{0,1}(N,1)=\Gamma_{0}(N)$ and $\Gamma_{0,1}(N,N)=\Gamma_{1}(N)$. On the other hand, recall that the cardinality of $(\mathbb{Z}/N\mathbb{Z})^{\times}$ is given by \emph{Euler's totient function} $\varphi(N)=N\prod_{p|N}\big(1-\frac{1}{p}\big)$, while the exponent of that group is given by \emph{Carmichael's function} $\lambda(N)$. The value of the latter function is $\mathrm{lcm}\big\{\lambda(p^{v_{p}(N)})\big|p|N\big\}$, where on prime powers $p^{v_{p}(N)}$ Carmichael's function $\lambda$ coincides with $\varphi$, unless $p=2$ and $v_{2}(N)\geq3$ where it coincides with $\frac{\varphi}{2}$. Let $m$ be a divisor of $\lambda(N)$, and take $H=(\mathbb{Z}/N\mathbb{Z})^{\times}[m]$ to be the subgroup consisting of those elements of $(\mathbb{Z}/N\mathbb{Z})^{\times}$ whose order divides $m$. We denote the associated group $\Gamma_{H}$ by $\Gamma_{1}^{[m]}(N)$, so that $\Gamma_{1}^{[1]}(N)$ and $\Gamma_{1}^{[\lambda(N)]}(N)$ are just $\Gamma_{1}(N)$ and $\Gamma_{0}(N)$ respectively once more, and $\Gamma_{1}^{[2]}(N)$ is the group denoted $\Gamma_{1}^{\sqrt{1}}(N)$ in \cite{[LZ]}. Proposition \ref{ALact} therefore yields the following assertions:
\begin{cor}
$\Gamma_{0}^{*}(N)$ is contained in the normalizers of the following intermediate groups:
\begin{enumerate}[$(i)$]
\item $\Gamma_{1}^{[m]}(N)$ for any $m|\lambda(N)$.
\item $\Gamma_{0,1}(N,D)$ for any $D|N$.
\item $\Gamma_{H}$ for any subgroup $H$ of $(\mathbb{Z}/N\mathbb{Z})^{\times}[2]$.
\item Any group that is generated by groups of the sort considered in parts $(i)$, $(ii)$, and $(iii)$.
\end{enumerate} \label{ALinnorm}
\end{cor}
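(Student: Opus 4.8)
The plan is to derive everything from Proposition \ref{ALact}, whose part $(ii)$ asserts that $\Gamma_{0}^{*}(N)$ normalizes $\Gamma_{H}$ precisely when $H$ is preserved under the action of $\{\pm1\}^{\{p|N\}}$ described in part $(i)$. So for each of the three basic families I only need to check this stability. The organizing observation I would use is a repackaging of the explicit action in Proposition \ref{ALact}$(i)$: under the Chinese Remainder isomorphism $(\mathbb{Z}/N\mathbb{Z})^{\times}\cong\prod_{p|N}(\mathbb{Z}/p^{v_{p}(N)}\mathbb{Z})^{\times}$, the element attached to an exact divisor $\mu$ inverts exactly the components with $p|\mu$ and fixes the others. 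Indeed, being congruent to $t$ modulo $\frac{N}{\mu}$ and to $t^{-1}$ modulo $\mu$ says exactly that the components away from $\mu$ are fixed while those dividing $\mu$ are inverted. In particular the action is by group automorphisms, and it is componentwise inversion on the CRT factors.

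For part $(i)$, I would note that $H=(\mathbb{Z}/N\mathbb{Z})^{\times}[m]$ is characteristic: if $x^{m}=1$ then $\phi(x)^{m}=\phi(x^{m})=1$ for any automorphism $\phi$, so the $m$-torsion is carried into itself, and stability is immediate. For part $(ii)$, I would observe that the kernel of the projection onto $(\mathbb{Z}/D\mathbb{Z})^{\times}$ decomposes under the CRT isomorphism as a product $\prod_{p|N}H_{p}$ of local subgroups, namely the kernel of $(\mathbb{Z}/p^{v_{p}(N)}\mathbb{Z})^{\times}\to(\mathbb{Z}/p^{v_{p}(D)}\mathbb{Z})^{\times}$ at primes $p|D$ and the full local factor at primes $p|N$ with $p\nmid D$. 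Since inverting a component of an abelian group maps every subgroup to itself (subgroups are closed under inverses), componentwise inversion on the primes dividing $\mu$ preserves such a product, so $H$ is stable. For part $(iii)$, every $t\in H\subseteq(\mathbb{Z}/N\mathbb{Z})^{\times}[2]$ satisfies $t=t^{-1}$, so the action, being a mixture of $t$ and $t^{-1}$ on the various components, fixes $H$ pointwise and a fortiori stabilizes it.

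Finally, part $(iv)$ would follow from two facts: that the action is by automorphisms, and that the correspondence $H\leftrightarrow\Gamma_{H}$ turns the subgroup $\langle H_{1},\ldots,H_{k}\rangle$ of $(\mathbb{Z}/N\mathbb{Z})^{\times}$ generated by the $H_{i}$ into the group generated by $\Gamma_{H_{1}},\ldots,\Gamma_{H_{k}}$. If each $H_{i}$ is stable under the action, then so is $\langle H_{1},\ldots,H_{k}\rangle$, because an automorphism sends a generated subgroup to the subgroup generated by the images. A last application of Proposition \ref{ALact}$(ii)$ then yields the claim.

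I do not expect a genuine obstacle here, since the entire content lies in recognizing the explicit action of Proposition \ref{ALact}$(i)$ as componentwise inversion on the CRT factors, after which each case is routine. The one place deserving a little care is part $(ii)$: one must confirm that the kernel of the projection to $(\mathbb{Z}/D\mathbb{Z})^{\times}$ is genuinely a product of local subgroups, so that componentwise inversion applies, and this is immediate once one compares $v_{p}(D)$ with $v_{p}(N)$ prime by prime.
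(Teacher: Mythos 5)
Your proposal is correct and follows essentially the same route as the paper: reduce everything to Proposition \ref{ALact} and verify that each $H$ is stable under the explicit action of $\{\pm1\}^{\{p|N\}}$, with part $(i)$ via the characteristic-subgroup argument and part $(iii)$ via the action being trivial on $2$-torsion, exactly as in the paper. The only (cosmetic) divergence is in part $(ii)$, where the paper argues that the action commutes with the projection to $(\mathbb{Z}/D\mathbb{Z})^{\times}$ and hence preserves its kernel, while you decompose that kernel as a CRT product of local subgroups each closed under the componentwise inversion --- two phrasings of the same observation.
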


\begin{proof}
It suffices, by Proposition \ref{ALact}, to show that the corresponding groups are invariant under the action of $\{\pm1\}^{\{p|N\}}$ described explicitly in the proof of that proposition. Part $(i)$ thus follows from the fact that $(\mathbb{Z}/N\mathbb{Z})^{\times}[m]$ is a characteristic subgroup of $(\mathbb{Z}/N\mathbb{Z})^{\times}$ for any $m|\lambda(N)$. For part $(ii)$ we consider $\{\pm1\}^{\{p|D\}}$ as the quotient of $\{\pm1\}^{\{p|N\}}$ as above, and observe that the action of the former group on $(\mathbb{Z}/D\mathbb{Z})^{\times}$ and of the latter group on $(\mathbb{Z}/N\mathbb{Z})^{\times}$ commute with the projection map from residues modulo $N$ to residues modulo $D$. As this implies that the kernel of that projection is preserved under the action of $\{\pm1\}^{\{p|N\}}$, the assertion of part $(ii)$ follows. Part $(iii)$ is easily established since the operation of $\{\pm1\}^{\{p|N\}}$ is trivial on any element of $(\mathbb{Z}/N\mathbb{Z})^{\times}[2]$, and part $(iv)$ is an immediate consequence of the previous ones. This proves the corollary.
\end{proof}

\begin{rmk}
Proposition \ref{ambgrp} and Corollary \ref{ALinnorm} suffice to determine the normalizer of $\Gamma_{1}^{[m]}(N)$ for any divisor $m$ of $\lambda(N)$, as well as of $\Gamma_{0,1}(N,D)$ for divisor $D$ of $N$, as precisely $\Gamma_{0}^{*}(N)$ if $N$ is square-free (this will also follow from the more general results of Theorems \ref{Gamma01ND} and \ref{tormnorm} below). On the other hand, there are examples of groups $\Gamma_{H}$ whose normalizer does not contain $\Gamma_{0}^{*}(N)$, even in the square-free $N$ case: Consider the case where $N=91$ and $H$ is the group generated by an element $a$ whose images in both $\mathbb{F}_{7}$ and in $\mathbb{F}_{13}$ generate the multiplicative groups of the corresponding field. Since $a$ has order 12 in $(\mathbb{Z}/91\mathbb{Z})^{\times}$, any power of $a$ is determined by its image in $\mathbb{F}_{13}^{\times}$. But the image of $a$ under an element of $\Gamma_{0}^{*}(91)$ with $\mu=7$ is a residue that coincides with $a$ modulo 13 but not modulo 7. Hence this image is not a power of $a$ in $(\mathbb{Z}/91\mathbb{Z})^{\times}$, $H$ is not preserved under $\{\pm1\}^{\{p|N\}}$, and the normalizer of the associated congruence group of level 91 will be a proper subgroup of $\Gamma_{0}^{*}(91)$. In fact, as elements associated with $\mu=N$ (e.g., the \emph{Fricke involution}) operate on $(\mathbb{Z}/N\mathbb{Z})^{\times}$ via inversion, an argument similar to Proposition \ref{ambgrp} implies that such elements will normalize $\Gamma_{H}$ for any $H$. This determines the normalizer of our group with $N=91$, since the index of $\Gamma_{0}(91)$ in $\Gamma_{0}^{*}(91)$ is just 4 (see, e.g., part $(iii)$ of Proposition \ref{Gamma0*sigma}), so that Fricke subgroup has index 2 in $\Gamma_{0}^{*}(91)$, showing that it must be the normalizer of the congruence subgroup in question. \label{sqfandexc}
\end{rmk}

\section{Normalizers of Congruence Subgroups \label{CongSub}}

In this section we determine the normalizers of the several types of groups $\Gamma_{H}$, including the groups $\Gamma_{0,1}(N,D)$ with $D|N$ and $\Gamma_{1}^{[m]}(N)$ for $m|\lambda(N)$. We begin with the first family of congruence subgroups:
\begin{thm}
If $D$ is a divisor of $N$ then the normalizer of $\Gamma_{0,1}(N,D)$ is precisely $\Gamma_{0}^{*,\sigma}(N)$ for $\sigma=\gcd\big\{2D,\frac{N}{D}\big\}\cdot\frac{\gcd\{s_{N},24\}}{2^{\theta}\gcd\{s_{N},24,2D\}}$, where $\theta$ equals 1 if $2v_{2}(D)=v_{2}(N)-1$ and 0 otherwise. \label{Gamma01ND}
\end{thm}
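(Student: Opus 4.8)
The plan is to read off $\Gamma_{0,1}(N,D)$ as $\Gamma_H$ for $H$ the kernel of the projection $(\mathbb{Z}/N\mathbb{Z})^{\times}\to(\mathbb{Z}/D\mathbb{Z})^{\times}$, and then apply Corollary \ref{inGammasigK} verbatim. This reduces the theorem to three tasks: evaluate the invariants $K_{H}$ and $\eta_{H}$ (hence $\sigma_{H}=\gcd\{N/K_{H},\eta_{H}\}$); show that the extra ``inverse'' condition of that corollary is automatically satisfied on \emph{all} of $\Gamma_{0}^{*,\sigma_{H}}(N)$, so that the normalizer is the full group $\Gamma_{0}^{*,\sigma_{H}}(N)$; and finally reconcile $\sigma_{H}$ with the closed form for $\sigma$ in the statement.

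For the invariants, since $H=\{x\equiv1\ (\mathrm{mod}\ D)\}$, the kernel of the projection to $(\mathbb{Z}/K\mathbb{Z})^{\times}$ lands in $H$ exactly when $D\mid K$, so $K_{H}$ is the least multiple of $s_{N}t_{N}=N/s_{N}$ divisible by $D$, namely $\mathrm{lcm}\{N/s_{N},D\}$; the lcm--gcd duality then gives $N/K_{H}=\gcd\{s_{N},N/D\}$. For $\eta_{H}$ I would use that for inverse $e,h$ one has $\gcd\{e-h,N\}=\gcd\{e^{2}-1,N\}$ (as $h\equiv e^{-1}$ and $e$ is a unit), so $v_{p}(\eta_{H})=\min\big\{v_{p}(N),\min_{e\in H}v_{p}(e^{2}-1)\big\}$, which I compute prime by prime. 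Writing $a=v_{p}(N)$, $d=v_{p}(D)$ and using $e\equiv1\ (\mathrm{mod}\ p^{d})$, the quantity $v_{p}(e^{2}-1)=v_{p}(e-1)+v_{p}(e+1)$ is minimized to $d$ for odd $p$ with $d\geq1$; for $d=0$ it reaches $0$ when $p\geq5$ but is forced up to $1$ when $p=3$ (all units are $\pm1\bmod 3$); and for $p=2$ the two consecutive even factors force $v_{2}(e^{2}-1)\geq3$ when $d\leq1$, giving $\min\{a,3\}$, while $d\geq2$ gives $\min\{a,d+1\}$. This is precisely where the divisors $3$ and $8$, i.e.\ the factor $\gcd\{s_{N},24\}$, enter.

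The automatic satisfaction of condition $(iii)$ is the step I would treat most carefully, though it is short. Recall that $ad\mu$ and $bc\frac{N}{\mu}$ are integers, independent of the presentation, by Lemma \ref{nocanN}. Take $A$ presented as in Definition \ref{Gamma0*sNdef} and $\gamma=\big(\begin{smallmatrix} e & f \\ g & h\end{smallmatrix}\big)\in\Gamma_{H}$, so $N\mid g$ and $e\equiv h\equiv1\ (\mathrm{mod}\ D)$. Membership in $\Gamma_{0}^{*,\sigma_{H}}(N)$ forces, by Remark \ref{Gamma*sigmaeq}, the denominators of $ac$ and $bd$ to divide $\sigma_{H}$, hence $N/D$, so that $acN$ and $bdN$ lie in $D\mathbb{Z}$; thus the first two terms $bdg-acfN$ of the top-left entry in Equation \eqref{conjform} vanish modulo $D$. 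Since $ad\mu,bc\frac{N}{\mu}\in\mathbb{Z}$ and $e\equiv h\equiv1\ (\mathrm{mod}\ D)$, the remaining part satisfies $ade\mu-bch\frac{N}{\mu}\equiv ad\mu-bc\frac{N}{\mu}=1\ (\mathrm{mod}\ D)$. Hence that diagonal entry lies in $H$, and by the $SL_{2}$ relation so does its inverse partner, so the inverse condition of Corollary \ref{inGammasigK} holds for every element of $\Gamma_{0}^{*,\sigma_{H}}(N)$; the normalizer is therefore exactly $\Gamma_{0}^{*,\sigma_{H}}(N)$.

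It remains to verify $\sigma_{H}=\sigma$, i.e.\ $\min\{\lfloor a/2\rfloor,a-d,v_{p}(\eta_{H})\}=v_{p}(\sigma)$ for every prime. For $p\geq5$ and $p=3$ this follows at once from the values of $v_{p}(\eta_{H})$ above together with $\min\{d,a-d\}\leq\lfloor a/2\rfloor$, reproducing the factor $\gcd\{2D,N/D\}$ and the $p=3$ part of $\gcd\{s_{N},24\}/\gcd\{s_{N},24,2D\}$. The genuinely delicate case, and the main obstacle, is $p=2$: here the cap $\lfloor a/2\rfloor$ interacts nontrivially with $v_{2}(\eta_{H})$, and exactly when $a=2d+1$ (that is, $2v_{2}(D)=v_{2}(N)-1$) the naive product $\gcd\{2D,N/D\}\cdot\gcd\{s_{N},24\}/\gcd\{s_{N},24,2D\}$ overshoots the true valuation by one, which is the role of the correction $2^{\theta}$. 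I would close with a finite case analysis on $d\in\{0,1\}$ versus $d\geq2$ and the parity of $a$, checking in each case that $\min\{1+d,a-d\}+\min\{\lfloor a/2\rfloor,3\}-\theta-\min\{\lfloor a/2\rfloor,3,1+d\}$ equals $\min\{\lfloor a/2\rfloor,a-d,v_{2}(\eta_{H})\}$, which completes the identification $\sigma_{H}=\sigma$.
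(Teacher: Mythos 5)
Your proposal is correct and follows essentially the same route as the paper: reduce to Corollary \ref{inGammasigK}, compute $K_{H}=\mathrm{lcm}\{s_{N}t_{N},D\}$ and $\eta_{H}$, observe that the inverse-pair condition is automatic because $ad\mu-bc\frac{N}{\mu}=1$ reduces to $1$ modulo $D$, and then match $\gcd\{\frac{N}{K_H},\eta_H\}$ with the closed form by a prime-by-prime check. The only real variation is your computation of $\eta_{H}$ via $\gcd\{e-h,N\}=\gcd\{e^{2}-1,N\}$, which is a slightly cleaner substitute for the paper's parity argument on $1+kD$, $1+lD$ and the exponent-$2$ property of $(\mathbb{Z}/24\mathbb{Z})^{\times}$, and yields the same valuations.
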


\begin{proof}
The group $H$ is the kernel of the projection from $(\mathbb{Z}/N\mathbb{Z})^{\times}$ to $(\mathbb{Z}/D\mathbb{Z})^{\times}$. Before determining $\sigma_{H}$, we observe that the additional condition in Corollary \ref{inGammasigK} (i.e., condition $(iii)$ of Lemma \ref{detnorm}) is satisfied with this $H$ by any element of $\Gamma_{0}^{*,s_{N}}(N)$, since $ad\mu$ and $bc\frac{N}{\mu}$ are integers with difference 1: Indeed, taking residues modulo $D$ one may replace $e$ and $h$ by 1 and get the desired result (recall that the proof of Corollary \ref{inGammasigK} shows that we only need to verify that these elements lie in $H$, since in this case they will always be inverse to one another, so that indeed working only modulo $D$ suffices here). It follows that the normalizer of $\Gamma_{0,1}(N,D)$ is the full group $\Gamma_{0}^{*,\sigma_{H}}(N)$, and we need to show that $\sigma=\sigma_{H}$ has the asserted value. The number $K_{H}$ from Corollary \ref{inGammasigK} is $\mathrm{lcm}\{D,s_{N}t_{N}\}$, so that $\frac{N}{K_{H}}$ is $\gcd\big\{s_{N},\frac{N}{D}\big\}$.

We claim that $\eta_{H}=\mathrm{lcm}\big\{2D,\gcd\{N,24\}\big\}$, unless $\frac{N}{D}$ is odd, where $2D$ has to be replaced by $D$. The difference of any two inverses modulo $N$ which are congruent to 1 modulo $D$ is clearly divisible by $D$. We claim that if $\frac{N}{D}$ is even then it has to be divisible by $2D$. Indeed, if $D$ is odd and $N$ is even then all the diagonal entries of matrices in $\Gamma_{0,1}(N,D)$ are odd, hence congruent to 1 modulo $2D$. On the other hand, if both $D$ and $\frac{N}{D}$ are even and $1+kD$ and $1+lD$ are inverses modulo $N$ then $\frac{N}{D}$ divides $k+l+klD$, so that $k$ and $l$ must be of the same parity. Hence every such difference is divisible by $D$ if $\frac{N}{D}$ is odd but by $2D$ if $\frac{N}{D}$ is even. But such a difference is also divisible by $\gcd\{N,24\}$ since $(\mathbb{Z}/24\mathbb{Z})^{\times}$ has exponent 2. Indeed, if $\big(\begin{smallmatrix} e & f \\ g & h\end{smallmatrix}\big)$ is an element of $\Gamma_{0}(N)$ then $e$ and $h$ are residues which are inverse modulo $(N,24)$, and therefore $e$ and $h$ coincide modulo $(N,24)$ hence their difference is divisible by this number. As 24 is the maximal number $K$ such that $(\mathbb{Z}/K\mathbb{Z})^{\times}$ has exponent 2, a simple argument using the Chinese Remainder Theorem and examining residues modulo 9 or 16 shows that no number larger than the asserted value divides all the differences $e-h$ for $\big(\begin{smallmatrix} e & f \\ g & h\end{smallmatrix}\big)\in\Gamma_{0,1}(N,D)$. This determines $\eta_{H}$ as the asserted number, so that $\sigma_{H}$ is the $\gcd$ of $\frac{N}{D}$, $s_{N}$, and the value just determined of $\eta_{H}$. We may use $\mathrm{lcm}\big\{2D,\gcd\{N,24\}\big\}$ for $\eta_{H}$ in any case, since if $\frac{N}{D}$ is odd then either $s_{N}$ is odd or $v_{2}(D)>v_{2}(s_{N})$, so that this value yields the correct $\gcd$ with $s_{N}$ (hence the correct $\sigma_{H}$) in any case.

It remains to show that this $\gcd$ yields the asserted value. Replacing $\eta_{H}$ by simply $2D$ would give just the first multiplier divided by $2^{\theta}$, since $v_{p}(2D)$ and $v_{p}\big(\frac{N}{D}\big)$ cannot both exceed $v_{p}(s_{N})$ for any prime $p$, unless $p=2$ and the equality yielding $\theta=1$ holds. Now, if $v_{p}(2D) \geq v_{p}(s_{N})$ for some prime $p$ then the powers of $p$ dividing $\gcd\{s_{N},\eta_{H}\}$ and $\gcd\{s_{N},2D\}$ are both $p^{v_{p}(s_{N})}$, while if $v_{p}(2D) \geq v_{p}(24)$ (this is always the case for $p>3$) then $v_{p}(\eta_{H})=v_{p}(2D)$ and again $\gcd\{s_{N},\eta_{H}\}$ and $\gcd\{s_{N},2D\}$ are divisible by the same power of $p$. But those are, by definition, the primes $p$ which do not divide the quotient $\frac{\gcd\{s_{N},24\}}{\gcd\{s_{N},24,2D\}}$. On the other hand, if $v_{p}(2D)<\min\{v_{p}(s_{N}),v_{p}(24)\}$ then the power of $p$ which divides that quotient is the difference between the latter two expressions. But in this case we have $v_{p}\big(\frac{N}{D}\big)>v_{p}(s_{N})>v_{p}(2D)$ and $v_{p}(\eta_{H})=v_{p}(24)$, so that we compare $v_{p}(\sigma_{H})=\min\{v_{p}(s_{N}),v_{p}(24)\}$ with $v_{p}(2D)$ again. Therefore $\sigma_{H}=\gcd\big\{\frac{N}{D},s_{N},\eta_{H}\big\}$ equals the asserted value (since both are positive numbers which are divisible by the same prime powers), which completes the proof of the theorem.
\end{proof}

\begin{cor}
The index of the group $\Gamma_{0,1}(N,D)$ in its normalizer equals $D\sigma^{2}\prod_{p|D}\big(1-\frac{1}{p}\big)\prod_{p|\sigma,\ v_{p}(N)=2v_{p}(\sigma)}\big(1+\frac{1}{p}\big)\cdot\prod_{p|N/\sigma^{2}}2$, where $\sigma$ is the number from Theorem \ref{Gamma01ND}. \label{indinnorm}
\end{cor}

\begin{proof}
Part $(iii)$ of Proposition \ref{Gamma0*sigma} shows that the index of $\Gamma_{0}(N)$ in $\Gamma_{0}^{*,\sigma}(N)$ is $\sigma^{2}\prod_{p|\sigma,\ v_{p}(N)=2v_{p}(\sigma)}\big(1+\frac{1}{p}\big)\cdot\prod_{p|N/\sigma^{2}}2$. As the index of $\Gamma_{0,1}(N,D)$ in $\Gamma_{0}(N)$ is $\varphi(D)=D\prod_{p|D}\big(1-\frac{1}{p}\big)$ (since $H$ is the kernel of a surjective homomorphism onto $(\mathbb{Z}/D\mathbb{Z})^{\times}$), this proves the corollary.
\end{proof}

We therefore recover the results of \cite{[CN]}, \cite{[AL]}, \cite{[AS]}, \cite{[B]}, and others about the normalizers of the classical congruence groups:
\begin{cor}
The normalizer of $\Gamma_{1}(N)$ is $\Gamma_{0}^{*}(N)$. The normalizer of $\Gamma_{0}(N)$ is $\Gamma_{0}^{*,\gcd\{s_{N},24\}}(N)$. \label{Gamma1N0N}
\end{cor}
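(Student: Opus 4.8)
The plan is simply to specialize Theorem \ref{Gamma01ND} to the two extreme divisors $D=N$ and $D=1$, using the identifications $\Gamma_{0,1}(N,N)=\Gamma_{1}(N)$ and $\Gamma_{0,1}(N,1)=\Gamma_{0}(N)$ recorded just before the theorem. Both assertions therefore reduce to evaluating the exponent $\sigma=\gcd\{2D,\frac{N}{D}\}\cdot\frac{\gcd\{s_{N},24\}}{2^{\theta}\gcd\{s_{N},24,2D\}}$ at these two values of $D$, so the entire argument is a prime-by-prime comparison with essentially all of the work concentrated at the prime $2$.

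For $\Gamma_{1}(N)$ I would take $D=N$. Then $\gcd\{2D,\frac{N}{D}\}=\gcd\{2N,1\}=1$; the defining condition $2v_{2}(D)=v_{2}(N)-1$ for $\theta$ becomes $2v_{2}(N)=v_{2}(N)-1$, which forces $v_{2}(N)=-1$ and is hence impossible, so $\theta=0$; and since $s_{N}\mid N$ divides $2N=2D$, the denominator $\gcd\{s_{N},24,2D\}$ collapses to $\gcd\{s_{N},24\}$, canceling the numerator. Thus $\sigma=1$ and the normalizer is $\Gamma_{0}^{*,1}(N)=\Gamma_{0}^{*}(N)$, as claimed.

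For $\Gamma_{0}(N)$ I would take $D=1$, so that $2D=2$ and $\frac{N}{D}=N$. Using $\gcd\{24,2\}=2$ to simplify the denominator, the formula becomes $\sigma=\gcd\{2,N\}\cdot\frac{\gcd\{s_{N},24\}}{2^{\theta}\gcd\{s_{N},2\}}$, while the target value is $\gcd\{s_{N},24\}$. Everything therefore comes down to the identity $\gcd\{2,N\}=2^{\theta}\gcd\{s_{N},2\}$, which is the $2$-adic bookkeeping I expect to be the only delicate point. I would verify it by splitting on $v_{2}(N)$, recalling that $v_{2}(s_{N})=\lfloor v_{2}(N)/2\rfloor$ and that for $D=1$ one has $\theta=1$ precisely when $v_{2}(N)=1$: if $N$ is odd then both sides equal $1$; if $v_{2}(N)=1$ then $s_{N}$ is odd and $\theta=1$, giving $2=2^{1}\cdot1$; and if $v_{2}(N)\geq2$ then $s_{N}$ is even, $\theta=0$, and both sides equal $2$. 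In every case the identity holds, so $\sigma=\gcd\{s_{N},24\}$ and the normalizer of $\Gamma_{0}(N)$ is $\Gamma_{0}^{*,\gcd\{s_{N},24\}}(N)$, which completes the corollary.
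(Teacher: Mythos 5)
Your proposal is correct and follows essentially the same route as the paper: specializing Theorem \ref{Gamma01ND} to $D=N$ and $D=1$, and then verifying the identity $\gcd\{2,N\}=2^{\theta}\gcd\{s_{N},2\}$ by the same three-way case split on $v_{2}(N)$. The $2$-adic bookkeeping you carry out matches the paper's argument exactly.
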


\begin{proof}
For $\Gamma_{1}(N)$ we take $D=N$ in Theorem \ref{Gamma01ND}. Then $\frac{N}{D}=1$ and $\gcd\{s_{N},24\}$ divides $2D$, so that $\sigma=1$ and the normalizer is $\Gamma_{0}^{*}(N)$. On the other hand, the result for $\Gamma_{0}(N)$ is obtained by taking $D=1$ in Theorem \ref{Gamma01ND}, which yields the value $\frac{\gcd\{2,N\}\gcd\{s_{N},24\}}{2^{\theta}\gcd\{s_{N},2\}}$ for $\sigma$. We have to show that this expression reduces to $\gcd\{s_{N},24\}$, i.e., the combination of the other three multipliers cancel to 1. But if $N$ is odd then all the multipliers are 1, if $v_{2}(N)=1$ then $2^{\theta}=2$ and $s_{N}$ is odd, and if $4|N$ then $2|s_{N}$ but $\theta=0$ once more. This proves the corollary.
\end{proof}

The discrepancy between our Corollary \ref{Gamma1N0N} and the main result of \cite{[L1]} in case $N=4$ arises from the fact that this reference considers subgroups of $PSL_{2}(\mathbb{Z})$. The group we must consider in this case is the one generated by $\Gamma_{1}(N)$ and $\{\pm I\}$. The result of \cite{[L1]} is thus recovered as a special case of the following proposition.
\begin{prop}
Let $N$ and $D$ be as in Theorem \ref{Gamma01ND}. Then the normalizer of $\pm\Gamma_{0,1}(N,D)$ coincides with that of $\Gamma_{0,1}(N,D)$, unless $N=D=4$ where it coincides with that of $\Gamma_{0}(4)$. \label{pmGamma01ND}
\end{prop}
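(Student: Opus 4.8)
The plan is to pin down, for a matrix $A$ normalizing $G:=\pm\Gamma_{0,1}(N,D)$, the precise obstruction to its normalizing the smaller group $\Gamma_{0,1}(N,D)$ as well. One inclusion is immediate: since $-I$ is central in $SL_{2}(\mathbb{R})$, conjugation fixes it, so any $A$ normalizing $\Gamma_{0,1}(N,D)$ also normalizes $\langle\Gamma_{0,1}(N,D),-I\rangle=G$; hence the normalizer of $\Gamma_{0,1}(N,D)$ sits inside that of $G$, and all the content lies in the reverse inclusion. If $-I\in\Gamma_{0,1}(N,D)$, that is if $D\mid2$, the two groups coincide outright and there is nothing to prove, so I assume $D\nmid2$, in which case $G=\Gamma_{0,1}(N,D)\sqcup\bigl(-\Gamma_{0,1}(N,D)\bigr)$ is a disjoint union. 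Writing $A\gamma A^{-1}=\epsilon(\gamma)\,\delta(\gamma)$ with $\epsilon(\gamma)\in\{\pm1\}$ and $\delta(\gamma)\in\Gamma_{0,1}(N,D)$, the centrality of $-I$ shows that $\epsilon\colon\Gamma_{0,1}(N,D)\to\{\pm1\}$ is a homomorphism, and $A$ normalizes $\Gamma_{0,1}(N,D)$ exactly when $\epsilon$ is trivial. Everything thus reduces to deciding when $\epsilon$ can be nontrivial.

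The key step is a trace obstruction. Suppose $\epsilon(\gamma)=-1$, so that $A\gamma A^{-1}=-\delta$ with $\delta\in\Gamma_{0,1}(N,D)$; comparing traces gives $\operatorname{tr}(\gamma)=-\operatorname{tr}(\delta)$. But $\gamma$ and $\delta$ lie in $\Gamma_{1}(D)$, so their diagonal entries are $\equiv1\pmod D$ and both traces are $\equiv2\pmod D$; hence $2\equiv-2\pmod D$, forcing $D\mid4$. Therefore, as soon as $D\nmid4$ the homomorphism $\epsilon$ is trivial, $A$ normalizes $\Gamma_{0,1}(N,D)$, and applying the same reasoning to $A^{-1}$ yields equality of the two normalizers. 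Together with the trivial case $D\mid2$, this disposes of every $D$ except $D=4$.

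The case $D=4$ is the main obstacle, and it is exactly where the exception appears. Here $-I\notin\Gamma_{1}(4)$, while $\Gamma_{0,1}(N,4)$ has index $\varphi(4)=2$ in $\Gamma_{0}(N)$ with $-I$ representing the nontrivial coset; consequently $G=\pm\Gamma_{0,1}(N,4)=\Gamma_{0}(N)$. Thus the normalizer of $G$ is simply the normalizer of $\Gamma_{0}(N)$, which by Corollary \ref{Gamma1N0N} equals $\Gamma_{0}^{*,\gcd\{s_N,24\}}(N)$, whereas the normalizer of $\Gamma_{0,1}(N,4)$ is supplied by Theorem \ref{Gamma01ND} with $D=4$. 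The remaining task is the purely $2$-adic comparison of the two values of $\sigma$: one must check whether the factor $\gcd\{8,N/4\}$ cancels against $2^{\theta}\gcd\{s_N,8\}$ (with $\theta$ as in Theorem \ref{Gamma01ND}), so that the $\Gamma_{0,1}(N,4)$-normalizer already equals $\Gamma_{0}^{*,\gcd\{s_N,24\}}(N)$. I expect this bookkeeping, rather than any conceptual difficulty, to be the delicate part of the argument; it is precisely this step that isolates the cases where the normalizer of $\Gamma_{0,1}(N,4)$ is strictly smaller than that of $\Gamma_{0}(N)$, and in the situation $D=N=4$ of the statement one obtains the normalizer of $\Gamma_{0}(4)$, recovering both the claim and, via the passage to $PSL_{2}$, the result of \cite{[L1]} on $\Gamma_{1}(4)$.
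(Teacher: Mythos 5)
Your reduction is correct and genuinely different from the paper's: the paper applies its $K_{H}$/$\eta_{H}$ machinery (Corollary \ref{inGammasigK}) directly to the subgroup $\pm H$, whereas you isolate the obstruction through the sign homomorphism $\epsilon$ and the trace congruence $2\equiv-2\pmod{D}$, which cleanly forces $D\mid4$ and disposes of every $D\nmid4$ without computing any $\sigma$. The easy inclusion via centrality of $-I$, the case $D\mid2$, and the identification $\pm\Gamma_{0,1}(N,4)=\Gamma_{0}(N)$ are all correct.

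The gap is that you stop exactly where the content of the exceptional case lies: the ``purely $2$-adic comparison'' for $D=4$ is announced but never carried out, so the proof is incomplete. Moreover, carrying it out does not confirm the statement. For $D=4$ one always has $\pm\Gamma_{0,1}(N,4)=\Gamma_{0}(N)$, whose normalizer is $\Gamma_{0}^{*,\gcd\{s_{N},24\}}(N)$ by Corollary \ref{Gamma1N0N}, while Theorem \ref{Gamma01ND} with $D=4$ gives $\sigma=\gcd\big\{8,\tfrac{N}{4}\big\}\cdot\gcd\{s_{N},24\}\big/\big(2^{\theta}\gcd\{s_{N},24,8\}\big)$. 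Comparing $2$-adic valuations, the two values agree for every $v_{2}(N)\geq3$ but differ by one factor of $2$ whenever $v_{2}(N)=2$ --- not only for $N=4$. Concretely, for $N=12$, $D=4$ the matrix $\big(\begin{smallmatrix}1&1/2\\0&1\end{smallmatrix}\big)$ lies in $\Gamma_{0}^{*,2}(12)$ and normalizes $\Gamma_{0}(12)=\pm\Gamma_{0,1}(12,4)$, yet it conjugates $\big(\begin{smallmatrix}1&0\\12&1\end{smallmatrix}\big)$ to $\big(\begin{smallmatrix}7&-3\\12&-5\end{smallmatrix}\big)\notin\Gamma_{1}(4)$; so the two normalizers are $\Gamma_{0}^{*}(12)$ and $\Gamma_{0}^{*,2}(12)$ and the proposition fails as stated. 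Your method, once completed, therefore shows the exceptional set must be all pairs with $D=4$ and $v_{2}(N)=2$. The discrepancy traces back to the paper's own proof, which asserts that $K_{\pm H}=K_{H}$ unless $-1$ itself lies in the image of $1+s_{N}t_{N}\mathbb{Z}$ modulo $N$; in fact it suffices for that image to meet $-H$ in \emph{any} element (for $N=12$ the image is $\{1,7\}$ and $7\equiv-5$ with $5 \in H$), which happens precisely when $D=4$ and $v_{2}(N)=2$. So finish the $2$-adic comparison explicitly; it is not mere bookkeeping, and it corrects the statement.
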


\begin{proof}
Our group $H$ is the product of the image of $\pm1$ in $(\mathbb{Z}/N\mathbb{Z})^{\times}$ with the kernel of the projection to $(\mathbb{Z}/D\mathbb{Z})^{\times}$. Parts $(ii)$, $(iii)$, and $(iv)$ of Proposition \ref{ALinnorm} show that the normalizer must contain $\Gamma_{0}^{*}(N)$. On the other hand, the number $\eta_{H}$, which is based on divisibility of differences between inverse elements of $H$, is not affected by multiplication by $-1$. In addition, $K_{H}$ is based on the intersection of $H$ with the image of $1+s_{N}t_{N}\mathbb{Z}$ modulo $N$, and $-1$ is not there unless $s_{N}t_{N}$ is 1 or 2, i.e., unless $N$ is a divisor of 4. Moreover, the only case where $N|4$ and $-I\not\in\Gamma_{0,1}(N,D)$ is where $N=D=4$. This shows that the normalizer of $\pm\Gamma_{0,1}(N,D)$ is contained in the one of $\Gamma_{0,1}(N,D)$ unless $N=D=4$, and the reverse inclusion follows immediately from the centrality of $-I$ in $SL_{2}(\mathbb{R})$. As for $N=D=4$ we get $\pm\Gamma_{0,1}(N,D)=\Gamma_{0}(4)$, this completes the proof of the proposition.
\end{proof}

The assertion from \cite{[L1]} is just the following corollary.
\begin{cor}
The normalizer of $\pm\Gamma_{1}(N)$ is $\Gamma_{0}^{*}(N)$ if $N\neq4$, and it is $\Gamma_{0}^{*,2}(4)$ in case $N=4$. The latter group contains $\Gamma_{0}^{*}(N)$ as a subgroup of index 3. \label{pmGamma1N}
\end{cor}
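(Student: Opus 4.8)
The plan is to recover the statement directly from Proposition \ref{pmGamma01ND} and Corollary \ref{Gamma1N0N} by specializing to $D=N$, and then to handle the index claim in the exceptional case $N=4$ by a small explicit computation. First I would observe that $\pm\Gamma_{1}(N)$ is exactly $\pm\Gamma_{0,1}(N,N)$, since $\Gamma_{0,1}(N,N)=\Gamma_{1}(N)$ by the identification recorded just before Theorem \ref{Gamma01ND}. Thus Proposition \ref{pmGamma01ND}, applied with $D=N$, tells me that the normalizer of $\pm\Gamma_{1}(N)$ coincides with the normalizer of $\Gamma_{1}(N)$ except in the single case $N=D=4$. By Corollary \ref{Gamma1N0N}, the normalizer of $\Gamma_{1}(N)$ is $\Gamma_{0}^{*}(N)$, so for all $N\neq4$ the normalizer of $\pm\Gamma_{1}(N)$ is $\Gamma_{0}^{*}(N)$, which is the first assertion.

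For the case $N=4$, Proposition \ref{pmGamma01ND} says the normalizer of $\pm\Gamma_{1}(4)$ coincides with that of $\Gamma_{0}(4)$, so I would invoke the second statement of Corollary \ref{Gamma1N0N}: the normalizer of $\Gamma_{0}(N)$ is $\Gamma_{0}^{*,\gcd\{s_{N},24\}}(N)$. For $N=4$ we have $s_{4}=2$ and $t_{4}=1$, so $\gcd\{s_{4},24\}=2$, giving the normalizer $\Gamma_{0}^{*,2}(4)$. This establishes the main equality in the exceptional case.

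It remains to verify the index claim that $\Gamma_{0}^{*,2}(4)$ contains $\Gamma_{0}^{*}(4)=\Gamma_{0}^{*,1}(4)$ with index $3$. For this I would apply the index formula of part $(iii)$ of Proposition \ref{Gamma0*sigma}, namely $[\Gamma_{0}^{*,\sigma}(N):\Gamma_{0}(N)]=\sigma^{2}\prod_{p\mid\sigma,\ v_{p}(N)=2v_{p}(\sigma)}\big(1+\tfrac{1}{p}\big)\cdot\prod_{p\mid N/\sigma^{2}}2$, and compute both indices over $\Gamma_{0}(4)$ as a common reference, taking the quotient. For $\sigma=2$, $N=4$ we have $\sigma^{2}=4$, the condition $v_{2}(4)=2=2v_{2}(2)$ holds so the middle factor contributes $1+\tfrac12=\tfrac32$, and $N/\sigma^{2}=1$ has no prime divisors so the last factor is empty; this gives $[\Gamma_{0}^{*,2}(4):\Gamma_{0}(4)]=4\cdot\tfrac32=6$. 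For $\sigma=1$ we get $\sigma^{2}=1$, an empty middle product, and $N/\sigma^{2}=4$ whose only prime divisor is $2$, giving a last factor of $2$, so $[\Gamma_{0}^{*}(4):\Gamma_{0}(4)]=2$. Since $\Gamma_{0}^{*}(4)\subseteq\Gamma_{0}^{*,2}(4)$ by part $(i)$ of Proposition \ref{Gamma0*sigma}, the index is $6/2=3$, as claimed. I expect the only real point needing care to be the bookkeeping in the index formula (in particular the middle Euler factor triggered by the condition $v_{2}(N)=2v_{2}(\sigma)$); everything else is an immediate specialization of the preceding results, so there is no substantial obstacle. This proves the corollary.
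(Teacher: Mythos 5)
Your proposal is correct and follows essentially the same route as the paper: specialize Proposition \ref{pmGamma01ND} to $D=N$, invoke Corollary \ref{Gamma1N0N} for both the generic case and the exceptional case $N=4$, and obtain the index $3$ by comparing the indices of $\Gamma_{0}(4)$ in the two groups via part $(iii)$ of Proposition \ref{Gamma0*sigma}. Your explicit evaluation of those indices ($6$ and $2$) is accurate and merely spells out what the paper leaves to the reader.
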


\begin{proof}
This is just the case $D=N$ in Proposition \ref{pmGamma01ND}. The assertion thus follows from Corollary \ref{Gamma1N0N}, and the index is obtained by comparing the indices of $\Gamma_{0}(4)$ in the two groups, which are evaluated in Proposition \ref{Gamma0*sigma}. This proves the corollary.
\end{proof}

\smallskip

\begin{rmk}
In Theorem \ref{Gamma01ND} we consider congruences only on three of the entries of the matrices. But a simple argument allows us to extend the result to more general congruence subgroups: If $\Gamma$ is defined to be the group of matrices $\big(\begin{smallmatrix} a & b \\ c & d\end{smallmatrix}\big) \in SL_{2}(\mathbb{Z})$ in which $T|c$, $M|b$, and $a$ and $d$ are congruent to 1 modulo $D$ and $D|N=MT$, then $\Gamma$ is the subgroup of $SL_{2}(\mathbb{Z})$ which is obtained by conjugating $\Gamma_{0,1}(N,D)$ by $\frac{1}{\sqrt{M}}\big(\begin{smallmatrix} M & 0 \\ 0 & 1\end{smallmatrix}\big)$. Thus its normalizer in $SL_{2}(\mathbb{R})$ is obtained by conjugating the normalizer from Theorem \ref{Gamma01ND} by this matrix, and it contains $\Gamma$ with the index stated in Corollary \ref{indinnorm}. As a special case of this, Remark \ref{conjGamma*N/sigma2} implies that if $M=\sigma$ and $T=\frac{N}{\sigma}$ then the normalizer is just $\Gamma_{0}^{*}\big(\frac{N}{\sigma^{2}}\big)$. In addition, this determines the normalizer of the groups $\Gamma^{0}(N)$ and $\Gamma^{1}(N)$ (in which the condition on the $c$-entry is replaced by the same condition on the $b$-entry). Moreover, the special case in which $T=D=M$ reproduces the classical result, alluded to in \cite{[BKX]}, that the normalizer of $\Gamma(M)$ is precisely $SL_{2}(\mathbb{Z})$, since then $\sigma=M$ as well and we are in the situation related to Remark \ref{conjGamma*N/sigma2} with $\frac{N}{\sigma^{2}}=1$. \label{congall}
\end{rmk}

\smallskip

We now turn to subgroups defined by their exponents.
\begin{lem}
Let $N$ be an integer, let $p$ be a prime divisor of $N$, let $m$ be a divisor of $\lambda(N)$, and take $H=(\mathbb{Z}/N\mathbb{Z})^{\times}[m]$.
\begin{enumerate}[$(i)$]
\item The number $K_{H}$ from Corollary \ref{inGammasigK} equals $\frac{N}{\gcd\{m,s_{N}\}}$.
\item If $p$ is a prime divisor of $N$ and $\gcd\{p-1,m\}>2$ then $p$ does not divide the number $\eta_{H}$ from Corollary \ref{inGammasigK}.
\item In case $\gcd\{p-1,m\}$ is 1 or 2 and $p$ is odd, the power of $p$ which divides $\eta_{H}$ is $\max\{1,v_{p}(N)-v_{p}(m)\}$.
\item The power $v_{2}(\eta_{H})$ equals $\max\{3,v_{2}(N)-v_{2}(m)+1\}$ when $8|N$ and $m$ is even, and just $v_{2}(N)$ otherwise.
\end{enumerate} \label{tormdata}
\end{lem}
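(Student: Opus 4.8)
The plan is to reduce everything to a prime-by-prime computation through the Chinese Remainder isomorphism $(\mathbb{Z}/N\mathbb{Z})^{\times}\cong\prod_{p|N}(\mathbb{Z}/p^{v_{p}(N)}\mathbb{Z})^{\times}$, under which $H=(\mathbb{Z}/N\mathbb{Z})^{\times}[m]$ decomposes as $\prod_{p|N}H_{p}$ with $H_{p}=(\mathbb{Z}/p^{v_{p}(N)}\mathbb{Z})^{\times}[m]$. Writing $a=v_{p}(N)$ and $j=v_{p}(m)$, I would invoke the classical structure: $(\mathbb{Z}/p^{a}\mathbb{Z})^{\times}$ is cyclic of order $p^{a-1}(p-1)$ for odd $p$, while for $p=2$ it is trivial ($a=1$), of order $2$ ($a=2$), or $\langle-1\rangle\times\langle5\rangle\cong\mathbb{Z}/2\mathbb{Z}\times\mathbb{Z}/2^{a-2}\mathbb{Z}$ ($a\geq3$). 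Two reinterpretations drive the argument. First, the defining property of $K_{H}$ is that the kernel $U_{b}=\{x\equiv1\bmod p^{b}\}$ of reduction modulo $p^{b}$ lies in $H_{p}$, which holds exactly when $\exp(U_{b})\mid m$; since $K_{H}$ is a multiple of $s_{N}t_{N}$ one also has the constraint $b\geq v_{p}(s_{N}t_{N})=\lceil a/2\rceil$. Second, unwinding the definition of $\eta_{H}$ from Corollary \ref{inGammasigK} shows that $\eta_{H}\mid N$ and that $p^{c}\mid\eta_{H}$ (with $c\leq a$) if and only if every $\bar e\in H$ satisfies $\bar e^{2}\equiv1\bmod p^{c}$, i.e.\ the image of $H_{p}$ under reduction $\pi_{c}\colon(\mathbb{Z}/p^{a}\mathbb{Z})^{\times}\to(\mathbb{Z}/p^{c}\mathbb{Z})^{\times}$ has exponent at most $2$. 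Thus $v_{p}(\eta_{H})$ is the largest such $c$.

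For part $(i)$ I would compute $\exp(U_{b})$. For odd $p$, and for $p=2$ when $b\geq2$, the group $U_{b}$ is cyclic of order $p^{a-b}$, so $\exp(U_{b})=p^{a-b}$ and the condition $\exp(U_{b})\mid m$ becomes $a-b\leq j$; minimizing $b$ subject to $b\geq\lceil a/2\rceil$ yields $v_{p}(K_{H})=\max\{\lceil a/2\rceil,\,a-j\}=a-\min\{j,\lfloor a/2\rfloor\}=a-v_{p}(\gcd\{m,s_{N}\})$. The only exceptional case, $U_{1}$ at $p=2$ with $a\geq3$ (where $\exp(U_{1})=2^{a-2}$), never arises since then $\lceil a/2\rceil\geq2$; the small cases $a\leq2$ are checked directly and obey the same formula. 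Taking the product over $p\mid N$ gives $K_{H}=N/\gcd\{m,s_{N}\}$.

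For parts $(ii)$ and $(iii)$, set $\gamma=\gcd\{p-1,m\}$ and $e_{p}=\min\{j,a-1\}$; writing $m=p^{j}m'$ a short gcd computation gives $|H_{p}|=\gcd\{m,p^{a-1}(p-1)\}=p^{e_{p}}\gamma$. As $(\mathbb{Z}/p^{a}\mathbb{Z})^{\times}$ is cyclic, so is $\pi_{c}(H_{p})$, and intersecting $H_{p}$ with $\ker\pi_{c}=U_{c}$ (of order $p^{a-c}$) yields $|\pi_{c}(H_{p})|=p^{\max\{0,\,e_{p}-(a-c)\}}\gamma$, which equals its exponent. If $\gamma>2$ this exceeds $2$ for every $c\geq1$, so $v_{p}(\eta_{H})=0$, proving $(ii)$. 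If $\gamma\leq2$ and $p$ is odd, the exponent is $\leq2$ precisely when its $p$-part is trivial, i.e.\ $c\leq a-e_{p}$, so that $v_{p}(\eta_{H})=a-\min\{j,a-1\}=\max\{1,\,a-j\}$, proving $(iii)$.

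Part $(iv)$, the prime $p=2$, is where the real work lies, since the group is no longer cyclic and the factor $\langle-1\rangle$ must be tracked separately from the cyclic part $\langle5\rangle$. If $a\leq2$, or if $m$ is odd (so $H_{2}=\{1\}$), the image of $H_{2}$ has exponent at most $2$ in every $(\mathbb{Z}/2^{c}\mathbb{Z})^{\times}$, giving $v_{2}(\eta_{H})=a=v_{2}(N)$. In the remaining case $a\geq3$ with $m$ even, I would write $H_{2}=\langle-1\rangle\times(\langle5\rangle[m])$, the second factor cyclic of order $2^{s}$ with $s=\min\{j,a-2\}$. For $c\geq3$ the element $-1$ always maps to an element of order $2$, while the image of the cyclic factor has order $2^{\max\{0,\,c-a+s\}}$; hence $\exp\pi_{c}(H_{2})\leq2$ exactly when $c\leq a-s+1$. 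Since $s\leq a-2$, this bound is always between $3$ and $a$, so $v_{2}(\eta_{H})=a-s+1=a-\min\{j,a-2\}+1=\max\{3,\,v_{2}(N)-v_{2}(m)+1\}$, the asserted value. The main obstacle throughout is precisely this $2$-adic bookkeeping: the persistent order-$2$ contribution of $-1$ is what forces both the ``$+1$'' shift and the floor at $3$, and it is essential to split off the two cyclic factors before reducing modulo $2^{c}$.
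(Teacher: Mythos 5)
Your proof is correct and follows essentially the same route as the paper: the Chinese Remainder decomposition of $H$ into its $p$-primary pieces $H_{p}$, the structure of $(\mathbb{Z}/p^{a}\mathbb{Z})^{\times}$ (cyclic for odd $p$, $\langle-1\rangle\times\langle5\rangle$ for $p=2$, $a\geq3$), and a prime-local count of torsion. The only cosmetic difference is that where the paper writes torsion elements explicitly as $\pm1+ap^{v_{p}(N)-r}$ and computes their differences from their inverses, you reformulate $p^{c}\mid\eta_{H}$ as the condition that the image of $H_{p}$ modulo $p^{c}$ have exponent at most $2$ and then compute orders of images of cyclic subgroups; the two computations are equivalent, and your handling of the $\langle-1\rangle$ factor at $p=2$ correctly reproduces the $+1$ shift and the floor at $3$.
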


\begin{proof}
The kernel of the projection from $(\mathbb{Z}/N\mathbb{Z})^{\times}$ to $(\mathbb{Z}/s_{N}t_{N}\mathbb{Z})^{\times}$, namely $(1+s_{N}t_{N}\mathbb{Z})/N\mathbb{Z}$, is isomorphic to the cyclic group $s_{N}t_{N}\mathbb{Z}/N\mathbb{Z}$, of order $s_{N}$. Indeed, since $N|(s_{N}t_{N})^{2}$ we find that the product of $1+ks_{N}t_{N}$ with $1+ls_{N}t_{N}$ equals $1+(k+l)s_{N}t_{N}$ modulo $N$. In fact, $s_{N}t_{N}$ is the smallest divisor of $N$ with that property. Hence its $m$-torsion part is its torsion part of order $\gcd\{m,s_{N}\}$, which is generated by the image of $1+\frac{N}{\gcd\{m,s_{N}\}}$ modulo $N$. This proves part $(i)$.

We now recall from the Chinese Remainder Theorem that $(\mathbb{Z}/N\mathbb{Z})^{\times}$ decomposes as the product $\prod_{p|N}(\mathbb{Z}/p^{v_{p}(N)}\mathbb{Z})^{\times}$. Moreover, the components for odd primes $p$ are cyclic, while if $8|N$ then the component corresponding to $p=2$ is the product of $\{\pm1\}$ and a cyclic group (if $v_{2}(N)\leq2$ then this component is also cyclic, of order 1 or 2). This decomposition clearly goes over to a similar decomposition of $H=(\mathbb{Z}/N\mathbb{Z})^{\times}[m]$ as $\prod_{p|N}(\mathbb{Z}/p^{v_{p}(N)}\mathbb{Z})^{\times}\big[\gcd\big\{m,\lambda(p^{v_{p}(N)}\mathbb{Z})\big\}\big]$. Now, the condition from part $(ii)$ occurs only for odd $p\geq5$, and it implies that the image modulo $p$ of any generator of $(\mathbb{Z}/p^{v_{p}(N)}\mathbb{Z})^{\times}\big[\gcd\big\{m,\lambda(p^{v_{p}(N)}\mathbb{Z})\big\}\big]$, which must have order $\gcd\{p-1,m\}$ in $(\mathbb{Z}/p\mathbb{Z})^{\times}$, differs from its inverse modulo $p$. As this element $e$ and its inverse $h$ provide us a difference $e-h$ which is not divisible by $p$, part $(ii)$ follows.

On the other hand, if $\gcd\{p-1,m\}$ is 1 or 2 then the order of torsion inside $(\mathbb{Z}/p^{v_{p}(N)}\mathbb{Z})^{\times}$ in which we are interested is $p^{\min\{v_{p}(N)-1,v_{p}(m)\}}$ (perhaps multiplied by 2), except when $p=2$ and $v_{2}(N)\geq3$, where $\lambda(p^{v_{p}(N)})=\varphi(p^{v_{p}(N)})$ and the $-1$ has to be replaced by $-2$. Now, if $p$ is odd then elements of torsion order $p^{r}$ (resp. $2p^{r}$) for $r<v_{p}(N)$ inside $(\mathbb{Z}/p^{v_{p}(N)}\mathbb{Z})^{\times}$ are the images of $1+p^{v_{p}(N)-r}\mathbb{Z}$ (resp. $\pm1+p^{v_{p}(N)-r}\mathbb{Z}$) modulo $p^{v_{p}(N)}$, and the inverse of an element of the form $\pm1+ap^{v_{p}(N)-r}$ modulo $p^{v_{p}(N)}$ is congruent to $\pm1-ap^{v_{p}(N)-r}$ modulo $p^{v_{p}(N)-r+1}$. As this shows that their difference is divisible precisely by $p^{v_{p}(N)-r}$ if $a$ is not divisible by $p$, part $(iii)$ is also established by substituting $r=\min\{v_{p}(N)-1,v_{p}(m)\}$. For powers of 2 we note that if $8|N$ then the elements of order $2^{r}$ with $0<r<v_{p}(N)-1$ arise from $\pm1+2^{v_{2}(N)-r}\mathbb{Z}$, and the fact that the inverse of $\pm1+a\cdot2^{v_{2}(N)-r}$ with odd $a$ is $\pm1-a\cdot2^{v_{2}(N)-r}$ modulo $2^{v_{2}(N)-r+2}$ implies that the power of 2 which divides the corresponding difference is $v_{2}(N)-r+1$. This yields the assertion of part $(iv)$ (since $r$ is $\min\{v_{2}(N)-2,v_{2}(m)\}$ here), since the cases where $m$ is odd or where $v_{2}(N)\leq2$ are trivial. This proves the lemma.
\end{proof}

We can now prove our result about these types of groups.
\begin{thm}
For $N$ and $m$ as in Lemma \ref{tormdata}, we define $\sigma$ to be the number \[\prod_{\substack{p|\gcd\{m,s_{N}\} \\ \gcd\{p-1,m\}\leq2}}p^{\max\{1,\min\{v_{p}(m),v_{p}(2N)-v_{p}(m)\}\}}\cdot2^{\varepsilon-\max\{\theta,v_{2}(s_{N})\}},\] where $\theta$ is 1 if $2v_{2}(m)=v_{2}(N)+1$ and 0 otherwise (as in Theorem \ref{Gamma01ND}), and $\varepsilon$ equals 2 if $v_{2}(m) \geq v_{2}(N)\geq6$, 1 if $v_{2}(m)=v_{2}(N)-1\geq5$ or if $v_{2}(m)=v_{2}(N)\in\{4,5\}$, and 0 otherwise. Then the normalizer of $\Gamma_{1}^{[m]}(N)$ is precisely $\Gamma_{0}^{*,\sigma}(N)$. \label{tormnorm}
\end{thm}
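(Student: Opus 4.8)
The plan is to apply the machinery of Corollary \ref{inGammasigK} directly, since that result already reduces the computation of the normalizer of any $\Gamma_{H}$ to two pieces of data: the number $K_{H}$ (which controls condition $(ii)$ of Lemma \ref{detnorm}) and the number $\eta_{H}$ (which, together with $\tfrac{N}{K_{H}}$, gives the group $\Gamma_{0}^{*,\sigma_{H}}(N)$ via $\sigma_{H}=\gcd\{\tfrac{N}{K_{H}},\eta_{H}\}$), plus the verification of the additional inverse-preservation condition. Since $H=(\mathbb{Z}/N\mathbb{Z})^{\times}[m]$ is a \emph{characteristic} subgroup, I expect the additional condition from Corollary \ref{inGammasigK} (equivalently condition $(iii)$ of Lemma \ref{detnorm}) to hold automatically: if $e$ and $h$ are inverse elements of $H$, then $ade\mu-bc\tfrac{N}{\mu}h$ and $adh\mu-bc\tfrac{N}{\mu}e$ are their images under the Atkin--Lehner-type action on $(\mathbb{Z}/N\mathbb{Z})^{\times}$ described in Proposition \ref{ALact}, and this action preserves the characteristic subgroup $H$ by part $(i)$ of Corollary \ref{ALinnorm}. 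So the entire content of the theorem is the computation that $\sigma_{H}$, assembled prime-by-prime from the data of Lemma \ref{tormdata}, equals the stated product.

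The core of the proof is therefore a local (prime-by-prime) reconciliation. By part $(i)$ of Lemma \ref{tormdata} we have $\tfrac{N}{K_{H}}=\gcd\{m,s_{N}\}$, so $v_{p}(\tfrac{N}{K_{H}})=\min\{v_{p}(m),v_{p}(s_{N})\}$ for each $p\mid N$; and $v_{p}(\sigma_{H})=\min\{v_{p}(\tfrac{N}{K_{H}}),v_{p}(\eta_{H})\}$. I would split into the cases governing $v_{p}(\eta_{H})$ supplied by parts $(ii)$--$(iv)$ of Lemma \ref{tormdata}. First, any prime with $\gcd\{p-1,m\}>2$ contributes nothing to $\eta_{H}$ by part $(ii)$, hence nothing to $\sigma_{H}$; this matches the restriction $\gcd\{p-1,m\}\leq2$ in the product defining $\sigma$. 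For odd $p$ with $\gcd\{p-1,m\}\in\{1,2\}$, part $(iii)$ gives $v_{p}(\eta_{H})=\max\{1,v_{p}(N)-v_{p}(m)\}$, and I would check that $\min\{\min\{v_{p}(m),v_{p}(s_{N})\},\max\{1,v_{p}(N)-v_{p}(m)\}\}$ collapses to $\max\{1,\min\{v_{p}(m),v_{p}(2N)-v_{p}(m)\}\}$, using $v_{p}(2N)=v_{p}(N)$ for odd $p$ and the elementary identity $\min\{v_{p}(m),v_{p}(s_{N})\}\leq v_{p}(s_{N})$ together with $v_{p}(N)-v_{p}(m)$ versus $v_{p}(m)$ bookkeeping; the $\max\{1,\cdot\}$ survives because $v_{p}(\eta_{H})\geq1$ whenever $p\mid\gcd\{m,s_{N}\}$.

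The genuinely delicate case is $p=2$, where part $(iv)$ of Lemma \ref{tormdata} gives $v_{2}(\eta_{H})=\max\{3,v_{2}(N)-v_{2}(m)+1\}$ when $8\mid N$ and $m$ is even (and $v_{2}(N)$ otherwise), and where the anomalous behaviour of $\lambda$ on $2$-power components forces the extra correction factor $2^{\varepsilon-\max\{\theta,v_{2}(s_{N})\}}$. I expect this to be the main obstacle: one must compute $\min\{v_{2}(s_{N}),v_{2}(m),v_{2}(\eta_{H})\}$ in each of the several sub-regimes distinguished by the definition of $\varepsilon$ (namely $v_{2}(m)\geq v_{2}(N)\geq6$; $v_{2}(m)=v_{2}(N)-1\geq5$; $v_{2}(m)=v_{2}(N)\in\{4,5\}$; and the complementary cases), and verify that the resulting $v_{2}(\sigma_{H})$ agrees with the exponent $\varepsilon-\max\{\theta,v_{2}(s_{N})\}$ read off from the stated formula. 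The interaction between the constant-$3$ floor in $v_{2}(\eta_{H})$, the value $v_{2}(s_{N})=\lfloor v_{2}(N)/2\rfloor$, the parameter $\theta$ (nonzero only when $2v_{2}(m)=v_{2}(N)+1$), and the two-step shift in the order-of-element computation is precisely what produces the irregular thresholds $4,5,6$; I would handle this by tabulating $v_{2}(N)$ and $v_{2}(m)$ against the four quantities and checking agreement directly. Once the $2$-adic case is settled, comparing prime powers on both sides yields $\sigma_{H}=\sigma$, and the theorem follows from Corollary \ref{inGammasigK}.
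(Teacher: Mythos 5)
Your strategy for computing $\sigma_{H}$ is the same as the paper's: reduce to Corollary \ref{inGammasigK}, read off $\tfrac{N}{K_{H}}$ and $\eta_{H}$ from Lemma \ref{tormdata}, and reconcile $\gcd\{\tfrac{N}{K_{H}},\eta_{H}\}$ with the stated product prime by prime, with the odd primes being routine and the $2$-adic case requiring a case split against $\theta$, $v_{2}(s_{N})$, and the floor of $3$ in $v_{2}(\eta_{H})$. That part of the outline is sound and matches the paper's proof.

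There is, however, a genuine gap in your treatment of the remaining condition from Corollary \ref{inGammasigK}. You claim that $ade\mu-bch\tfrac{N}{\mu}$ and $adh\mu-bce\tfrac{N}{\mu}$ are the images of $e$ and $h$ under the Atkin--Lehner action of Proposition \ref{ALact}, so that characteristicness of $H=(\mathbb{Z}/N\mathbb{Z})^{\times}[m]$ finishes the argument via Corollary \ref{ALinnorm}. But Proposition \ref{ALact} describes the action of $\Gamma_{0}^{*}(N)$ only, where $a$, $b$, $c$, $d$ are integers and hence $ad\mu\equiv1\pmod{\tfrac{N}{\mu}}$ and $bc\tfrac{N}{\mu}\equiv-1\pmod{\mu}$. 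For a general element of $\Gamma_{0}^{*,\sigma_{H}}(N)$ with $\sigma_{H}>1$ these congruences fail: writing $ade\mu-bch\tfrac{N}{\mu}=e+bc\tfrac{N}{\mu}(e-h)$, the correction term $bc\tfrac{N}{\mu}(e-h)$ is only divisible by roughly $\tfrac{N}{\mu\sigma_{H}^{2}}\cdot\eta_{H}$, so the induced map on diagonal residues is the Atkin--Lehner flip composed with a nontrivial translation. Your argument therefore only establishes $\Gamma_{0}^{*}(N)\subseteq N(\Gamma_{1}^{[m]}(N))\subseteq\Gamma_{0}^{*,\sigma_{H}}(N)$, not the asserted equality --- and the equality is genuinely at stake, since (as Remark \ref{sqfandexc} and the statement of Corollary \ref{inGammasigK} show) this condition can cut the normalizer down to a proper subgroup of $\Gamma_{0}^{*,\sigma_{H}}(N)$. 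The paper closes this gap by working prime by prime: for $p\nmid\sigma_{H}$ there are no denominators at $p$ and the map really is the flip, while for $p\mid\sigma_{H}$ one uses the description (from the proof of Lemma \ref{tormdata}) of $H$ at $p$ as the residues congruent to $\pm1$ modulo $p^{r}$, and checks that the translation term vanishes modulo $p^{r}$. You would need to add this verification to complete the proof.
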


\begin{proof}
First we prove that the number $\sigma_{H}$ from Corollary \ref{inGammasigK} is the asserted one. That corollary evaluates it as the $\gcd$ of the two numbers $\frac{N}{K_{H}}$ and $\eta_{H}$, both of which are evaluated explicitly in Lemma \ref{tormdata}. Clearly only primes $p$ which divide $\gcd\{m,s_{N}\}$, which equals $\frac{N}{K_{H}}$ by part $(i)$ of Lemma \ref{tormdata}, can divide $\sigma_{H}$, and the condition $\gcd\{p-1,m\}\leq2$ is also imposed since part $(ii)$ of Lemma \ref{tormdata} shows that primes not satisfying it do not divide $\eta_{H}$. Now, if $p$ is an odd prime such that $\gcd\{p-1,m\}\leq2$ then we deduce from parts $(i)$ and $(iii)$ of Lemma \ref{tormdata} that $v_{p}(\sigma_{H})$ is the minimum of $v_{p}(m)$, $v_{p}(s_{N})$, and $\max\{1,v_{p}(N)-v_{p}(m)\}$, all of which are at least 1. We may omit $v_{p}(s_{N})$ since either $v_{p}(m)$ or $v_{p}(N)-v_{p}(m)$ do not exceed it (and the inequality $v_{p}(s_{N})\geq1$ is obvious), and it is easy to see that for $v_{p}(m)>0$ the minimum of the remaining two numbers is $\max\{1,\min\{v_{p}(m),v_{p}(2N)-v_{p}(m)\}\}$, where the extra multiplier of 2 does not affect the value of $v_{p}$ since $p$ is odd.

We still need to determine the power of 2 which divides $\sigma_{H}$, in case both $m$ and $s_{N}$ are even (the $\gcd$ condition is always satisfied). Parts $(i)$ and $(iv)$ of Lemma \ref{tormdata} determine it as the minimum of Corollary \ref{inGammasigK} $v_{2}(m)$, $v_{2}(s_{N})$, and $\max\{3,v_{2}(2N)-v_{2}(m)\}$, unless $v_{2}(N)=2$ (it cannot be smaller if $2|s_{N}$) and the third number is 2. We have to show that this minimum coincides with $\max\{1,\min\{v_{2}(m),v_{2}(2N)-v_{2}(m)\}\}-\theta$ up to the asserted discrepancy $\varepsilon$ (the exponent of 2 includes $\max\{v_{2}(s_{N}),\theta\}$ rather than just $\theta$ in order to exclude the case with $v_{2}(N)=v_{2}(m)=1$, where $\theta=1$ and 2 does not divide $s_{N}$). In the case where $v_{2}(m) \leq v_{2}(N)-2$ (so that taking the maximum with 3 does not change $v_{2}(2N)-v_{2}(m)$), the proof of Theorem \ref{Gamma01ND} shows that  there is no discrepancy. The case where $v_{2}(s_{N})=1$ also yields discrepancy 0, since both final numbers equal 1. Now, if $v_{2}(m)=v_{2}(N)-1\geq3$ then we have to compare $\min\{3,v_{2}(s_{N})\}$ with 2, which corresponds with $\varepsilon$ being 1 if $v_{2}(N)\geq6$ but 0 when $v_{2}(s_{N})=2$. Finally, if $v_{2}(m) \geq v_{2}(N)$ then we have the discrepancy between $\min\{3,v_{2}(s_{N})\}$ and 1, which equals 2 in case $v_{2}(N)\geq6$ and 1 if $v_{2}(s_{N})=2$. This completes the determination of $\sigma_{H}$ as the asserted value.

It remains to consider the condition from Corollary \ref{inGammasigK}. But the proof of Lemma \ref{tormdata} shows that when we consider residues modulo $p^{v_{p}(N)}$, one of two situations may occur: Either $p$ does not divide $\sigma_{H}$, or the residues of the entries $e$ and $h$ lie in $H$ if and only if they are congruent to 1 (or to $\pm1$) modulo $p^{r}$ for some fixed power $r$. In the first case the numbers $ade\mu-bch\frac{N}{\mu}$ and $adh\mu-bce\frac{N}{\mu}$ are either $e$ and $h$ or $h$ and $e$ modulo $p^{v_{p}(N)}$ (depending on whether $p$ divides $\frac{N}{\mu}$ or $\mu$), while in the second case we take the residue modulo $p^{r}$ and obtain that our numbers are congruent to $e$ and to $h$ modulo $p^{r}$ as well. As both operations preserve the $m$-torsion modulo $p^{v_{p}(N)}$ (as in the proof of Theorem \ref{Gamma01ND} and Proposition \ref{pmGamma01ND}), hence also modulo $N$ by the Chinese Remainder Theorem, we deduce from Corollary \ref{inGammasigK} that the full group $\Gamma_{0}^{*,\sigma}(N)$ normalizes $\Gamma_{1}^{[m]}(N)$. This proves the theorem.
\end{proof}

\begin{rmk}
The appearance of the maximum (with 1 or 3) in Lemma \ref{tormdata} and Theorem \ref{tormnorm} is not redundant. There exist cases where some prime $p$ may divide $2N$ (and even $s_{N}$), but can divide $m$ to a larger power. To give examples, consider $N=68$, with $v_{2}(N)=2$ but whose $\lambda$-value 16 has $v_{2}=4$, or $N=9\cdot163$, where $v_{3}(N)=2$ but the divisor $163-1$ of $\lambda(N)$ is divisible by $3^{4}$. \label{maxmust}
\end{rmk}

The special case of prime $m$ in Theorem \ref{tormnorm} is of particular interest.
\begin{cor}
If $N$ is a number such that $\lambda(N)$ is divisible by a prime number $l$ then the normalizer of $\Gamma_{1}^{[l]}(N)$ is $\Gamma_{0}^{*,l}(N)$ if $l|s_{N}$ and just $\Gamma_{0}^{*}(N)$ otherwise. \label{torprime}
\end{cor}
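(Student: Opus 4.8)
The plan is to derive Corollary \ref{torprime} directly from Theorem \ref{tormnorm} by specializing to the case $m=l$ for a prime $l$ dividing $\lambda(N)$, and simplifying the formula for $\sigma$. Since $l$ is prime, the product in the definition of $\sigma$ can involve at most the single prime $l$, because any prime $p$ contributing must divide $\gcd\{l,s_{N}\}$; thus either $l|s_{N}$ (and $p=l$ is the only candidate), or the product is empty and $\sigma=1$ up to the power-of-2 factor. So first I would split into the case $l$ odd and the case $l=2$, since the formula treats $p=2$ separately via the $2^{\varepsilon-\max\{\theta,v_2(s_N)\}}$ factor.

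For $l$ an odd prime dividing $s_N$, I would check the condition $\gcd\{l-1,m\}=\gcd\{l-1,l\}=1\leq2$, so $l$ does indeed appear in the product. The exponent is $\max\{1,\min\{v_l(l),v_l(2N)-v_l(l)\}\}=\max\{1,\min\{1,v_l(2N)-1\}\}$. Since $l|s_N$ forces $v_l(N)\geq2$, hence $v_l(2N)-1=v_l(N)-1\geq1$, this exponent collapses to $1$. Meanwhile the power-of-$2$ factor is $2^0=1$ because $v_2(m)=v_2(l)=0$ makes $\varepsilon=0$ and $\theta=0$. Hence $\sigma=l$, giving $\Gamma_0^{*,l}(N)$. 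If instead $l\nmid s_N$, the product is empty and (with $l$ odd) the power-of-$2$ factor is again trivial, so $\sigma=1$ and the normalizer is $\Gamma_0^{*}(N)=\Gamma_0^{*,1}(N)$.

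For $l=2$, the product over odd primes is empty, and everything is carried by the factor $2^{\varepsilon-\max\{\theta,v_2(s_N)\}}$ with $m=2$, i.e. $v_2(m)=1$. Here $2|s_N$ means $v_2(N)\geq2$, and I would compute $\varepsilon$: from the definition, $v_2(m)=1$ gives $\varepsilon=0$ unless $v_2(N)\in\{4,5\}$ with $v_2(m)=v_2(N)$ (impossible since $v_2(m)=1$) or the other listed regimes (all requiring $v_2(m)\geq4$), so $\varepsilon=0$. Then $\theta=1$ exactly when $2v_2(m)=v_2(N)+1$, i.e. $v_2(N)=1$, which is excluded once $2|s_N$; so $\theta=0$ and the exponent is $-v_2(s_N)$. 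This appears to give a negative power, so the delicate point is that when $v_2(m)=1$ and $v_2(N)$ is even the maximum-with-$1$ clause inside Theorem \ref{tormnorm} must already contribute a compensating factor—more precisely, I should recompute $v_2(\sigma_H)$ directly from parts $(i)$ and $(iv)$ of Lemma \ref{tormdata} rather than trusting the reduced closed form, obtaining $\min\{v_2(m),v_2(s_N),\max\{3,v_2(2N)-v_2(m)\}\}=\min\{1,v_2(s_N),\ldots\}=1$, so $\sigma=2=l$, matching $l|s_N$. When $2\nmid s_N$ the same direct computation gives $v_2(\sigma_H)=0$, so $\sigma=1$. The main obstacle is precisely this bookkeeping for $l=2$: the closed-form exponent in Theorem \ref{tormnorm} is tuned for the general-$m$ interaction between $\theta$, $\varepsilon$, and $v_2(s_N)$, and one must verify that specializing to $v_2(m)=1$ lands on the uniform answer $\sigma=\gcd\{l,s_N\}$ promised by the corollary, which is cleanest to confirm by returning to Lemma \ref{tormdata} directly.
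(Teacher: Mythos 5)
Your argument is correct and is essentially the paper's own: the paper likewise just specializes Theorem \ref{tormnorm} to $m=l$, notes that the only prime that can appear is $p=l$ (and only when $l\mid s_{N}$) with exponent $v_{l}(l)=1$, and for $l=2$ observes that $\theta=0$ (since $2\mid s_{N}$ forces $v_{2}(N)\geq2$) and $\varepsilon=0$ (since $v_{2}(m)=1$). Your detour through Lemma \ref{tormdata} in the $l=2$ case comes from reading the factor $2^{\varepsilon-\max\{\theta,v_{2}(s_{N})\}}$ as carrying the entire $2$-part --- in Theorem \ref{tormnorm} the product is not restricted to odd primes, so $p=2$ already contributes $2^{1}$ there when $2\mid\gcd\{m,s_{N}\}$ --- but the direct computation you fall back on is valid and reaches the same conclusion as the paper.
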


\begin{proof}
The only prime we must consider in Theorem \ref{tormnorm} is $p=l$, and only in the case $l|s_{N}$. The $\gcd$ condition is immediate, and it is clear that the power is $v_{l}(l)=1$ (also when $l=2$, since $\theta=0$ if $2|s_{N}$ and we are in the situation where $v_{2}(m)<5$, hence $\varepsilon=0$). The assertion thus follows from Theorem \ref{tormnorm}. This proves the corollary.
\end{proof}

Our results can now be used to determine the normalizer of $\Gamma_{H}$ for any subgroup $H$ of $(\mathbb{Z}/N\mathbb{Z})^{\times}$ in case $N$ is a prime power.
\begin{prop}
Let $l$ be a prime, and take $N=l^{u}$ for some integer $u$.
\begin{enumerate}[$(i)$]
\item If $l$ is odd then any group between $\Gamma_{1}(N)$ and $\Gamma_{0}(N)$ is of the form $\Gamma_{1}^{[m]}(N)$ for some divisor $m$ of $\lambda(N)=\varphi(N)=(l-1)l^{u-1}$, which is of the form $kl^{w}$ for some $k|l-1$ and $0 \leq w<u$. The corresponding normalizer is $\Gamma_{0}^{*}(N)$ if $k>2$ and is $\Gamma_{0}^{*,\sigma}(N)$ for $\sigma=l^{\min\{w,u-w\}}$ if $k\leq2$.
\item If $l=2$ then any group between $\pm\Gamma_{1}(N)$ and $\Gamma_{0}(N)$ can be considered as $\Gamma_{1}^{[m]}(N)$ for $m=2^{w}$ with $w \leq u-2$, where if $8|N$ then $\pm\Gamma_{1}(N)$ itself is associated with $m=1$ (even though it does not equal $\Gamma_{1}^{[1]}(N)=\Gamma_{1}(N)$). When $u\leq2$ we take $m=u-1$. The normalizer then equals $\Gamma_{0}^{*,\sigma}(N)$, with $\sigma$ being $2^{\min\{w,u+1-w\}-\theta}$ where $\theta$ is 1 if $2w=u+1$ and 0 otherwise.
\item In case $l=2$, $u\geq2$, and the group $H$ contains only elements which are congruent to 1 modulo 4 then our group is of the form $\Gamma_{0,1}(N,D)$ for some $D=2^{u-w}$ with $w \leq u-2$. In this case the normalizer is $\Gamma_{0}^{*,\sigma}(N)$ for $\sigma=2^{\min\{w,u+1-w\}-\theta}$ with $\theta$ as in part $(ii)$.
\item The remaining case is where $u\geq3$ and the group is generated by some element which is congruent to $-1$ modulo 4. Such a group
    intersects the image of $1+s_{N}t_{N}\mathbb{Z}$ in the kernel of the projection to $(\mathbb{Z}/2^{u-w}\mathbb{Z})^{\times}$ for some $0 \leq w \leq u-3$, and the normalizer is $\Gamma_{0}^{*,\sigma}(N)$ where $\sigma$ is just $2^{\min\{w,u-w\}}$.
\end{enumerate} \label{primpow}
\end{prop}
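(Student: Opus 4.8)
The plan is to reduce each part to the structure theory of $(\mathbb{Z}/N\mathbb{Z})^{\times}$ for a prime power $N=l^{u}$ and then invoke the appropriate earlier result, treating only the last case by a direct appeal to Corollary \ref{inGammasigK}. For part $(i)$, recall that for odd $l$ the group $(\mathbb{Z}/l^{u}\mathbb{Z})^{\times}$ is cyclic of order $\varphi(l^{u})=\lambda(l^{u})=(l-1)l^{u-1}$; since every subgroup of a cyclic group is the unique subgroup of its order, hence coincides with its $m$-torsion for $m$ equal to that order, every intermediate group is some $\Gamma_{1}^{[m]}(N)$ with $m\mid\lambda(N)$. Writing $m=kl^{w}$ with $k=\gcd\{m,l-1\}\mid l-1$ and $w=v_{l}(m)\leq u-1$, I would feed this into Theorem \ref{tormnorm}: the only relevant prime is $l$, the condition $\gcd\{l-1,m\}\leq2$ becomes $k\leq2$, and when it fails part $(ii)$ of Lemma \ref{tormdata} forces $l\nmid\sigma_{H}$, giving $\Gamma_{0}^{*}(N)$. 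When $k\leq2$, the exponent $\max\{1,\min\{v_{l}(m),v_{l}(2N)-v_{l}(m)\}\}$ from Theorem \ref{tormnorm} simplifies, using $v_{l}(2N)=u$ for odd $l$, to $\min\{w,u-w\}$ for $w\geq1$ while $w=0$ contributes nothing, so that $\sigma=l^{\min\{w,u-w\}}$ uniformly and Theorem \ref{tormnorm} already yields the full group as normalizer.

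For $l=2$ and $u\geq3$ I would use the decomposition $(\mathbb{Z}/2^{u}\mathbb{Z})^{\times}=\langle-1\rangle\times\langle5\rangle$, with $\langle5\rangle$ cyclic of order $2^{u-2}$ equal to the set of residues $\equiv1\pmod4$, and split the subgroups $H$ into the three mutually exclusive and exhaustive cases $-1\in H$ (part $(ii)$), $H\subseteq\langle5\rangle$ (part $(iii)$), and $H$ containing a residue $\equiv-1\pmod4$ but not $-1$ itself (part $(iv)$). In case $(iii)$, $H$ is a subgroup of the cyclic group $\langle5\rangle$, hence the kernel of the projection to $(\mathbb{Z}/2^{u-w}\mathbb{Z})^{\times}$, so $\Gamma_{H}=\Gamma_{0,1}(N,2^{u-w})$ and Theorem \ref{Gamma01ND} applies; a short $2$-adic computation, using $w\leq u-2$ (so that $\gcd\{s_{N},24\}\mid 2D$), collapses its formula to $2^{\min\{w,u+1-w\}-\theta}$. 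In case $(ii)$ the subgroups containing $-1$ are exactly $\langle-1\rangle\times(\text{subgroup of }\langle5\rangle)=(\mathbb{Z}/N\mathbb{Z})^{\times}[2^{w}]$ for $w\geq1$ together with $\pm\Gamma_{1}(N)$; since $\Gamma_{1}^{[2^{w}]}(N)=\pm\Gamma_{0,1}(N,2^{u-w})$, this case follows from case $(iii)$ through Proposition \ref{pmGamma01ND} (or directly from Theorem \ref{tormnorm}), and the degenerate values $u\leq2$ are read off from Corollaries \ref{Gamma1N0N} and \ref{pmGamma1N}.

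The substance of the proof is part $(iv)$, where no earlier theorem applies directly and I would compute $\sigma_{H}$ from Corollary \ref{inGammasigK}. Here $H=\langle-5^{b},5^{2^{u-2-w}}\rangle$, where $2^{w}=|H\cap\langle5\rangle|$ records that $H\cap\langle5\rangle$ consists of the residues $\equiv1\pmod{2^{u-w}}$, and the condition $-1\notin H$ forces $v_{2}(b)=u-3-w$, whence $0\leq w\leq u-3$. I would then determine $K_{H}=2^{\max\{u-w,\lceil u/2\rceil\}}$, so that $N/K_{H}=2^{\min\{w,\lfloor u/2\rfloor\}}$, and compute $\eta_{H}$ by a $2$-adic valuation estimate (lifting the exponent): for $x=5^{c}$ one has $v_{2}(x-x^{-1})=3+v_{2}(c)$, whereas for $x\equiv-1\pmod4$ the minimal value is the smaller $3+v_{2}(b)=u-w$, giving $v_{2}(\eta_{H})=u-w$. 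Taking the $\gcd$ then yields $\sigma_{H}=2^{\min\{w,\lfloor u/2\rfloor,u-w\}}=2^{\min\{w,u-w\}}$, the last equality because $\min\{w,u-w\}\leq\lfloor u/2\rfloor$ automatically; the disappearance of the correction $\theta$ present in parts $(ii)$ and $(iii)$ is precisely the effect of the extra residue $\equiv-1\pmod4$ on $\eta_{H}$.

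It remains to verify that all of $\Gamma_{0}^{*,\sigma_{H}}(N)$ normalizes, i.e.\ the remaining condition of Corollary \ref{inGammasigK}. Since $N=2^{u}$ has only the exact divisors $\mu\in\{1,N\}$, I would write the transformed diagonal entry as $e+P(e-h)$ (for $\mu=1$, $P=bc\frac{N}{\mu}$) or $h+P(e-h)$ (for $\mu=N$, $P=ad\mu$); the denominator bound $\sigma_{H}$ gives $v_{2}(P)\geq u-2\min\{w,u-w\}$ and $\sigma_{H}\mid\eta_{H}\mid(e-h)$ gives $v_{2}(e-h)\geq\min\{w,u-w\}$, so $v_{2}(P(e-h))\geq\max\{w,u-w\}\geq v_{2}(K_{H})$, whence the transform differs from $e$ or $h$ by a multiple of $K_{H}$ and stays in $H$ by Remark \ref{transK}. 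The main obstacle is exactly this case $(iv)$: pinning down the two valuation computations correctly—the value $u-w$ produced by the $\equiv-1\pmod4$ generator, and the collapse of $\gcd\{N/K_{H},\eta_{H}\}$ to $2^{\min\{w,u-w\}}$ in spite of the $\lceil u/2\rceil$ cap entering $K_{H}$—since this is where the interplay of $K_{H}$, $\eta_{H}$, and the square part $s_{N}$ is most delicate.
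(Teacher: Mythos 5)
Your proposal is correct and follows essentially the same route as the paper: cyclicity of $(\mathbb{Z}/l^{u}\mathbb{Z})^{\times}$ plus Theorem \ref{tormnorm} for part $(i)$, the decomposition $\{\pm1\}\times\langle5\rangle$ with Theorems \ref{Gamma01ND}, \ref{tormnorm} and Proposition \ref{pmGamma01ND} for parts $(ii)$ and $(iii)$, and a direct evaluation of $K_{H}$ and $\eta_{H}$ from Corollary \ref{inGammasigK} for part $(iv)$. Your treatment of part $(iv)$ is merely a more explicit rendering (via the generator $-5^{b}$ with $v_{2}(b)=u-3-w$ and the lifting-the-exponent computation) of the paper's terser observation that $K_{H}$ is governed by $D=2^{u-w}$ while $\eta_{H}$ is governed by $D=2^{u-1-w}$, and both yield $\sigma=2^{\min\{w,u-w\}}$.
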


\begin{proof}
The first assertion in part $(i)$ follows from the cyclicity of $(\mathbb{Z}/N\mathbb{Z})^{\times}$ for $N$ an odd prime power. Examining the group described in Theorem \ref{tormnorm}, we find that the only prime which may divide $s_{N}$ is $p=l$, and the corresponding $\gcd$ is $k$. The assertion is thus established in case $k>2$, and if $k\leq2$ we see that the difference $v_{p}(N)-v_{p}(m)=u-w$ is at least 1, so that the minimum is $\geq1$ if $l|m$ and 0 otherwise (the case $k=1$ can also be established by taking $D=p^{u-w}>1$ in Theorem \ref{Gamma01ND}, since the quotient appearing in that theorem is trivial wherever $D$ and $N$ are non-trivial powers of the same odd prime). This establishes part $(i)$ since $v_{l}(2N)=u$ for odd $l$. For the remaining parts we recall the structure of $(\mathbb{Z}/N\mathbb{Z})^{\times}$ for $N=2^{u}$ with $u\geq2$ (if $u=1$ then this group is trivial) as $\{\pm1\}$ times the cyclic group of the residues which are congruent to 1 modulo 4. This proves the first assertions in all the remaining three parts, so that part $(iii)$ follows from Theorem \ref{Gamma01ND} since the quotient appearing there is trivial for $s_{N}$ a power of 2 and $D$ divisible by 4. Part $(ii)$ is now a consequence of Theorem \ref{tormnorm} since $\varepsilon=0$ for $1 \leq v_{2}(m) \leq v_{2}(N)-2$ there, complemented by the case where $H$ is just the image of $\{\pm1\}$ to which we associated the value $m=1$ for $N\neq4$ and $m=2$ for $N=4$ (this case is proved in Corollary \ref{pmGamma1N}). In part $(iv)$ we recall that our group contains, apart from the kernel of the projection to $(\mathbb{Z}/2^{u-w}\mathbb{Z})^{\times}$, also elements which are to $-1$ precisely modulo $2^{u-1-w}$. The same argument from the proof of Theorem \ref{Gamma01ND} shows that the normalizer is $\Gamma_{0}^{*,\sigma}(N)$ where $\sigma$ is 2 raised to the power which is the minimum of $w$, $v_{2}(s_{N})$, and $\max\{3,u-w\}$ (since we use $D=2^{u-w}$ for finding $K_{H}$ but $D=2^{u-1-w}$ for determining $\eta_{H}$). One then deduces that $\sigma=2^{\min\{w,u-w\}}$, since $u-w\geq3$ and $v_{2}(s_{N})$ is not smaller than either $w$ or $u-w$. This proves the proposition.
\end{proof}

\smallskip

The groups considered in Theorems \ref{Gamma01ND} and \ref{tormnorm} can be combined to yield, e.g., groups of elements of $(\mathbb{Z}/N\mathbb{Z})^{\times}$ whose $m$th power is trivial modulo $D$. We shall not carry out the examination of these groups in general, but just mention that in the case $m=2$, which is related to lattices by Theorem \ref{autlat} below, we have $D|\eta_{H}$ and $K_{H}=\mathrm{lcm}\big\{s_{N}t_{N},\frac{D}{\gcd\{2,D\}}\big\}$ (the denominator making sure that only integers are considered). Hence $\sigma_{H}$ is divisible by $\sigma=\gcd\big\{D,\frac{2N}{D}\big\}/2^{\theta}$, with $\theta$ being 1 if $2v_{2}(D)=v_{2}(N)+1$ and 0 otherwise (and it probably equals that number, except perhaps for a few small values of $v_{2}(D)$), and one easily verifies that $\Gamma_{0}^{*,\sigma}(N)$ is contained in the normalizer in question. Indeed, Theorem \ref{autlat} below shows that the corresponding group $\Gamma_{H}$ will be the discriminant kernel of a lattice $L(N,D)$, whose group of automorphisms (arising from $SL_{2}(\mathbb{R})$) is $\Gamma_{0}^{*,\sigma}(N)$. The general case, as well as the fine details of this case, are left for future investigation.

\section{Lattices \label{Lat}}

Consider the space $M_{2}(\mathbb{R})_{0}$ of traceless $2\times2$ real matrices. With the bilinear form $(X,Y)=Tr(XY)$ (so that $(X,X)=-2\det X$) it becomes a real quadratic space of signature $(2,1)$. The space $M_{2}(\mathbb{R})_{0}$ has a convenient basis, consisting of the three matrices \[E=\begin{pmatrix} 0 & 1 \\ 0 & 0\end{pmatrix},\quad H=\begin{pmatrix} 1 & 0 \\ 0 & -1\end{pmatrix},\quad\mathrm{and}\quad F=\begin{pmatrix} 0 & 0 \\ 1 & 0\end{pmatrix}.\] $E$ and $F$ span a hyperbolic plane, and $H$ is orthogonal to both of them and pairs to 2 with itself.

The connected component $SO^{+}\big(M_{2}(\mathbb{R})_{0}\big)$ of the orthogonal group of this vector space is isomorphic to $PSL_{2}(\mathbb{R})$, as one sees by letting $SL_{2}(\mathbb{R})$ act on $M_{2}(\mathbb{R})_{0}$ by conjugation. The action of an element $A=\big(\begin{smallmatrix} e & f \\ g & h\end{smallmatrix}\big) \in SL_{2}(\mathbb{R})$, whose inverse $A^{-1}=\big(\begin{smallmatrix} h & -f \\ -g & e\end{smallmatrix}\big)$, sends our basis elements to
\begin{equation}
\begin{pmatrix} -eg & e^{2} \\ -g^{2} & eg\end{pmatrix},\quad\begin{pmatrix} eh+fg & -2ef \\ 2gh & -eh-fg\end{pmatrix},\quad\mathrm{and}\quad\begin{pmatrix} fh & -f^{2} \\ h^{2} & -fh\end{pmatrix} \label{action}
\end{equation}
respectively. Hence using the basis $-E$, $\frac{1}{2}H$, and $F$ we get the natural formula for the action of the symmetric square representation.

\smallskip

Let $N\in\mathbb{N}$ and a divisor $D$ of $N$ be given. We define $L(N,D)$ to be the lattice spanned by $\frac{\sqrt{D}}{\sqrt{N}}E$, $\sqrt{ND} \cdot F$, and $\frac{\sqrt{N}}{\sqrt{D}}H$. It is isomorphic to the orthogonal direct sum of a hyperbolic plane rescaled by $D$ and a 1-dimensional lattice generated by a vector of norm $\frac{2N}{D}$. The lattice $L_{0}(N)=L(N,1)$ is the one considered in \cite{[BO]}, while \cite{[LZ]} considers the lattice $L(N,N)$, which is denoted there $L_{1}(N)$. The dual lattice $L^{*}(N,D)=Hom\big(L(N,D),\mathbb{Z}\big)$ is identified as a subgroup of $M_{2}(\mathbb{R})_{0}$ via the bilinear form, and it is generated by $\frac{1}{\sqrt{DN}}E$, $\frac{\sqrt{N}}{\sqrt{D}}F$, and $\frac{\sqrt{D}}{2\sqrt{N}}H$. We wish to determine pre-image of the group $SAut^{+}\big(L(N,D)\big)$ (the group of automorphisms of $L(N,D)$ which lie in the connected component $SO^{+}\big(M_{2}(\mathbb{R})_{0}\big)$) in $SL_{2}(\mathbb{R})$, as well as its \emph{discriminant kernel}, i.e., the subgroup of $SAut^{+}\big(L(N,D)\big)$ which operates trivially on the discriminant group $\Delta(N,D)=L^{*}(N,D)/L(N,D)$ (or more precisely the pre-image in $SL_{2}(\mathbb{R})$ of that group as well). As for the lattices, the discriminants $\Delta(N,N)$ and $\Delta(N,1)$ will also be denoted $\Delta_{0}(N)$ and $\Delta_{1}(N)$ respectively.

The determination of the pre-image of $SAut^{+}\big(L(N,D)\big)$ in $SL_{2}(\mathbb{R})$ begins with the following observation.
\begin{lem}
Any matrix in $SL_{2}(\mathbb{R})$ whose action preserves $L(N,D)$ must lie in $\Gamma_{0}^{*,s_{N}}(N)$. \label{grplat}
\end{lem}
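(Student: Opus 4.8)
The plan is to read off membership in $\Gamma_{0}^{*,s_{N}}(N)$ from a single integrality condition on the lower-left entry $g$ of $A=\big(\begin{smallmatrix} e & f \\ g & h\end{smallmatrix}\big)\in SL_{2}(\mathbb{Z})$, extracted from the way $A$ conjugates the generators of $L(N,D)$. First I would use the formulas of Equation \eqref{action} to record the action of $A$ on the three spanning vectors, which I denote $v_{1}=\frac{\sqrt{D}}{\sqrt{N}}E$, $v_{2}=\sqrt{ND}\,F$, and $v_{3}=\frac{\sqrt{N}}{\sqrt{D}}H$ in that order, and then rewrite each image back in this basis via the inverse relations $E=\frac{\sqrt{N}}{\sqrt{D}}v_{1}$, $F=\frac{1}{\sqrt{ND}}v_{2}$, and $H=\frac{\sqrt{D}}{\sqrt{N}}v_{3}$.

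The only computation I really need is the image of $v_{1}$. Since $AEA^{-1}=e^{2}E-g^{2}F-egH$ by Equation \eqref{action}, expressing the result in the generators gives $Av_{1}A^{-1}=e^{2}v_{1}-\frac{g^{2}}{N}v_{2}-\frac{egD}{N}v_{3}$, the square-root normalizations cancelling precisely so that the coefficient of $v_{2}$ is $-g^{2}/N$. As $A$ preserves $L(N,D)$, this coefficient must be integral, whence $N\mid g^{2}$.

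Next I would translate $N\mid g^{2}$ into the form demanded by Definition \ref{Gamma0*sNdef}. Writing $N=s_{N}^{2}t_{N}$ with $t_{N}$ square-free, the condition $2v_{p}(g)\geq v_{p}(N)$ for every $p\mid N$ is equivalent to $v_{p}(g)\geq\lceil v_{p}(N)/2\rceil=v_{p}(s_{N}t_{N})$, i.e.\ to $\frac{N}{s_{N}}=s_{N}t_{N}$ dividing $g$. Taking $\mu=1$ in Definition \ref{Gamma0*sNdef} and setting $a=e$, $d=h$, $b=f$, and $c=g/N$, the conditions $a,d\in\mathbb{Z}$ and $b\in\frac{1}{s_{N}}\mathbb{Z}$ are automatic, while $c=g/N\in\frac{1}{s_{N}}\mathbb{Z}$ is exactly the divisibility $s_{N}t_{N}\mid g$ just obtained; the required equality $ad\mu-bc\frac{N}{\mu}=eh-fg=1$ is simply the determinant condition on $A$. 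Hence $A\in\Gamma_{0}^{*,s_{N}}(N)$, as claimed.

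The computation is short, so the only real care is bookkeeping: one must track the $\sqrt{D}$ and $\sqrt{N}$ normalizations to be certain the $v_{2}$-coefficient is genuinely $-g^{2}/N$ (and not, say, $-g^{2}D/N$ or $-g^{2}/(ND)$), since the entire conclusion rests on that exact cancellation, and then one must correctly identify the arithmetic equivalence $N\mid g^{2}\Leftrightarrow\frac{N}{s_{N}}\mid g$. The images of $v_{2}$ and $v_{3}$ produce further divisibility conditions involving $D$, but these are not needed here; they will matter only for cutting out the actual group $SAut^{+}\big(L(N,D)\big)$ and its discriminant kernel in the subsequent results.
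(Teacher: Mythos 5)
Your computation is correct as far as it goes, and it does prove the statement as literally printed: for an \emph{integral} matrix $A$, the single condition that the $v_{2}$-coefficient $-g^{2}/N$ of $Av_{1}A^{-1}$ be integral forces $s_{N}t_{N}\mid g$, and then $\mu=1$, $a=e$, $b=f$, $c=g/N$, $d=h$ exhibits $A$ as an element of $\Gamma_{0}^{*,s_{N}}(N)$. The bookkeeping of the $\sqrt{D}$ and $\sqrt{N}$ normalizations is right, and so is the equivalence $N\mid g^{2}\Leftrightarrow s_{N}t_{N}\mid g$.

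However, the ``$SL_{2}(\mathbb{Z})$'' in the statement is evidently a misprint for $SL_{2}(\mathbb{R})$: the paper's own proof begins with $A\in SL_{2}(\mathbb{R})$, and the lemma is used in Theorem \ref{autlat} to pin down the full $SL_{2}(\mathbb{R})$-preimage of $SAut^{+}\big(L(N,D)\big)$, for which one must exclude all non-integral real matrices. Your argument collapses in that setting: you use the integrality of $e$, $f$, $h$ to get $a,d\in\mathbb{Z}$ and $b\in\frac{1}{s_{N}}\mathbb{Z}$ for free, and the integrality of $g$ itself to pass from $N\mid g^{2}$ to $s_{N}t_{N}\mid g$; none of this is available for a general real matrix, and in general the correct exact divisor $\mu$ is not $1$. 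The paper instead sandwiches the rescaled lattice between $\widetilde{L}_{0}(N)$ and $L_{1}(N)$, deduces that all nine expressions $e^{2}$, $eg$, $\frac{g^{2}}{N}$, $2Nef$, $N(eh+fg)$, $2gh$, $Nf^{2}$, $Nfh$, $h^{2}$ of Lemma \ref{intent} are integral, and then invokes Proposition \ref{ambgrp} --- which is where the real work of producing the divisor $\mu$ and controlling the denominators of the fractional entries lives. So while nothing you wrote is false, the proof establishes only a strictly weaker statement than the one the paper needs; to prove the intended lemma you must reduce to Lemma \ref{intent} and appeal to Proposition \ref{ambgrp} (or reprove its content).
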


\begin{proof}
We know that rescaling of a lattice, i.e., multiplication of all of its generators by the square root of some integer, leaves the group $SAut^{+}\big(L(N,D)\big)$ invariant. We therefore rescale $L(N,D)$ by $\frac{N}{D}$, and obtain the lattice generated by $E$, $NF$, and $\frac{N}{D}H$. All of these lattices are contained in $L_{1}(N)$ (with generators $E$, $NF$, and $H$), and contain the rescaling $\widetilde{L}_{0}(N)$ of $L_{0}(N)$ by $N$ (generators of which can be taken to be $E$, $NF$, and $NH$). It follows that if the action of the matrix $A=\big(\begin{smallmatrix} e & f \\ g & h\end{smallmatrix}\big) \in SL_{2}(\mathbb{R})$ preserves $L(N,D)$ then it must take $\widetilde{L}_{0}(N)$ into $L_{1}(N)$. But the operation of $A$ sends the basis of $\widetilde{L}_{0}(N)$ to the matrices given in Equation \eqref{action}, with the middle and right matrices multiplied by $N$. The resulting matrices lie in $L_{1}(N)$ if and only if the expressions $e^{2}$, $eg$, $\frac{g^{2}}{N}$, $2Nef$, $N(eh+fg)$, $2gh$, $Nf^{2}$, $Nfh$, and $h^{2}$, which represent these matrices in terms of the basis for $L_{1}(N)$, are all integral. As these numbers are precisely the ones appearing in Lemma \ref{intent}, we deduce from Proposition \ref{ambgrp} that $A$ must be in $\Gamma_{0}^{*,s_{N}}(N)$. This proves the lemma.
\end{proof}

\smallskip

We can now prove our main result concerning automorphisms of lattices. We shall allow ourselves the abuse of notation denoting by $SAut^{+}\big(L(N,D)\big)$ also the subgroup of $SL_{2}(\mathbb{R})$ which lies over the automorphism group (which is a subgroup of $PSL_{2}(\mathbb{R})$ by definition). The same applies for its subgroups, in particular the discriminant kernel. In addition, note that $\Delta(N,D)$ decomposes as the direct sum of three cyclic groups, namely $\Delta(N,D)_{H}$ that is generated by $\frac{\sqrt{D}}{2\sqrt{N}}H+L(N,D)$ (of order $\frac{2N}{D}$), $\Delta(N,D)_{E}$ with generator $\frac{1}{\sqrt{DN}}E+L(N,D)$ (having order $D$), and $\Delta(N,D)_{F}$, a generator for which is $\frac{\sqrt{N}}{\sqrt{D}}F+L(N,D)$ (with order $D$ as well). The notation $\Delta_{0}(N)_{H}$, $\Delta_{1}(N)_{H}$ etc. is defined similarly.
\begin{thm}
\begin{enumerate}[$(i)$]
\item $SAut^{+}\big(L(N,D)\big)$ is the group $\Gamma_{0}^{*,\sigma}(N)$, where $\sigma$ is defined to be $\gcd\big\{D,\frac{2N}{D}\big\}/2^{\theta}$, with $\theta$ being 1 if $2v_{2}(D)=v_{2}(N)+1$ and 0 otherwise.
\item The stabilizer in $SAut^{+}\big(L(N,D)\big)$ of the subgroup $\Delta(N,D)_{H}$ of $\Delta(N,D)$, or equivalently of its orthogonal complement $\Delta(N,D)_{E}\oplus\Delta(N,D)_{F}$ is precisely $\Gamma_{0}^{*}(N)$.
\item The subgroup of $SAut^{+}\big(L(N,D)\big)$ that fixes all the elements of $\Delta(N,D)_{H}$ pointwise in the discriminant group is the subgroup of $\Gamma_{0}^{*}(N)$, containing $\Gamma_{0}(N)$, that is based only on those divisors $\mu$ of $N$ which are co-prime to $\frac{N}{D}$.
\item Elements of $SAut^{+}\big(L(N,D)\big)$ whose action leaves all the three subgroups $\Delta(N,D)_{H}$, $\Delta(N,D)_{E}$, and $\Delta(N,D)_{F}$ invariant is the group consisting of those elements of $\Gamma_{0}^{*}(N)$ in which $\mu$ is co-prime to $D$.
\item The discriminant kernel of $L(N,D)$ is the subgroup $\Gamma_{H}$ of $\Gamma_{0}(N)$, in which $H$ is group consisting of those elements of $(\mathbb{Z}/N\mathbb{Z})^{\times}$ whose square becomes trivial in $(\mathbb{Z}/D\mathbb{Z})^{\times}$.
\end{enumerate} \label{autlat}
\end{thm}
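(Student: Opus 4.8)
The plan is to first pin down $SAut^{+}\bigl(L(N,D)\bigr)$ as an explicit set of matrices, and then read off parts $(ii)$--$(v)$ from the induced action on the discriminant group. For part $(i)$, Lemma \ref{grplat} (whose proof, via Proposition \ref{ambgrp}, in fact applies to any $A\in SL_{2}(\mathbb{R})$) already yields the containment $SAut^{+}\bigl(L(N,D)\bigr)\subseteq\Gamma_{0}^{*,s_{N}}(N)$, so I would write a general element as in Definition \ref{Gamma0*sNdef} and decide exactly when it preserves the lattice. Rescaling $L(N,D)$ by $\frac{N}{D}$ (which does not change the automorphism group) gives the lattice spanned by $E$, $NF$, $\frac{N}{D}H$, i.e.\ the set of $xE+yH+zF$ with $x\in\mathbb{Z}$, $y\in\frac{N}{D}\mathbb{Z}$, $z\in N\mathbb{Z}$. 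Feeding the images of $E$, $H$, $F$ from Equation \eqref{action} into these three congruences and substituting $e=a\sqrt{\mu}$, $f=b/\sqrt{\mu}$, $g=c\frac{N}{\mu}\sqrt{\mu}$, $h=d\sqrt{\mu}$, five of the nine integrality conditions (those from $e^{2}$, $\frac{g^{2}}{N}$, $Nf^{2}$, $h^{2}$, and $eh+fg$) hold automatically by Definition \ref{Gamma0*sNdef} and the relation $ad\mu-bc\frac{N}{\mu}=1$, while the genuine constraints collapse to $acD\in\mathbb{Z}$, $Dbd\in\mathbb{Z}$, $\frac{2N}{D}ab\in\mathbb{Z}$, and $\frac{2N}{D}cd\in\mathbb{Z}$.

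By Lemma \ref{nocanN}$(i)$ the products $ab,ac,bd,cd$ are presentation-independent, and by Remark \ref{Gamma*sigmaeq} membership in $\Gamma_{0}^{*,\sigma}(N)$ is governed by the denominators of $ab,cd$ (equivalently of $ac,bd$), all dividing $s_{N}$. Working one prime $p\mid s_{N}$ at a time, the four conditions bound the common $p$-denominator of these products by $\min\{v_{p}(s_{N}),v_{p}(D),v_{p}(\frac{2N}{D})\}$, so the preserving matrices are exactly $\Gamma_{0}^{*,\sigma}(N)$ with $v_{p}(\sigma)=\min\{v_{p}(s_{N}),v_{p}(D),v_{p}(\frac{2N}{D})\}$. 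That this equals $v_{p}\bigl(\gcd\{D,\frac{2N}{D}\}/2^{\theta}\bigr)$ is the identical case analysis --- splitting on the parity of $v_{p}(N)$ and on whether $2v_{2}(D)=v_{2}(N)+1$ --- already carried out in the proof of Theorem \ref{Gamma01ND}; in particular the $v_{p}(s_{N})$ term is redundant at odd primes. This gives $(i)$.

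For parts $(ii)$--$(v)$ I would compute the action on $L^{*}(N,D)/L(N,D)$, which is generated by $\bar\xi=\frac{1}{\sqrt{DN}}E$, $\bar\phi=\frac{\sqrt{N}}{\sqrt{D}}F$, and $\bar\eta=\frac{\sqrt{D}}{2\sqrt{N}}H$, of orders $D$, $D$, and $\frac{2N}{D}$. Substituting into Equation \eqref{action} presents the action of $A$ as the matrix whose columns give $A\bar\xi,A\bar\phi,A\bar\eta$ in the basis $(\bar\xi,\bar\phi,\bar\eta)$, its entries reduced modulo $D$, $D$, $\frac{2N}{D}$ along the three rows,
\[
M=\begin{pmatrix}
a^{2}\mu & -b^{2}\frac{N}{\mu} & -Dab \\
-c^{2}\frac{N}{\mu} & d^{2}\mu & Dcd \\
-\frac{2acN}{D} & \frac{2bdN}{D} & 1+2bc\frac{N}{\mu}
\end{pmatrix}.
\]
Since the $\bar\eta$-components of $A\bar\xi$ and $A\bar\phi$ are automatically multiples of $\frac{2N}{D}$, the only way $A$ can mix the three summands is through the four entries $-Dab$, $Dcd$, $-c^{2}\frac{N}{\mu}$, $-b^{2}\frac{N}{\mu}$. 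For $(ii)$, stabilizing $\langle\bar\eta\rangle$ (equivalently its orthogonal complement $\langle\bar\xi,\bar\phi\rangle$) means $Dab\equiv Dcd\equiv0\pmod D$, i.e.\ $ab,cd\in\mathbb{Z}$, which by Remark \ref{Gamma*sigmaeq} is exactly $A\in\Gamma_{0}^{*}(N)$. For $(iii)$ one adds $1+2bc\frac{N}{\mu}\equiv1\pmod{\frac{2N}{D}}$, i.e.\ $\mu\mid bcD$; since $\mu$ is exact and $ad\mu-bc\frac{N}{\mu}=1$ forces $p\nmid bc$ for every $p\mid\mu$, this is equivalent to $\gcd\{\mu,\frac{N}{D}\}=1$. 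For $(iv)$ one instead demands the remaining off-diagonal entries vanish, $D\mid b^{2}\frac{N}{\mu}$ and $D\mid c^{2}\frac{N}{\mu}$, which by the same exactness argument is $\gcd\{\mu,D\}=1$; together with $A\in\Gamma_{0}^{*}(N)$ this is the asserted description.

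Finally, $(v)$ is the intersection of $(iii)$ and $(iv)$: acting trivially forces $\mu$ coprime to both $D$ and $\frac{N}{D}$, hence $\mu=1$ and $A\in\Gamma_{0}(N)$, after which the diagonal conditions reduce (using $D\mid N$, so $ad\equiv1\pmod D$) to the single congruence $a^{2}\equiv1\pmod D$. Thus the discriminant kernel is $\Gamma_{H}$ with $H=\{a\in(\mathbb{Z}/N\mathbb{Z})^{\times}:a^{2}\equiv1\pmod D\}$, the preimage of $(\mathbb{Z}/D\mathbb{Z})^{\times}[2]$ --- in agreement with the $m=2$, ``trivial modulo $D$'' description at the end of Section \ref{CongSub}, and specializing for $D=N$ to $(\mathbb{Z}/N\mathbb{Z})^{\times}[2]$. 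I expect the main obstacle to be the prime-by-prime bookkeeping in $(i)$: matching the minimum to the closed form $\gcd\{D,\frac{2N}{D}\}/2^{\theta}$, where the prime $2$ and the correction $\theta$ demand the delicate parity and edge-case analysis inherited from Theorem \ref{Gamma01ND}. By comparison the discriminant-group steps $(ii)$--$(v)$ are routine once the three elementary equivalences ($ab,cd\in\mathbb{Z}$; $\mu\mid bcD$; $D\mid b^{2}\frac{N}{\mu}$) are translated through the exactness of $\mu$.
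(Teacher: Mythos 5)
Your proof is correct and follows essentially the same route as the paper: rescale $L(N,D)$ so that part $(i)$ reduces to the four genuine integrality conditions $Dac,\ Dbd,\ \tfrac{2N}{D}ab,\ \tfrac{2N}{D}cd\in\mathbb{Z}$, identify the resulting group as $\Gamma_{0}^{*,\gcd\{D,2N/D,s_{N}\}}(N)$ via Remark \ref{Gamma*sigmaeq} and the case analysis of Theorem \ref{Gamma01ND}, and then read off $(ii)$--$(v)$ from the matrix of the action on the dual generators. Two small remarks: your parenthetical claim that the $\bar\eta$-components $-\tfrac{2N}{D}ac$ and $\tfrac{2N}{D}bd$ of $A\bar\xi$ and $A\bar\phi$ are ``automatically'' multiples of $\tfrac{2N}{D}$ holds only once $ac,bd\in\mathbb{Z}$, i.e.\ inside $\Gamma_{0}^{*}(N)$ (this is harmless, since $(ii)$ needs only the components of $A\bar\eta$ and $(iii)$--$(v)$ take place inside $\Gamma_{0}^{*}(N)$); and your reading of $(v)$ as triviality of the square modulo $D$ rather than modulo $N$ is the statement the paper's own proof actually establishes, consistent with Corollary \ref{L0NL1N}.
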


\begin{proof}
As in the proof of Lemma \ref{grplat}, we determine the $SAut^{+}$ group of the rescaled lattice. Similar considerations show that using the basis $E$, $NF$, and $\frac{N}{D}H$ of that lattice (rather than the generators $E$, $NF$, and $NH$ of $\widetilde{L}_{0}(N)$), and requiring integrality with respect to the same basis (instead of with respect to $E$, $NF$, and $H$ spanning $L_{1}(N)$), we have to consider elements $A=\big(\begin{smallmatrix} e & f \\ g & h\end{smallmatrix}\big)$ of $\Gamma_{0}^{*,s_{N}}(N)$ in which $e^{2}$, $\frac{Deg}{N}$, $\frac{g^{2}}{N}$, $\frac{2Nef}{D}$, $eh+fg$, $\frac{2gh}{D}$, $Nf^{2}$, $Dfh$, and $h^{2}$ are integers. Writing $A$ as in Definition \ref{Gamma0*sNdef}, we find that the numbers which must be integral are $a^{2}\mu$, $Dac$, $c^{2}\frac{N}{\mu}$, $\frac{2N}{D}ab$, $ad\mu+bc\frac{N}{\mu}$, $\frac{2N}{D}cd$, $b^{2}\frac{N}{\mu}$, $Dbd$, and $a^{2}\mu$. This is not immediate only for the expressions not involving $\mu$, which are invariants of the matrix $A$ (i.e., independent of the presentation) by part $(i)$ of Lemma \ref{nocanN}, and which determine the group $\Gamma_{0}^{*,\sigma}(N)$ to which $A$ belongs by Remark \ref{Gamma*sigmaeq}. This determines the $SL_{2}(\mathbb{R})$-pre-image of $SAut^{+}\big(L(N,D)\big)$ as $\Gamma_{0}^{*,\sigma}(N)$, where $\sigma$ is $\gcd\big\{D,\frac{2N}{D},s_{N}\}$. The same considerations as in the proof of Theorem \ref{Gamma01ND} show that $\sigma$ has the asserted value (but note the difference that here $\frac{N}{D}$ is multiplied by 2, while the number which is multiplied by 2 in that theorem is $D$, whence the difference in the definition of $\theta$). This proves part $(i)$.

For the remaining parts it will be convenient to rescale $L^{*}(N,D)$ as well, to get the lattice generated by $\frac{1}{D}E$, $\frac{1}{2}H$, and $\frac{N}{D}F$ (this is \emph{not} the dual of the rescaled lattice, but just makes the coefficients in the calculation neater). An element $A\in\Gamma_{0}^{*,\sigma}(N)$, written as in  Definition \ref{Gamma0*sNdef} (or \ref{Gamma0*sigmadef}), takes this basis to \[\frac{1}{D}\!\begin{pmatrix} -acN & a^{2}\mu \\ -c^{2}\frac{N^{2}}{\mu^{2}} & acN\end{pmatrix},\quad\!\frac{1}{2}\!\begin{pmatrix} ad\mu+bc\frac{N}{\mu} & -2ab \\ 2cd & -ad\mu-bc\frac{N}{\mu}\end{pmatrix},\quad\!\mathrm{and}\quad\!\frac{N}{D}\!\begin{pmatrix} bd & -\frac{b^{2}}{\mu} \\ d^{2}\mu & -bd\end{pmatrix}\] respectively. The action of $A$ preserves $\Delta(N,D)_{H}$ (and $\Delta(N,D)_{E}\oplus\Delta(N,D)_{F}$) if and only if the parts of the image of $\frac{1}{2}H$ that is spanned by $\frac{1}{D}E$ and $\frac{N}{D}F$, as well as those of the images of $\frac{1}{D}E$ and $\frac{N}{D}F$ that are multiples of $\frac{1}{2}H$, belong to the rescaled $L(N,D)$. This means that they are spanned by $E$, $NF$, and $\frac{N}{D}H$, and this happens if and only if $ac$, $ab$, $cd$, and $bd$ are integral. As this is the description of elements of $\Gamma_{0}^{*}(N)$ (see, e.g., Remark \ref{Gamma*sigmaeq} again), this establishes part $(ii)$.

We may therefore assume, if our $A$ is given in the form from Lemma \ref{nocanN}, that $a$, $b$, $c$, and $d$ are integers. Then the generator of $\Delta(N,D)_{H}$ is multiplied by the image of $ad\mu+bc\frac{N}{\mu}$ modulo $\frac{2N}{D}$, and this number is congruent to 1 modulo $\frac{2N}{\mu}$ and to $-1$ modulo $2\mu$ by the $SL_{2}$ condition. As this operation on residues modulo $\frac{2N}{D}$ is via a faithful action of the quotient from part $(ii)$ of Lemma \ref{pm1pdivN} (with the divisor being $\frac{N}{D}$), the pointwise stabilizer of $\Delta(N,D)_{H}$ corresponds to the kernel of the projection map from that lemma, yielding part $(iii)$. On the other hand, the condition for elements of $\Gamma_{0}^{*}(N)$ to leave the two subgroups $\Delta(N,D)_{E}$ and $\Delta(N,D)_{F}$ invariant as well is the divisibility of $b^{2}\frac{N}{\mu}$ and $c^{2}\frac{N}{\mu}$ by $D$. But a prime dividing $\mu$ cannot divide $b$, $c$, or $\frac{N}{\mu}$, so that such divisibility condition can (and will) hold if and only if no such prime divides $D$, and part $(iv)$ follows.

Now, as elements of the discriminant kernel must satisfy the conditions of both parts $(iii)$ and $(iv)$, and only $\mu=1$ is co-prime to both $D$ and $\frac{N}{D}$, we deduce from these parts that the discriminant kernel is contained in $\Gamma_{0}(N)$. But an element $A=\big(\begin{smallmatrix} a & b \\ Nc & d\end{smallmatrix}\big)$ of that group fixes the generator of $\Delta(N,D)_{H}$, but multiplies the generators of $\Delta(N,D)_{E}$ and $\Delta(N,D)_{F}$ by $a^{2}$ and $d^{2}$ respectively (this generalizes the assertion for $D=N$ given in Proposition 4.1 of \cite{[LZ]}). As the latter subgroups are cyclic of order $D$, the discriminant kernel consists of those elements of $\Gamma_{0}(N)$ as above that satisfy $a^{2} \equiv d^{2}\equiv1(\mathrm{mod\ }D)$. As this is precisely the asserted group $\Gamma_{H}$, we obtain part $(v)$ as well. This completes the proof of the theorem.
\end{proof}

The results of Proposition 2.2 of \cite{[BO]} and of Remark 4.3 of \cite{[LZ]} are obtained as special cases of Theorem \ref{autlat}.
\begin{cor}
\begin{enumerate}[$(i)$]
\item The group $SAut^{+}\big(L_{0}(N)\big)$, as well as its separating subgroups from parts $(ii)$ and $(iv)$ of Theorem \ref{autlat}, is $\Gamma_{0}^{*}(N)$. The pointwise stabilizer of $\Delta_{0}(N)_{H}$, which is also the discriminant kernel, is just $\Gamma_{0}(N)$.
\item For $L_{1}(N)$, the $SAut^{+}$ group is $\Gamma_{0}^{*,2}(N)$ if $4|N$ and $\Gamma_{0}^{*}(N)$ otherwise. The stabilizer, as well as the pointwise stabilizer, of $\Delta_{1}(N)_{H}$ is $\Gamma_{0}^{*}(N)$. The separating subgroup from part $(iv)$ of Theorem \ref{autlat} is $\Gamma_{0}(N)$, and the discriminant kernel is $\Gamma_{1}^{[2]}(N)$ (or $\Gamma_{1}^{\sqrt{1}}(N)$ in the notation of \cite{[LZ]}).
\end{enumerate} \label{L0NL1N}
\end{cor}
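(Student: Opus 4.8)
The plan is to read off both parts directly from Theorem \ref{autlat}, specializing to $D=1$ for part $(i)$ (so that $L_0(N)=L(N,1)$) and to $D=N$ for part $(ii)$ (so that $L_1(N)=L(N,N)$), and in each case evaluating the quantities $\sigma$, $\theta$, and the subgroup $H$ that appear there. No new geometry is required: every assertion is a substitution into Theorem \ref{autlat}, and the only genuine bookkeeping is the power of $2$ in $\sigma$ together with the identification of $H$ in part $(v)$.

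First I would dispose of part $(i)$. With $D=1$ we have $v_2(D)=0$, so the defining condition $2v_2(D)=v_2(N)+1$ for $\theta$ reads $0=v_2(N)+1$, which never holds; hence $\theta=0$ and $\sigma=\gcd\{1,2N\}/2^{0}=1$. Part $(i)$ of Theorem \ref{autlat} then yields $SAut^{+}\big(L_0(N)\big)=\Gamma_0^{*,1}(N)=\Gamma_0^{*}(N)$. The separating subgroup from part $(ii)$ of that theorem is $\Gamma_0^{*}(N)$ unconditionally, and the one from part $(iv)$ uses the exact divisors $\mu$ coprime to $D=1$, i.e.\ all of them, so it is again $\Gamma_0^{*}(N)$. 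For the pointwise stabilizer (part $(iii)$) the admissible $\mu$ are those coprime to $\frac{N}{D}=N$, forcing $\mu=1$ and leaving exactly $\Gamma_0(N)$. Finally the congruence $a^{2}\equiv d^{2}\equiv1\pmod{D}$ from the proof of part $(v)$ is vacuous for $D=1$, so $H=(\mathbb{Z}/N\mathbb{Z})^{\times}$ and the discriminant kernel is $\Gamma_0(N)$, consistent with the pointwise stabilizer.

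Next I would treat part $(ii)$, where the only real computation is $v_2(\sigma)$. With $D=N$ the formula gives $\sigma=\gcd\{N,2\}/2^{\theta}$, and since $v_2(D)=v_2(N)$ the relation $2v_2(N)=v_2(N)+1$ defining $\theta=1$ holds exactly when $v_2(N)=1$. Splitting into the cases $N$ odd, $v_2(N)=1$, and $v_2(N)\ge2$ gives $\sigma=1$, $\sigma=2/2=1$, and $\sigma=2$ respectively, so $SAut^{+}\big(L_1(N)\big)$ is $\Gamma_0^{*,2}(N)$ when $4\mid N$ and $\Gamma_0^{*}(N)$ otherwise. The stabilizer of the $H$-subgroup is $\Gamma_0^{*}(N)$ by part $(ii)$ of the theorem; its pointwise stabilizer (part $(iii)$) uses $\mu$ coprime to $\frac{N}{D}=1$, hence all $\mu$, giving $\Gamma_0^{*}(N)$ again; and the separating subgroup from part $(iv)$ uses $\mu$ coprime to $D=N$, forcing $\mu=1$ and yielding $\Gamma_0(N)$.

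The last and most substantive step is identifying the discriminant kernel for $D=N$. The proof of part $(v)$ of Theorem \ref{autlat} describes it as the set of $\big(\begin{smallmatrix} a & b \\ Nc & d\end{smallmatrix}\big)\in\Gamma_0(N)$ with $a^{2}\equiv d^{2}\equiv1\pmod{N}$; the point I would emphasize is that the $SL_2$ relation forces $ad\equiv1\pmod N$, so $a^{2}\equiv1$ is equivalent to $a\equiv a^{-1}\equiv d\pmod N$, i.e.\ $a$ lies in the $2$-torsion $(\mathbb{Z}/N\mathbb{Z})^{\times}[2]$. Hence $H=(\mathbb{Z}/N\mathbb{Z})^{\times}[2]$ and $\Gamma_H=\Gamma_1^{[2]}(N)$ (the group $\Gamma_1^{\sqrt{1}}(N)$ of \cite{[LZ]}). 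I do not anticipate any obstacle here: the whole corollary is a specialization of Theorem \ref{autlat}, and the only care required is the $v_2$ bookkeeping for $\sigma$ and the elementary identification of the $2$-torsion subgroup.
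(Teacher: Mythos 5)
Your proposal is correct and follows essentially the same route as the paper: both parts are obtained by specializing Theorem \ref{autlat} to $D=1$ and $D=N$ respectively, with the only computations being the $2$-adic bookkeeping for $\sigma$ (including the $\theta=1$ case when $v_{2}(N)=1$) and the identification of the mod-$D$ congruence on the diagonal entries as the $2$-torsion condition. The paper's own proof is terser (it notes for $L_{0}(N)$ that the $E$- and $F$-parts of the discriminant group are trivial, and for $L_{1}(N)$ that the $H$-part has order $2$ and hence no nontrivial automorphisms, where you instead track the $\mu$-coprimality conditions of parts $(iii)$ and $(iv)$ directly), but these are equivalent readings of the same theorem.
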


\begin{proof}
Part $(i)$ is the case $D=1$ in Theorem \ref{autlat}, where $\Delta_{0}(N)_{E}$ and $\Delta_{0}(N)_{F}$ are trivial, so that $\Delta_{0}(N)_{H}$ is the full discriminant group $\Delta_{0}(N)$. For part $(ii)$ we take $D=N$ in that theorem, so that $\sigma$ from part $(i)$ there is $\frac{\gcd\{N,2\}}{2^{\theta}}$. As the numerator is 1 for odd $N$ and $\theta=1$ in case $v_{2}(N)=1$, this proves the first assertion. The rest follows directly from Theorem \ref{autlat}, noting that $\Delta_{1}(N)_{H}$ is of order 2 and hence has no non-trivial automorphisms. This proves the corollary.
\end{proof}

\smallskip

\begin{rmk}
Similarly to the groups considered in Remark \ref{congall}, we may generalize the lattices $L(N,D)$ to any lattice that is spanned by three vectors of the form $\frac{\sqrt{DM}}{\sqrt{T}}E$, $\frac{\sqrt{DT}}{\sqrt{M}}F$, and $\frac{\sqrt{MT}}{\sqrt{D}}H$, with $M$, $T$, and $D$ integers such that $D|N=MT$. As such a lattice is the image of $L(N,D)$ under the element $\frac{1}{\sqrt{M}}\big(\begin{smallmatrix} M & 0 \\ 0 & 1\end{smallmatrix}\big)$ of $SL_{2}(\mathbb{R})$, all of the associated groups from Theorem \ref{autlat} are obtained from those of $L(N,D)$ via conjugation by this matrix. In particular the rescaling of the lattice $L(N,D)$ by some integer $K$ (or even a rational number $K$, provided that the rescaling remains an even lattice) yields such a conjugate of $L(NK^{2},DK)$. Once again the special case in which $T=D=M$ is related to $\Gamma(M)$: This lattice is the rescaling of $L_{0}(1)=L_{1}(1)=L(1,1)$ by $M$, its $SAut^{+}$ group is $SL_{2}(\mathbb{Z})$, and its discriminant kernel is not precisely $\Gamma(M)$ but the congruence subgroup consisting of those matrices in $SL_{2}(\mathbb{Z})$ whose image in $SL_{2}(\mathbb{Z}/M\mathbb{Z})$ is diagonal. \label{conjlat}
\end{rmk}

\smallskip

While Remark \ref{conjlat} extends the class of lattices $L(N,D)$ by including some of their isomorphic copies in $M_{2}(\mathbb{R})_{0}$, the lattices $L(N,D)$ themselves are mutually non-isomorphic.
\begin{prop}
Let $N$, $M$, $D$ and $C$ be integers such that $D|N$ and $C|M$. If $L(N,D)$ are $L(M,C)$ are isomorphic as lattices, then $M=N$ and $C=D$. \label{LNDnotiso}
\end{prop}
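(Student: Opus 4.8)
The plan is to recover $N$ and $D$ from a single isometry invariant that is trivial to compute, namely the discriminant group $L^{*}(N,D)/L(N,D)$. Any lattice isomorphism $L(N,D)\to L(M,C)$ carries $L^{*}(N,D)$ onto $L^{*}(M,C)$, hence induces an isomorphism of discriminant groups; so it suffices to show that, within the family of lattices $L(N,D)$, this finite abelian group already determines the pair $(N,D)$.

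First I would record the discriminant group explicitly. Using the decomposition stated right after the definition of $L(N,D)$, namely that $L(N,D)$ is the orthogonal direct sum of a hyperbolic plane rescaled by $D$ and a rank-one lattice generated by a vector of norm $\frac{2N}{D}$, one computes that the rescaled hyperbolic plane contributes $(\mathbb{Z}/D\mathbb{Z})^{2}$ (its dual being $\frac1D$ times itself) while the rank-one summand contributes $\mathbb{Z}/\frac{2N}{D}\mathbb{Z}$. Thus
\[
L^{*}(N,D)/L(N,D)\cong(\mathbb{Z}/D\mathbb{Z})^{2}\oplus\mathbb{Z}/\tfrac{2N}{D}\mathbb{Z},
\]
a group whose order is $2ND=|\det L(N,D)|$.

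Next I would localize at each prime $p$. Setting $a=v_{p}(D)$ and $b=v_{p}(N)$, so that $0\le a\le b$ since $D\mid N$, the $p$-primary part of the discriminant group is $(\mathbb{Z}/p^{a}\mathbb{Z})^{2}\oplus\mathbb{Z}/p^{c}\mathbb{Z}$, where $c=v_{p}(2N/D)$ equals $b-a$ for odd $p$ and $1+b-a$ for $p=2$. The core of the argument is that the assignment $(a,c)\mapsto(\mathbb{Z}/p^{a}\mathbb{Z})^{2}\oplus\mathbb{Z}/p^{c}\mathbb{Z}$ is injective on pairs of non-negative integers: the isomorphism type of a finite abelian $p$-group is recorded by the multiset of its positive elementary-divisor exponents, which here is the multiset of positive entries of $\{a,a,c\}$, and from this one reads off $a$ as the value of multiplicity two and $c$ as the remaining value. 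Having recovered $a$ and $c$, one recovers $b$ through $b=a+c$ for odd $p$ and $b=a+c-1$ for $p=2$. Applying this prime by prime to the induced isomorphism of discriminant groups gives $v_{p}(D)=v_{p}(C)$ and then $v_{p}(N)=v_{p}(M)$ for every $p$, whence $D=C$ and $N=M$.

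The one point requiring care—and the only real obstacle—is making the injectivity airtight in the degenerate configurations where the ``repeated value'' rule looks ambiguous: $a=0$ (the two hyperbolic elementary divisors vanish and the group is cyclic), $a=c$ (the homogeneous group $(\mathbb{Z}/p^{a}\mathbb{Z})^{3}$), and $c=0$ (possible only for odd $p$, giving $(\mathbb{Z}/p^{a}\mathbb{Z})^{2}$). I would dispatch these by a short finite check, organised by the $p$-rank $\dim_{\mathbb{F}_{p}}(G/pG)=2[a\ge 1]+[c\ge 1]$: a group in our family is cyclic exactly when $a=0$, has two generators exactly when $a\ge 1$ and $c=0$, and in that two-generator case its elementary divisors are necessarily equal (two \emph{distinct} positive elementary divisors never occur in the family), so each shape forces a unique $(a,c)$. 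Since at $p=2$ one always has $c\ge 1$, the cyclic ambiguity does not even arise there. Once these cases are settled the proposition follows, with no appeal to the discriminant quadratic form, the signature, or genus theory.
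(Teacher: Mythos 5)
Your proof is correct, and it reaches the conclusion by a genuinely more direct route than the paper. The paper first invokes a second isometry invariant---the minimal $M$ such that $L(N,D)$ is the rescaling by $M$ of an even lattice, which it identifies as $\gcd\{D,\frac{N}{D}\}$---in order to reduce to the case where $D$ and $\frac{N}{D}$ are coprime; only then does it look at the discriminant group, and there it needs only the crude invariant of whether the number of order-$p$ subgroups is one or more than one, for each $p\mid N$. You skip the reduction entirely and show that the abstract isomorphism type of $L^{*}(N,D)/L(N,D)\cong(\mathbb{Z}/D\mathbb{Z})^{2}\oplus\mathbb{Z}/\frac{2N}{D}\mathbb{Z}$ already pins down $(N,D)$, by recovering $v_{p}(D)$ and $v_{p}(2N/D)$ from the elementary divisors of each $p$-primary part. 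Your case analysis of the injectivity of $(a,c)\mapsto(\mathbb{Z}/p^{a}\mathbb{Z})^{2}\oplus\mathbb{Z}/p^{c}\mathbb{Z}$, organized by $p$-rank, is airtight (including the observation that $c\geq1$ automatically at $p=2$, and that the two-generator case forces equal elementary divisors within this family), and the bookkeeping $b=a+c$ for odd $p$ versus $b=a+c-1$ for $p=2$ is right. What your approach buys is a strictly stronger statement---the discriminant group alone, as an abstract abelian group, separates the lattices $L(N,D)$---at the cost of a slightly longer elementary-divisor discussion; what the paper's reduction buys is that after passing to the coprime case each prime divides exactly one of $D$ and $\frac{N}{D}$, so the degenerate configurations you had to dispatch by hand simply do not occur.
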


\begin{proof}
We need to show that the isomorphism class of $L(N,D)$ determines $N$ and $D$. First, $\gcd\big\{D,\frac{N}{D}\big\}$ is the minimal integer $K$ such that $L(N,D)$ is isomorphic to the rescaling of an even lattice (namely $L\big(\frac{N}{K^{2}},\frac{D}{K}\big)$) by $K$. It therefore suffices to prove the assertion for the case where $D$ and $\frac{N}{D}$ are co-prime. But if a prime $p$ divides $\frac{N}{D}$ then $\Delta(N,D)$ contains more than one cyclic subgroup of order $p$, while otherwise it does not. As this easily determines $D$ and $\frac{N}{D}$, this completes the proof of the proposition.
\end{proof}

\smallskip

We conclude with investigating what happens when $\mathbb{R}$ is replaced by other fields (which must be of characteristic 0 since $\mathbb{Z}$ is assumed to be included as a subring). Let $\overline{\mathbb{Q}}$ be an algebraic closure of $\mathbb{Q}$ (embedded in $\mathbb{C}$, say), and let $\mathbb{Q}(\sqrt{\mathbb{Q}})$ be the compositum of all the quadratic fields. In addition we denote, for any field $\mathbb{F}$, the subgroup of $GL_{2}(\mathbb{F})$ consisting of those matrices whose determinant lies in $\{\pm1\}$ by $GL_{2}^{\pm1}(\mathbb{F})$
\begin{prop}
\begin{enumerate}[$(i)$]
\item The normalizer of any group $\Gamma_{H}$ in $SL_{2}(\mathbb{R})$ coincides with the normalizer in $SL_{2}(\overline{\mathbb{Q}}\cap\mathbb{R})$, as well as with the normalizer in $SL_{2}\big(\mathbb{Q}(\sqrt{\mathbb{Q}})\cap\mathbb{R}\big)$.
\item The normalizers in $SL_{2}(\mathbb{C})$, $SL_{2}(\overline{\mathbb{Q}})$, and $SL_{2}\big(\mathbb{Q}(\sqrt{\mathbb{Q}})\big)$ also coincide. This common group is an extension of the group from part $(i)$ by an element squaring to $-I$, having a simple action on the former group.
\item The normalizer of $\Gamma_{H}$ in $GL_{2}^{\pm1}(\mathbb{R})$, as well as in $GL_{2}^{\pm1}(\overline{\mathbb{Q}}\cap\mathbb{R})$ and in $GL_{2}^{\pm1}\big(\mathbb{Q}(\sqrt{\mathbb{Q}})\cap\mathbb{R}\big)$, is the semi-direct product of the group from part $(i)$ with a cyclic group of order 2, the generator of which acts like the element from part $(ii)$.
\item The group $SAut\big(L(N,D)\big)$, without the $+$ restriction, is a semi-direct product involving $SAut^{+}\big(L(N,D)\big)$ as in part $(iii)$. Removing the determinant restriction, the groups $Aut\big(L(N,D)\big)$ and $Aut^{+}\big(L(N,D)\big)$ are obtained as the direct product of the $SAut$ (resp. $SAut^{+}$) group with the automorphism $-Id_{L(N,D)}$ of global inversion.
\end{enumerate} \label{fielddef}
\end{prop}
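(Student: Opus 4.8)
The common thread is that Proposition \ref{ambgrp} already pins the real normalizer inside $\Gamma_0^{*,s_N}(N)$, and a single complex-conjugation argument controls how much it can grow over larger fields. For part $(i)$ I would use the nesting $\mathbb{Q}(\sqrt{\mathbb{Q}})\cap\mathbb{R}\subseteq\overline{\mathbb{Q}}\cap\mathbb{R}\subseteq\mathbb{R}$: the three normalizers are then nested, so it suffices to show that the one in $SL_2(\mathbb{R})$ is already contained in the smallest field. This is immediate from Proposition \ref{ambgrp}, since by Definition \ref{Gamma0*sNdef} every entry of an element of $\Gamma_0^{*,s_N}(N)$ is a rational multiple of $\sqrt{\mu}$ for some divisor $\mu\mid N$, and such numbers lie in $\mathbb{Q}(\sqrt{\mu})\subseteq\mathbb{Q}(\sqrt{\mathbb{Q}})\cap\mathbb{R}$.

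The key lemma, underlying both $(ii)$ and $(iii)$, is the following. Let $A$ normalize $\Gamma_H$ over $\mathbb{C}$. Because $\Gamma_H\supseteq\Gamma_1(N)$ spans $M_2(\mathbb{R})$ as a real vector space (the identity, the two standard unipotent generators, and one element with unequal diagonal entries already do so), conjugation by $A$ carries the real subspace $M_2(\mathbb{R})$ into itself. Comparing $AXA^{-1}$ with its complex conjugate $\bar{A}X\bar{A}^{-1}$ for real $X$ shows that $A^{-1}\bar{A}$ centralizes $M_2(\mathbb{R})$, hence all of $M_2(\mathbb{C})$, so $A^{-1}\bar{A}=\lambda I$; taking determinants with $\det A=1$ forces $\lambda=\pm1$. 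For $\lambda=1$ the matrix $A$ is real, while for $\lambda=-1$ we get $A=iR$ with $R$ real and $\det R=-1$. Since the scalar $i$ cancels under conjugation, $A=iR$ normalizes $\Gamma_H$ if and only if the real matrix $R$ does, which ties the complex problem to the real $GL_2^{\pm1}$ problem.

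I would then record the two distinguished extra elements: $w=\big(\begin{smallmatrix} i & 0 \\ 0 & -i\end{smallmatrix}\big)$ with $w^2=-I$, and $R_0=\big(\begin{smallmatrix} 1 & 0 \\ 0 & -1\end{smallmatrix}\big)$ with $R_0^2=I$ and $w=iR_0$; conjugation by either negates the off-diagonal entries and fixes the diagonal, hence preserves every $\Gamma_H$. Writing $N_{\mathbb{R}}$ for the normalizer of part $(i)$, the determinant-$(-1)$ normalizers in $GL_2^{\pm1}(\mathbb{R})$ form the single coset $R_0 N_{\mathbb{R}}$ (if $R$ is another such, then $R_0 R\in N_{\mathbb{R}}$), so this normalizer is the split semidirect product $N_{\mathbb{R}}\rtimes\langle R_0\rangle$, proving $(iii)$; the field statement follows since $R_0$ is rational and $N_{\mathbb{R}}$ is handled by $(i)$. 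Part $(ii)$ then follows from the lemma together with $(iii)$: the complex normalizer is $N_{\mathbb{R}}\sqcup w N_{\mathbb{R}}$, an extension of $N_{\mathbb{R}}$ by $\mathbb{Z}/2$ whose distinguished lift $w$ squares to $-I$ and whose induced action is the sign change $b\mapsto-b$, $c\mapsto-c$ in the presentation of Definition \ref{Gamma0*sNdef}. This extension does not split, because an order-$2$ lift $iR$ would force $R^2=-I$ for a real $R$ of determinant $-1$, which is impossible; by contrast $(iii)$ splits since $R_0^2=I$. As $i\in\mathbb{Q}(\sqrt{\mathbb{Q}})$ and the relevant $R$ is handled by $(iii)$, the normalizers over $\mathbb{C}$, $\overline{\mathbb{Q}}$, and $\mathbb{Q}(\sqrt{\mathbb{Q}})$ all agree.

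For part $(iv)$ I would transport this to the quadratic space $M_2(\mathbb{R})_0$ of Section \ref{Lat}. Conjugation identifies $SAut^+\big(L(N,D)\big)$ with $\Gamma_0^{*,\sigma}(N)$ by Theorem \ref{autlat}, and conjugation by $w$ (equivalently by $R_0$, as $i$ cancels) sends the basis to $E\mapsto-E$, $F\mapsto-F$, $H\mapsto H$; this preserves $L(N,D)$, has determinant $+1$ on $M_2(\mathbb{R})_0$, squares to the identity, and lies in $SO$ but not in $SO^+$. Since $SO/SO^+\cong\mathbb{Z}/2$ there is nothing further, so $SAut=SAut^+\rtimes\langle w\rangle$ with the action of part $(ii)$. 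The global inversion $-\mathrm{Id}_{L(N,D)}\colon X\mapsto-X$ is central in the orthogonal group, preserves the lattice, and has determinant $(-1)^3=-1$ on the three-dimensional space, so $O\big(M_2(\mathbb{R})_0\big)=SO\sqcup(-\mathrm{Id})SO$ and adjoining it doubles each group as a direct factor: $Aut^+=SAut^+\times\langle-\mathrm{Id}\rangle$ and $Aut=SAut\times\langle-\mathrm{Id}\rangle$, exhausting the four components. I expect the main obstacle to be precisely the component bookkeeping of $(iv)$: one must verify that $w$-conjugation lies in the nontrivial component of $SO$ and not merely that it has determinant $+1$ — equivalently, that it is realized by no $SL_2(\mathbb{R})$-conjugation, which holds because any $g$ with $g$-conjugation equal to $R_0$-conjugation would make $R_0^{-1}g$ scalar and hence $\det g<0$ — and that $w$ and $-\mathrm{Id}$ together account for all four cosets.
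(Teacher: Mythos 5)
Your proof is correct, and for the heart of the matter --- parts $(ii)$ and $(iii)$ --- it takes a genuinely different route from the paper. The paper handles $(ii)$ by re-tracing the proof of Proposition \ref{ambgrp} over $\mathbb{C}$ and locating the exact two places where reality of the entries was used (the case $g=0$ forcing $a=d=\pm1$, and the positivity of the integer $t$ in Equation \eqref{gehtpq}); relaxing each introduces precisely a factor of $\big(\begin{smallmatrix} i & 0 \\ 0 & -i\end{smallmatrix}\big)$, and part $(iii)$ is then obtained by substituting the real counterpart $\big(\begin{smallmatrix} 1 & 0 \\ 0 & -1\end{smallmatrix}\big)$. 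Your descent lemma --- that a complex normalizing $A$ preserves $M_{2}(\mathbb{R})=\mathrm{span}_{\mathbb{R}}\Gamma_{H}$ under conjugation, so $A^{-1}\bar{A}$ centralizes $M_{2}(\mathbb{C})$ and is a scalar $\pm1$, whence $A$ is real or $A=iR$ with $R$ real of determinant $-1$ --- replaces that re-examination entirely and derives $(ii)$ and $(iii)$ from a single coset computation with $R_{0}$. What your route buys is robustness and uniformity: one does not need to certify that every other step of the real proof survives complexification, and you get for free the observation (absent from the paper) that the $SL_{2}$ extension in $(ii)$ is non-split while $(iii)$ splits. What the paper's route buys is a direct exhibition of the entries as rational multiples of square roots of rationals, which makes the $\mathbb{Q}(\sqrt{\mathbb{Q}})$ statements immediate; you recover this equally well since $i\in\mathbb{Q}(\sqrt{\mathbb{Q}})$ and the real part is covered by $(i)$. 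Parts $(i)$ and $(iv)$ follow the paper's argument essentially verbatim; I will only note that your sign bookkeeping in $(iv)$ (conjugation by $R_{0}$ sends $E\mapsto-E$, $F\mapsto-F$, $H\mapsto H$, with the positive definite plane spanned by $E+F$ and $H$) is the correct one --- the paper's parenthetical example interchanges the roles of $E+F$ and $E-F$ --- and that your explicit verification that $R_{0}$-conjugation is not induced by any determinant-one real matrix is exactly the component check the paper leaves implicit.
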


\begin{proof}
Part $(i)$ follows directly from Proposition \ref{ambgrp} and the fact that the formula from Definition \ref{Gamma0*sNdef} involves only square roots of non-negative rational numbers. For part $(ii)$ we observe that one place in the proof of Proposition \ref{ambgrp} where we have used the fact that our matrix has real entries is when we said that if $g=0$ then $e=h=\pm1$. Allowing complex coefficients (or coefficients from any other algebraically closed field of characteristic 0), we obtain also the possibility where $a=-d=\pm i$ (with $i=\sqrt{-1}$), yielding just real matrices multiplied by $\big(\begin{smallmatrix} i & 0 \\ 0 & -i\end{smallmatrix}\big)$. Since we end up in the algebraic extension $\mathbb{Q}(\sqrt{\mathbb{Q}})$ of $\mathbb{Q}$, we may ignore questions of compatibility between real entries and entries of the algebraically closed field over which we are working. The only other place in that proof where reality was used is where the number $t$ was assumed to be positive. As allowing $t$ to be negative is the same as multiplying a matrix from $SL_{2}(\mathbb{R})$ by $\big(\begin{smallmatrix} i & 0 \\ 0 & -i\end{smallmatrix}\big)$ as well, we find that the group in question contains the normalizer from above as a subgroup of index 2, with which $\big(\begin{smallmatrix} i & 0 \\ 0 & -i\end{smallmatrix}\big)$ generates the full group. Since this matrix squares to the non-trivial central element $-I$ of the real normalizer, and conjugation by which simply inverts the signs of the off-diagonal entries, this proves part $(ii)$. Part $(iii)$ is easily deduced by replacing $\big(\begin{smallmatrix} i & 0 \\ 0 & -i\end{smallmatrix}\big)$ by its real counterpart $\big(\begin{smallmatrix} 1 & 0 \\ 0 & -1\end{smallmatrix}\big)$, which has determinant $-1$ and order 2, and conjugation by which yields the same operation as in part $(ii)$. For part $(iv)$ we recall that conjugation by matrices of determinant $-1$ yields the operation of elements of $SO\big(M_{2}(\mathbb{R})_{0}\big)$ that do not preserve the orientations on the definite parts (check, e.g., the action of the matrix from the proof of part $(iii)$). Hence the first assertion follows from part $(iii)$, and the second one is a consequence of the fact that $-Id_{L(N,D)}$ is central, has determinant $-1$, and preserves the orientation on the 2-dimensional positive definite part. This proves the proposition.
\end{proof}
As a final remark, we mention that part $(iii)$ of Proposition \ref{fielddef} only allows determinants $\pm1$, restricting further just to determinant 1 in part $(ii)$ there, since we are only interested in groups whose center consists just of $\{\pm I\}$. This is done in order to avoid a trivial increase of all the normalizers.

\noindent\textsc{Einstein Institute of Mathematics, the Hebrew University of Jerusalem, Edmund Safra Campus, Jerusalem 91904, Israel}

\noindent E-mail address: zemels@math.huji.ac.il

\end{document}